\definecolor{green_dark}{rgb}{0,0.6,0}
\newcommand{\N}{\mathbb N}
\newcommand{\R}{\mathbb R}
\newcommand{\C}{\mathbb C}
\newcommand{\nlsct}{\text{NLS}_c}
\newcommand{\nlsce}{\emph{NLS}_c}
\newcommand{\ep}{\epsilon}
\newcommand{\Gact} {\gamma_{\text{c}}}
\newcommand{\cbar}{\overline{c}}
\newcommand{\re}[1]{\mbox{Re} \ #1} 
\newcommand{\im}[1]{\mbox{Im} \ #1} 
\newcommand{\reemph}[1]{\mbox{\emph{Re}} \ #1} 
\newcommand{\imemph}[1]{\mbox{\emph{Im}} \ #1}
\newcommand{\scal}[1]{\left\langle #1 \right\rangle} 
\newcommand{\defendproof}{\hfill $\Box$} 
\newtheorem{theorem}{Theorem}[section]
\newtheorem{lem}[theorem]{Lemma} 
\newtheorem{prop}[theorem]{Proposition}
\newtheorem{coro}[theorem]{Corollary} 
\theoremstyle{definition}
\newtheorem{defi}[theorem]{Definition}
\newtheorem{rem}[theorem]{Remark}
\title[Global existence \& blowup NLS inverse-square potential]{Global existence and blowup for a class of the focusing nonlinear Schr\"odinger equation with inverse-square potential} 
\author[V. D. Dinh]{Van Duong Dinh}
\address[V. D. Dinh]{Institut de Math\'ematiques de Toulouse UMR5219, Universit\'e Toulouse CNRS, 31062 Toulouse Cedex 9, France and Department of Mathematics, HCMC University of Pedagogy, 280 An Duong Vuong, Ho Chi Minh, Vietnam}
\email{dinhvan.duong@math.univ-toulouse.fr}
\keywords{Nonlinear Schr\"odinger equation; Inverse-square potential; Global existence; Blowup; Virial identity; Gagliardo-Nirenberg inequality}
\subjclass[2010]{35A01, 35B44, 35Q55}
\begin{document}

\maketitle
\begin{abstract}
We consider a class of the focusing nonlinear Schr\"odinger equation with inverse-square potential
\[
i\partial_t u + \Delta u -c|x|^{-2}u = - |u|^\alpha u, \quad u(0)=u_0 \in H^1, \quad (t,x)\in \R \times \R^d,
\]
where $d\geq 3$, $\frac{4}{d}\leq \alpha \leq \frac{4}{d-2}$ and $c\ne 0$ satisfies $c>-\lambda(d):=-\left(\frac{d-2}{2}\right)^2$. In the mass-critical case $\alpha=\frac{4}{d}$, we prove the global existence and blowup below ground states for the equation with $d\geq 3$ and $c>-\lambda(d)$. In the mass and energy intercritical case $\frac{4}{d}<\alpha<\frac{4}{d-2}$, we prove the global existence and blowup below the ground state threshold for the equation. This extends similar results of \cite{KillipMurphyVisanZheng} and \cite{LuMiaoMurphy} to any dimensions $d\geq 3$ and a full range $c>-\lambda(d)$. We finally prove the blowup below ground states for the equation in the energy-critical case $\alpha=\frac{4}{d-2}$ with $d\geq 3$ and $c>-\frac{d^2+4d}{(d+2)^2} \lambda(d)$.
\end{abstract}


\section{Introduction}
\setcounter{equation}{0}
Consider the Cauchy problem for the focusing nonlinear Schr\"odinger equation with inverse-square potential
\[
\left\{
\begin{array}{ccl}
i\partial_t u - P_c u &=& - |u|^\alpha u, \quad (t,x)\in \R \times \R^d, \\
u(0)&=& u_0,
\end{array} 
\right. \tag{$\nlsct$}
\]
where $u: \R \times \R^d \rightarrow \C, u_0:\R^d \rightarrow \C, d\geq 3, \alpha>0$ and $P_c=-\Delta + c|x|^{-2}$ with $c\ne 0$ satisfies $c>-\lambda(d):=-\left(\frac{d-2}{2}\right)^2$. The case $c=0$ is the well-known nonlinear Schr\"odinger equation which has been studied extensively over the last three decades. The nonlinear Schr\"odinger equation with inverse-square potential $(\nlsct)$ appears in a variety of physical settings and is of interest in quantum mechanics (see e.g. \cite{KalfSchminckeWalterWust} and references therein). The study of the $(\nlsct)$ has attracted a lot of interest in the past several years (see e.g. \cite{BurqPlanchonStalkerTahvildar-Zadeh, KillipMurphyVisanZheng, KillipMiaoVisanZhangZheng-sobolev, KillipMiaoVisanZhangZheng-energy, LuMiaoMurphy, OkazawaSuzukiYokota-cauchy, OkazawaSuzukiYokota-energy, Suzuki-hartree, Suzuki-bounded, ZhangZheng}).\newline
\indent The operator $P_c$ is the self-adjoint extension of $-\Delta + c|x|^{-2}$. It is well-known that in the range $-\lambda(d)<c<1-\lambda(d)$, the extension is not unique (see e.g. \cite{KalfSchminckeWalterWust}). In this case, we do make a choice among possible extensions, such as Friedrichs extension. The restriction on $c$ comes from the sharp Hardy inequality, namely
\begin{align}
\lambda(d) \int |x|^{-2} |u(x)|^2 dx \leq \int|\nabla u(x)|^2 dx, \quad \forall u \in H^1, \label{sharp hardy inequality}
\end{align}
which ensures that $P_c$ is a positive operator. \newline
\indent Throughout this paper, we denote for $\gamma \in \R$ and $q\in [1,\infty]$ the usual homogeneous and inhomogeneous Sobolev spaces associated to the Laplacian $-\Delta$ by $\dot{W}^{\gamma,q}$ and $W^{\gamma,q}$ respectively. We also use $\dot{H}^\gamma := \dot{W}^{\gamma,2}$ and $H^\gamma:=W^{\gamma,2}$. Similarly, we define the homogeneous Sobolev space $\dot{W}^{\gamma,q}_c$ associated to $P_c$ by the closure of $C^\infty_0(\R^d \backslash\{0\})$ under the norm
\begin{align*}
\|u\|_{\dot{W}^{\gamma,q}_c}:= \|\sqrt{P_c}^\gamma u\|_{L^q}. 
\end{align*}
The inhomogeneous Sobolev space associated to $P_c$ is defined by the closure of $C^\infty_0(\R^d)$ under the norm
\[
\|u\|_{W^{\gamma, q}_c}:=\|\scal{P_c}^\gamma u\|_{L^q},
\]
where $\scal{\cdot}$ is the Japanese bracket. We abbreviate $\dot{H}^\gamma_c: = \dot{W}^{\gamma,2}_c$ and $H^\gamma_c:=W^{\gamma,2}_c$. Note that by definition, we have
\begin{align}
\|u\|^2_{\dot{H}^1_c}=  \int |\nabla u(x)|^2 + c |x|^{-2}|u(x)|^2 dx. \label{H^1 norm P_c}
\end{align}
By the sharp Hardy inequality, we see that for $c>-\lambda(d)$,
\[
\|u\|_{\dot{H}^1_c}\sim \|u\|_{\dot{H}^1}.
\]
\indent Before stating our results, let us recall some facts for the $(\nlsct)$.  We first note that the $(\nlsct)$ is invariant under the scaling,
\[
u_\lambda(t,x):= \lambda^{\frac{2}{\alpha}} u(\lambda^2 t, \lambda x), \quad \lambda>0.
\]
An easy computation shows
\[
\|u_\lambda(0)\|_{\dot{H}^\gamma} = \lambda^{\gamma+\frac{2}{\alpha}-\frac{d}{2}} \|u_0\|_{\dot{H}^\gamma}.
\]
Thus, the critical Sobolev exponent is given by 
\begin{align}
\Gact := \frac{d}{2}-\frac{2}{\alpha}. \label{critical exponent}
\end{align}
Moreover, the $(\nlsct)$ has the following conserved quantities:
\begin{align}
M(u(t))&:= \int |u(t,x)|^2 dx = M(u_0), \label{mass conservation} \\
E_c(u(t)) &:= \int \frac{1}{2}|\nabla u(t,x)|^2 + \frac{c}{2} |x|^{-2} |u(t,x)|^2- \frac{1}{\alpha+2} |u(t,x)|^{\alpha+2} dx = E_c(u_0). \label{energy conservation} 
\end{align}
It is convenient to introduce the following numbers:
\begin{align}
\renewcommand*{\arraystretch}{1.2}
\alpha_\star:=\frac{4}{d}, \quad \alpha^\star:=\left\{
\begin{array}{cl}
\frac{4}{d-2} & \text{if } d\geq 3, \\
\infty &\text{if } d=1, 2.
\end{array}
\right. \label{define alpha star}
\end{align}
\indent The main purpose of this paper is to study the global existence and blowup for the $(\nlsct)$ in the mass-critical (i.e. $\alpha=\alpha_\star$), intercritical (mass-supercritical and energy-subcritical, i.e. $\alpha_\star<\alpha<\alpha^\star$) and energy-critical (i.e. $\alpha=\alpha^\star$) cases. 
\subsection{Mass-critical case} Let us first recall known results for the focusing mass-critical nonlinear Schr\"odinger equation, i.e. $c=0$ and $\alpha=\alpha_\star$ in $(\nlsct)$. One has the following (see e.g. \cite[Chapter 6]{Cazenave} for more details):
\begin{theorem} \label{theorem global blowup NLS mass-critical}
Let $u_0 \in H^1$ and $u$ be the corresponding solution to the mass-critical \emph{(NLS$_0$)} (i.e. $c=0$ and $\alpha=\alpha_\star$ in $(\nlsce)$). 
\begin{itemize}
\item[1.] Global existence \cite{Weinstein}: If $d\geq 1$ and $\|u_0\|_{L^2}<\|Q_0\|_{L^2}$, where $Q_0$ is the unique positive radial solution to the elliptic equation
\[
\Delta Q_0 -Q_0 +Q_0^{\alpha_\star+1}=0,
\]
then the solution $u$ exists globally in time and $\sup_{t\in \R}\|u(t)\|_{\dot{H}^1} <\infty$. 
\item[2.] Blowup \cite{OgawaTsutsumi1, OgawaTsutsumi2}: The solution $u$ blows up in finite time if one of the following conditions holds true:
\begin{enumerate}
\item[$\bullet$] $d\geq 1, E_0(u_0)<0$ and $xu_0 \in L^2$, 
\item[$\bullet$] $d\geq 2, E_0(u_0)<0$ and $u_0$ is radial, 
\item[$\bullet$] $d=1$ and $E_0(u_0)<0$.
\end{enumerate} 
\end{itemize}
\end{theorem}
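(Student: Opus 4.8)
\emph{Proof proposal.} The two assertions are independent; both are classical and I would obtain them from conservation laws together with, respectively, a sharp functional inequality and virial identities.

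\emph{Part 1 (global existence).} The plan is to exploit the sharp Gagliardo--Nirenberg inequality in the mass-critical case: with $\alpha_\star=\tfrac4d$ one has, for every $f\in H^1$,
\[
\frac{1}{\alpha_\star+2}\|f\|_{L^{\alpha_\star+2}}^{\alpha_\star+2} \le \frac12\left(\frac{\|f\|_{L^2}}{\|Q_0\|_{L^2}}\right)^{\alpha_\star}\|\nabla f\|_{L^2}^2,
\]
the extremizer being $Q_0$; the identification of the best constant with (a factor times) $\|Q_0\|_{L^2}^{-\alpha_\star}$ follows from the Pohozaev identities associated to $\Delta Q_0-Q_0+Q_0^{\alpha_\star+1}=0$. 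Inserting $f=u(t)$ into the energy and using mass conservation gives
\[
E_0(u_0)=E_0(u(t))\ \ge\ \frac12\,\|\nabla u(t)\|_{L^2}^2\Big(1-\big(\|u_0\|_{L^2}/\|Q_0\|_{L^2}\big)^{\alpha_\star}\Big).
\]
Under the hypothesis $\|u_0\|_{L^2}<\|Q_0\|_{L^2}$ the bracket is a fixed positive number, so $\|\nabla u(t)\|_{L^2}$ is bounded uniformly in $t$; combined with mass conservation this gives a uniform $H^1$ bound on the maximal interval, and the blow-up alternative furnished by the local well-posedness theory forces the solution to be global with $\sup_{t\in\R}\|u(t)\|_{\dot H^1}<\infty$.

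\emph{Part 2 (blow-up).} For $xu_0\in L^2$ (any $d\ge1$), I would use the variance $V(t):=\int|x|^2|u(t,x)|^2\,dx$, which is $C^2$ and satisfies the virial identity
\[
V''(t)=16\,E_0(u(t))-\frac{4(d\alpha_\star-4)}{\alpha_\star+2}\,\|u(t)\|_{L^{\alpha_\star+2}}^{\alpha_\star+2}.
\]
Since $d\alpha_\star-4=0$, this reduces to $V''(t)\equiv 16E_0(u_0)$, hence $V(t)=8E_0(u_0)\,t^2+V'(0)\,t+V(0)$; if $E_0(u_0)<0$ this quadratic is eventually negative, contradicting $V\ge0$, so the maximal existence time is finite. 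In the radial case with $d\ge2$, replace $|x|^2$ by $R^2\varphi(x/R)$ where $\varphi$ is smooth, equals $r^2$ near the origin, is bounded with bounded derivatives and $\varphi''\le2$; the localized virial $V_R$ then satisfies $V_R''(t)\le 16E_0(u_0)+\mathrm{error}(R)$, the error being supported in $|x|\gtrsim R$ and estimated, via the radial Strauss inequality $\|u\|_{L^\infty(|x|\ge R)}\lesssim R^{-(d-1)/2}\|u\|_{\dot H^1}^{1/2}\|u\|_{L^2}^{1/2}$, by a quantity involving $\|\nabla u(t)\|_{L^2}$ to a power strictly less than $2$ (using $\alpha_\star<\tfrac{4}{d-2}$ for $d\ge2$); a standard ODE/bootstrap argument assuming global existence then produces $V_R(t)<0$ for large $t$, a contradiction. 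The case $d=1$ (where $\alpha_\star=4$ is $H^1$-subcritical) is handled similarly with a modified/truncated variance and no symmetry assumption, following Ogawa--Tsutsumi.

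\emph{Main obstacle.} The delicate point is entirely in Part 2: passing from "$V''$ has the right sign" to a genuine contradiction \emph{without} an a priori bound on $\|\nabla u(t)\|_{L^2}$. In the finite-variance case the identity is exact and this is immediate, but in the radial and one-dimensional cases one must run a self-improving ODE argument to absorb the localization errors and still force the (modified) variance to become negative in finite time.
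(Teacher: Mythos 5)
Your Part 1 and the finite-variance blowup case are correct and follow exactly the route the paper intends: for this theorem the paper only cites Weinstein and Ogawa--Tsutsumi, but it reproduces precisely this strategy for the $c\neq 0$ analogue in Sections \ref{section global existence} and \ref{section blowup} (sharp Gagliardo--Nirenberg with the Pohozaev identification of the constant, plus mass/energy conservation, for the uniform $\dot H^1$ bound; Glassey's convexity argument from $V''(t)=16E_0(u_0)<0$ for $xu_0\in L^2$).

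The gap is in the radial case $d\ge 2$. You propose to bound the localization error by $R^{-(d-1)\alpha_\star/2}\|\nabla u(t)\|_{L^2}^{\alpha_\star/2}$ via the Strauss inequality and then close with ``a standard ODE/bootstrap argument''. In the mass-critical case this does not close as stated: Young's inequality turns that error into $\ep\|\nabla u(t)\|_{L^2}^2+C_\ep R^{-2}$, and since $d\alpha_\star-4=0$ the main term of the virial identity is the bare constant $16E_0(u_0)$ --- there is no leftover $-2(d\alpha-4)\|\nabla u(t)\|^2_{L^2}$ (as in the intercritical case) into which $\ep\|\nabla u(t)\|_{L^2}^2$ can be absorbed, and no a priori bound on $\|\nabla u(t)\|_{L^2}$ is available, since unboundedness of that quantity is exactly what you are trying to prove. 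The actual mechanism in Ogawa--Tsutsumi, and in the paper's Lemma \ref{lem localized virial identity mass-critical} together with Section \ref{section blowup}, is different: one retains the negative localization term $-4\int_{|x|>R}(2-\varphi_R'')|\partial_r u|^2\,dx$, estimates the potential tail by $\ep\int_{|x|>R}(2d-\Delta\varphi_R)^{d/2}|\nabla u|^2\,dx+O(\ep^{-2/(d-2)}R^{-2})$ by applying the radial Sobolev inequality to $\chi_{2,R}^{d/4}u$ rather than to $u$ itself, and then constructs the weight (the cubic-corrected $\vartheta$) so that $2-\varphi_R''-\frac{\ep}{d+2}\,(2d-\Delta\varphi_R)^{d/2}\ge 0$ pointwise for $r>R$. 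It is this sign condition on the weight, not a bootstrap on $\|\nabla u\|_{L^2}$, that makes the argument close; without it your sketch stalls exactly at the point you flag as the main obstacle. (A similar caveat applies to $d=1$, where the Ogawa--Tsutsumi argument must additionally track the flow of mass into the region $|x|>R$.)
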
 
\begin{rem} \label{rem global blowup NLS mass-critical}
\begin{itemize}
\item[1.] By the sharp Gagliardo-Nirenberg inequality, the condition $\|u_0\|_{L^2} <\|Q_0\|_{L^2}$ implies $E_0(u_0)>0$. 
\item[2.] The condition $\|u_0\|_{L^2} <\|Q_0\|_{L^2}$ is sharp for the global existence in the sense that for any $M_0>\|Q_0\|_{L^2}$ (even for $M_0=\|Q_0\|_{L^2}$, see Item 4 below), there exists $u_0 \in H^1$ satisfying $\|u_0\|_{L^2} = M_0$ and the corresponding solution $u$ blows up in finite time.
\item[3.] The assumption $E_0(u_0)<0$ is a sufficient condition for finite time blowup but it is not necessary. One can show that for any $E_0>0$, there exists $u_0 \in H^1$ satisfying $E_0(u_0)=E_0$ and the corresponding solution blows up in finite time.
\item[4.] It is well-known (see e.g. \cite{Weinstein86} or \cite[Remark 6.7.3]{Cazenave}) that there exists a blowup solution to the mass-critical (NLS$_0$) with $\|u_0\|_{L^2} = \|Q_0\|_{L^2}$ by using the speudo-conformal transformation. 
\item[5.] Note also that Merle in \cite{Merle} proved the following classification of miminal mass blowup solutions for the mass-critical (NLS$_0$): Let $u_0 \in H^1$ be such that $\|u_0\|_{L^2}=\|Q_0\|_{L^2}$. If the corresponding solution blows up in finite time $0<T<+\infty$, then up to symmetries of the equation, $u(t,x)= S(t-T,x)$, where
\begin{align}
S(t,x):= \frac{1}{|t|^{\frac{d}{2}}} e^{-i\frac{|x|^2}{4t} +\frac{i}{t}} Q\left(\frac{x}{t}\right). \label{minimal mass blowup}
\end{align}
\end{itemize}
\end{rem}
Now let us consider $c\ne 0$ satisfy $c>-\lambda(d)$. Let $C_{\text{GN}}(c)$ be the sharp constant to the Gagliardo-Nirenberg inequality associated to the mass-critical $(\nlsct)$, namely,
\[
C_{\text{GN}}(c):=\sup\Big\{\|f\|^{\alpha_\star+2}_{L^{\alpha_\star+2}} \div \Big[\|f\|^{\alpha_\star}_{L^2} \|f\|^2_{\dot{H}^1_c} \Big] \ \Big| \ f \in H^1_c \backslash\{0\}  \Big\}.
\]
We will see in Theorem $\ref{theorem gagliardo nirenberg inequality}$ that:
\begin{itemize}
\item[1.] When $-\lambda(d)<c<0$, the sharp constant $C_{\text{GN}}(c)$ is attained by a non-negative radial solution to the elliptic equation
\begin{align*}
-P_c Q_c -Q_c + Q_c^{\alpha_\star+1}=0.
\end{align*}
\item[2.] When $c>0$, $C_{\text{GN}}(c)=C_{\text{GN}}(0)$, where $C_{\text{GN}}(0)$ is the sharp constant to the standard Gagliardo-Nirenberg inequality
\[
\|f\|^{\alpha_\star+2}_{L^{\alpha_\star+2}} \leq C_{\text{GN}}(0) \|f\|^{\alpha_\star}_{L^2} \|f\|^2_{\dot{H}^1}.
\]
However, $C_{\text{GN}}(c)$ is never attained. Moreover, if we restrict attention to the Gagliardo-Nirenberg inequality for radial functions, then the sharp constant for the radial Gagliardo-Nirenberg inequality associated to the mass-critical $(\nlsct)$, namely,
\[
C_{\text{GN}}(c, \text{rad}):= \sup\left\{\|f\|^{\alpha_\star +2}_{L^{\alpha_\star+2}} \div \Big[ \|f\|^{\alpha_\star}_{L^2} \|f\|^2_{\dot{H}^1_c}\Big] \ \Big| \ f \in H^1_c \backslash \{0\}, f \text{ radial}\right\}
\]
is attended by a radial solution $Q_{c,\text{rad}}$ to the elliptic equation
\[
-P_c Q_{c,\text{rad}} -Q_{c,\text{rad}} + Q_{c,\text{rad}}^{\alpha_\star+1}=0.
\]
Since $C_{\text{GN}}(c)$ is never attained, the constant $C_{\text{GN}}(c,\text{rad})$ is strictly smaller than $C_{\text{GN}}(c)$.
\end{itemize}
We will also see in Remark $\ref{rem ground state equation}$ that for $c>-\lambda(d)$,
\begin{align}
C_{\text{GN}}(c)= \frac{\alpha_\star+2}{2\|Q_{\cbar}\|_{L^2}^{\alpha_\star}}, \label{sharp constant mass-critical}
\end{align}
where $\cbar:= \min\{c,0\}$. Moreover, for $c>0$,
\begin{align}
C_{\text{GN}}(c,\text{rad})= \frac{\alpha_\star+2}{2\|Q_{c,\text{rad}}\|_{L^2}^{\alpha_\star}}, \label{sharp constant mass-critical radial}
\end{align}
Our first result is the following global existence and blowup for the mass-critical $(\nlsct)$.
\begin{theorem} \label{theorem global blowup NLS mass-critical inverse square}
Let $d\geq 3$ and $c\ne 0$ be such that $c>-\lambda(d)$. Let $u_0 \in H^1$ and $u$ be the corresponding solution to the mass-critical $(\nlsce)$ (i.e. $\alpha=\alpha_\star$).
\begin{itemize}
\item[1.] If $\|u_0\|_{L^2}<\|Q_{\cbar}\|_{L^2}$, then the solution $u$ exists globally and $\sup_{t\in \R} \|u(t)\|_{\dot{H}^1_c}<\infty$. 
\item[2.] If $E_c(u_0)<0$ and either $x u_0 \in L^2$ or $u_0$ is radial, then the solution $u$ blows up in finite time.
\end{itemize}
\end{theorem}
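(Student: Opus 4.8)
The argument splits according to the two claims: the first (global existence) is a soft consequence of the sharp Gagliardo--Nirenberg inequality together with the conservation laws \eqref{mass conservation}--\eqref{energy conservation}, while the second (blowup) rests on the virial identity for $(\nlsct)$ and on the fact that the inverse-square potential, being homogeneous of degree $-2$, does not spoil the mass-critical virial cancellation. For the global existence, let $u$ be the maximal-lifespan solution and apply the sharp Gagliardo--Nirenberg inequality of Theorem~\ref{theorem gagliardo nirenberg inequality} to $u(t)$; using mass conservation this gives $\|u(t)\|_{L^{\alpha_\star+2}}^{\alpha_\star+2}\le C_{\text{GN}}(c)\|u_0\|_{L^2}^{\alpha_\star}\|u(t)\|_{\dot{H}^1_c}^2$, and inserting it into the conserved energy, using \eqref{sharp constant mass-critical} to write $\tfrac{2C_{\text{GN}}(c)}{\alpha_\star+2}=\|Q_{\cbar}\|_{L^2}^{-\alpha_\star}$, yields
\[
E_c(u_0)=E_c(u(t))\ \ge\ \tfrac12\|u(t)\|_{\dot{H}^1_c}^2\Big(1-\big(\|u_0\|_{L^2}/\|Q_{\cbar}\|_{L^2}\big)^{\alpha_\star}\Big).
\]
Since $\|u_0\|_{L^2}<\|Q_{\cbar}\|_{L^2}$ the bracketed factor is a fixed positive number, so $\sup_t\|u(t)\|_{\dot{H}^1_c}^2<\infty$; by \eqref{H^1 norm P_c}, the equivalence $\|\cdot\|_{\dot{H}^1_c}\sim\|\cdot\|_{\dot{H}^1}$ for $c>-\lambda(d)$ and mass conservation, this is a uniform $H^1$ bound, whence the blowup alternative of the local $H^1$ theory forces the solution to exist globally.

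For the blowup, suppose first $xu_0\in L^2$. After the usual regularization (establish the identity for smooth, decaying data and pass to the limit, the quantity $\||x|u(t)\|_{L^2}$ staying finite on compact time intervals by a Gronwall estimate), the variance $V(t):=\int|x|^2|u(t,x)|^2\,dx$ is $C^2$ and satisfies
\[
V''(t)=8\|\nabla u(t)\|_{L^2}^2+8c\int|x|^{-2}|u(t,x)|^2\,dx-\frac{4d\alpha_\star}{\alpha_\star+2}\|u(t)\|_{L^{\alpha_\star+2}}^{\alpha_\star+2}.
\]
The crucial point is that the potential contributes exactly the term $8c\int|x|^{-2}|u|^2\,dx$ to the virial, because $x\cdot\nabla(|x|^{-2})=-2|x|^{-2}$, so it merges with $8\|\nabla u\|_{L^2}^2$ into $8\|u(t)\|_{\dot{H}^1_c}^2$; using the conserved energy to remove the nonlinear term gives $V''(t)=(8-2d\alpha_\star)\|u(t)\|_{\dot{H}^1_c}^2+4d\alpha_\star E_c(u_0)$, and at $\alpha_\star=\tfrac4d$ the first coefficient vanishes, leaving $V''(t)\equiv 16E_c(u_0)<0$. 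Hence $V$ is a downward parabola in $t$ and must become negative in finite time, contradicting $V\ge 0$; so the solution blows up in finite time.

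When instead $u_0$ is radial, I would replace $|x|^2$ by a radial cutoff $\varphi_R(x)=R^2\varphi(x/R)$ with $\varphi(r)=r^2$ for $r\le1$, $\varphi$ constant for $r\ge2$, $\varphi''\le 2$, and set $V_R(t):=\int\varphi_R|u(t)|^2\,dx$. For radial $u$ the Hessian term reduces to $4\int\varphi_R''|\nabla u|^2\le 8\|\nabla u\|_{L^2}^2$, and since $\varphi_R$ coincides with $|x|^2$ on $\{|x|\le R\}$ all the corrections are supported in $\{|x|\ge R\}$ (the potential correction being $O(R^{-2})$, using $|\nabla(|x|^{-2})|\lesssim|x|^{-3}$), so that
\[
V_R''(t)\ \le\ 16E_c(u_0)+C\Big(R^{-2}\|u_0\|_{L^2}^2+\int_{|x|\ge R}|u(t,x)|^{\alpha_\star+2}\,dx\Big).
\]
The nonlinear tail is estimated by the radial Sobolev (Strauss) inequality $\|u\|_{L^\infty(|x|\ge R)}^2\lesssim R^{-(d-1)}\|u\|_{L^2(|x|\ge R)}\|\nabla u\|_{L^2(|x|\ge R)}$, giving $\int_{|x|\ge R}|u(t)|^{\alpha_\star+2}\lesssim R^{-2(d-1)/d}\|u_0\|_{L^2}^{2+2/d}\|\nabla u(t)\|_{L^2}^{2/d}$; since the gradient enters with the subcritical power $2/d<2$ and an $R$-decaying coefficient, Young's inequality together with a suitable choice of $R$ turns this into $V_R''(t)\le 8E_c(u_0)+\ep\|\nabla u(t)\|_{L^2}^2$ on the interval of existence, and a continuity argument exploiting $V_R\ge 0$ (the Ogawa--Tsutsumi scheme) then produces a finite-time singularity as before.

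I expect the radial case to be the main obstacle. Unlike the $xu_0\in L^2$ case, the localized virial inequality is not a closed-form ODE for $V_R$, so it has to be combined with the conservation laws and a bootstrap to rule out global existence; moreover, pushing the radial Sobolev estimate through precisely in the mass-critical regime, where the tail of the nonlinearity is scale-critical, together with the rigorous justification of the (localized) virial identities for merely $H^1_c$ solutions, is where the real work lies. The global-existence statement and the weighted blowup, by contrast, are routine once the value \eqref{sharp constant mass-critical} of the sharp Gagliardo--Nirenberg constant is available.
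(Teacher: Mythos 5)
Your Part 1 (global existence) and your treatment of the blowup case $xu_0\in L^2$ are correct and coincide with the paper's proof: the sharp Gagliardo--Nirenberg inequality with the constant $(\ref{sharp constant mass-critical})$ plus the conservation laws give the uniform $\dot{H}^1_c$ bound, and the virial identity $\frac{d^2}{dt^2}\|xu(t)\|^2_{L^2}=16E_c(u_0)<0$ with Glassey's convexity argument gives blowup for finite-variance data.

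The radial case, however, contains a genuine gap, precisely at the step you flag as ``where the real work lies.'' By bounding the Hessian term at the outset by $4\int\varphi_R''|\nabla u|^2\leq 8\|\nabla u\|^2_{L^2}$ you discard the negative localized quantity $-4\int_{|x|>R}(2-\varphi_R'')|\nabla u|^2\,dx$, and after Strauss plus Young you land on
\[
V_R''(t)\leq 8E_c(u_0)+\ep\|\nabla u(t)\|^2_{L^2}.
\]
In the mass-critical case this cannot be closed: trading $\|u\|^{\alpha_\star+2}_{L^{\alpha_\star+2}}$ for $\|u\|^2_{\dot{H}^1_c}$ via the conserved energy leaves the gradient coefficient $8-2d\alpha_\star=0$, so --- unlike in the intercritical case --- there is no leftover negative multiple of $\|u(t)\|^2_{\dot{H}^1_c}$ into which the uncompensated $+\ep\|\nabla u(t)\|^2_{L^2}$ can be absorbed, and no continuity or convexity argument extracts a contradiction from the displayed inequality together with $V_R\geq 0$ (a solution whose gradient grows fast enough is consistent with both). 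The paper's proof (Lemma $\ref{lem localized virial identity mass-critical}$ and Subsection 7.1) keeps the localized negative term and arrives at
\[
V_{\varphi_R}''(t)\leq 16E_c(u_0)-4\int_{|x|>R}\Big(\chi_{1,R}-\tfrac{\ep}{d+2}\chi_{2,R}^{\frac{d}{2}}\Big)|\nabla u(t)|^2\,dx+O\Big(R^{-2}+\ep^{-\frac{2}{d-2}}R^{-2}\Big),
\]
with $\chi_{1,R}=2-\varphi_R''$ and $\chi_{2,R}=2d-\Delta\varphi_R$: the radial Sobolev inequality is applied to the weighted function $\chi_{2,R}^{d/4}u$, so the resulting gradient error is localized to $|x|>R$ and carries the weight $\chi_{2,R}^{d/2}$, and it is then dominated \emph{pointwise} by $\chi_{1,R}|\nabla u|^2$. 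This requires a specially constructed cutoff --- Ogawa--Tsutsumi's $\vartheta(r)=2[r-(r-1)^3]$ on $1<r\leq 1+1/\sqrt{3}$, decreasing thereafter --- for which $\chi_{1,R}\sim 6(r/R-1)^2$ while $\chi_{2,R}^{d/2}\sim (r/R-1)^d$ near $r=R$, so that $\chi_{1,R}\geq\frac{\ep}{d+2}\chi_{2,R}^{d/2}$ holds for $r>R$ once $\ep$ is small. That pointwise domination by a higher-order vanishing weight is the missing idea; a generic cutoff with $\varphi''\leq 2$ does not satisfy it, and without it the radial mass-critical blowup does not follow from your sketch.
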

\begin{rem}\label{rem global blowup NLS mass-critical inverse square}
\begin{itemize}
\item[1.] In \cite{CsoboGenoud}, Csobo-Genoud proved the global existence for the mass-critical $(\nlsct)$ with $-\lambda(d)<c<0$ under the assumption $\|u_0\|_{L^2}<\|Q_c\|_{L^2}$. Here we extend their result to any $c \ne 0$ and $c>-\lambda(d)$.
\item[2.] By the sharp Gagliardo-Nirenberg inequality associated to $(\nlsct)$, we see that the condition $\|u_0\|_{L^2}<\|Q_{\cbar}\|_{L^2}$ implies that $E_c(u_0)>0$. Indeed, applying the sharp Gagliardo-Nirenberg inequality and $(\ref{sharp constant mass-critical})$,
\begin{align*}
E_c(u_0) &= \frac{1}{2}\|u_0\|^2_{\dot{H}^1_c} -\frac{1}{\alpha_\star+2} \|u_0\|^{\alpha_\star+2}_{L^{\alpha_\star+2}} \\
&\geq \frac{1}{2}\|u_0\|^2_{\dot{H}^1_c} -\frac{1}{\alpha_\star+2} C_{\text{GN}}(c) \|u_0\|_{L^2}^{\alpha_\star} \|u_0\|^2_{\dot{H}^1_c} \\
&\geq \frac{1}{2}\|u_0\|^2_{\dot{H}^1_c} \Big[1-\Big(\frac{\|u_0\|_{L^2}}{\|Q_{\cbar}\|_{L^2}}\Big)^{\alpha_\star} \Big]>0.
\end{align*}
\item[3.] When $-\lambda(d)<c<0$, the condition $\|u_0\|_{L^2}<\|Q_{\cbar}\|_{L^2}=\|Q_c\|_{L^2}$ is sharp for the global existence. In fact, for any $M_c>\|Q_c\|_{L^2}$ (even for $M_c=\|Q_c\|_{L^2}$, see Item 5 below), we can show (see Remark $\ref{rem proof global NLS mass-critical inverse square}$) that there exists $u_0 \in H^1$ satisfying $\|u_0\|_{L^2} = M_c$ and the corresponding solution $u$ to the mass-critical $(\nlsct)$ blows up in finite time. When $c>0$, the condition $\|u_0\|_{L^2}<\|Q_0\|_{L^2}$ is not sharp. Indeed, if $u_0$ is radial and satisfies $\|u_0\|_{L^2}<\|Q_{c,\text{rad}}\|_{L^2}$, then the corresponding solution exists globally. Note that $\|Q_{c,\text{rad}}\|_{L^2}>\|Q_0\|_{L^2}$. Moreover, for any $M_c>\|Q_{c,\text{rad}}\|_{L^2}$ (even for $M_c=\|Q_{c,\text{rad}}\|_{L^2}$, see again Item 5 below), we can show (see again Remark $\ref{rem proof global NLS mass-critical inverse square}$) that there exists $u_0\in H^1$ radial satisfying $\|u_0\|_{L^2}=M_c$ and the corresponding solution blows up in finite time.
\item[4.] The condition $E_c(u_0)<0$ is a sufficient condition for finite time blowup, but it is not necessary. We will see in Remark $\ref{rem necessary condition blowup mass-critical inverse square}$ that for any $E_c>0$, there exists $u_0 \in H^1$ satisfying $E_c(u_0)=E_c$ and the corresponding solution blows up in finite time.
\item[5.] Recently, Csobo-Genoud in \cite[Lemma 1]{CsoboGenoud} made use of the speudo-conformal transformation to show that for $-\lambda(d)<c<0$, there exists a blowup solution to the mass-critical $(\nlsct)$ with $\|u_0\|_{L^2} = \|Q_c\|_{L^2}$. By a similar argument, we can show (see Remark $\ref{rem existence blowup solution mass-critical radial}$) that for $c>0$, there exists a radial blowup solution to the mass-critical $(\nlsct)$ with $\|u_0\|_{L^2}=\|Q_{c,\text{rad}}\|_{L^2}$.
\item[6.] In \cite{CsoboGenoud}, they also proved the classification of miminal mass blowup solutions for the mass-critical $(\nlsct)$ with $-\lambda(d)<c<0$. Their result is as follows: Let $u_0 \in H^1$ be such that $\|u_0\|_{L^2}=\|Q_c\|_{L^2}$. If the corresponding solution blows up in finite time $0<T<+\infty$, then up to symmetries of the equation \footnote{The $(\nlsct)$ does not enjoy the space translation invariance and the Galilean invariance.}, $u(t,x)= S(t-T,x)$, where $S$ is as in $(\ref{minimal mass blowup})$. We expect that a similar result should hold for radial blowup solutions with $c>0$. 
\end{itemize}
\end{rem}
\subsection{Intercritical case}
We next consider the intercritical (i.e. mass-supercritical and energy-subcritical) case. Let us recall known results for the focusing intercritical nonlinear Schr\"odinger equation, i.e. $c=0$ and $\alpha_\star<\alpha<\alpha^\star$ in $(\nlsct)$. The global existence, scattering and blowup were studied in \cite{HolmerRoudenko, DuyckaertsHolmerRoudenko, FangXieCazenave}. In order to state these results, let us define the following quantities:
\[
H(0):=E_0(Q_0) M(Q_0)^\sigma, \quad K(0):= \|Q_0\|_{\dot{H}^1} \|Q_0\|^\sigma_{L^2},
\]
where 
\begin{align}
\sigma:=\frac{1-\Gact}{\Gact}=\frac{4-(d-2)\alpha}{d\alpha-4}. \label{define sigma}
\end{align}
and $Q_0$ is the unique positive radial solution to the elliptic equation
\[
\Delta Q_0 -Q_0 + Q_0^{\alpha+1}=0.
\]
\begin{theorem}[\cite{HolmerRoudenko, DuyckaertsHolmerRoudenko, FangXieCazenave}] \label{theorem global existence and scattering c=0}
Let $d\geq 1, u_0 \in H^1$ and $u$ be the corresponding solution to the intercritical \emph{(NLS$_0$)} (i.e. $c=0$ and $\alpha_\star<\alpha<\alpha^\star$ in $(\nlsce)$). Suppose that $E_0(u_0) M(u_0)^\sigma< H(0)$.
\begin{itemize}
\item[1.] If $\|u_0\|_{\dot{H}^1} \|u_0\|^\sigma_{L^2} < K(0)$, then the solution $u$ exists globally in time and 
\[
\|u(t)\|_{\dot{H}^1} \|u(t)\|^\sigma_{L^2} < K(0),
\]
for any $t\in \R$. Moreover, the solution $u$ scatters in $H^1$.
\item[2.] If $\|u_0\|_{\dot{H}^1} \|u_0\|^\sigma_{L^2} > K(0)$ and either 
\begin{itemize}
\item[$\bullet$]  $xu_0 \in L^2$,
\item[$\bullet$] or $d\geq 3$, $u_0$ is radial,
\item[$\bullet$] or $d=2$, $u_0$ is radial and $\alpha_\star < \alpha \leq 4$, 
\end{itemize}
then the solution $u$ blows up in finite time and
\[
\|u(t)\|_{\dot{H}^1} \|u(t)\|^\sigma_{L^2} > K(0),
\]
for any $t$ in the existence time. Moreover, the finite time blowup still holds true if in place of $E_0(u_0) M(u_0)^\sigma<H(0)$ and $\|u_0\|_{\dot{H}^1} \|u_0\|^\sigma_{L^2}>K(0)$, we assume that $E(u_0)<0$. 
\end{itemize}
\end{theorem}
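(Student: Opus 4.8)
The plan is to reduce everything to three ingredients: a variational description of the threshold numbers $H(0),K(0)$ via the sharp Gagliardo--Nirenberg inequality, a continuity (``trapping'') argument based on the conservation of mass and energy, and a (localized) virial identity. Throughout write $\mathcal E(u):=E_0(u)M(u)^\sigma$ and $\mathcal K(u):=\|\nabla u\|_{L^2}\|u\|_{L^2}^\sigma$. Starting from $\|f\|_{L^{\alpha+2}}^{\alpha+2}\le C_{\text{GN}}(0)\|\nabla f\|_{L^2}^{\frac{d\alpha}{2}}\|f\|_{L^2}^{\alpha+2-\frac{d\alpha}{2}}$ and multiplying the energy identity by $M(u)^\sigma$, the exponent $\sigma=\frac{4-(d-2)\alpha}{d\alpha-4}$ is exactly the one that cancels all mass factors and gives $\mathcal E(u)\ge g(\mathcal K(u))$ with $g(y)=\tfrac12 y^2-\tfrac{C_{\text{GN}}(0)}{\alpha+2}y^{\frac{d\alpha}{2}}$. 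Since $\alpha>\alpha_\star$ forces $\tfrac{d\alpha}{2}>2$, the function $g$ rises from $0$, attains a unique maximum, then decreases; the Pohozaev identities for $Q_0$ identify the maximizer and maximal value as $K(0)$ and $H(0)$. Consequently $\{\mathcal E<H(0)\}$ is the disjoint union of the flow-invariant sets $\{\mathcal K<K(0)\}$ and $\{\mathcal K>K(0)\}$.

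For the global existence part of item 1, assume $\mathcal E(u_0)<H(0)$ and $\mathcal K(u_0)<K(0)$. On the maximal interval $\mathcal E(u(t))\equiv\mathcal E(u_0)<H(0)$, so by the above dichotomy the continuous map $t\mapsto\mathcal K(u(t))$ can never equal $K(0)$ and hence stays below it. In particular $\sup_t\|\nabla u(t)\|_{L^2}<\infty$, and since $\alpha<\alpha^\star$ the standard $H^1$ local theory and its blowup alternative yield a global solution obeying the asserted bound for all $t$.

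The scattering half of item 1 is the genuinely difficult part, and is where I would expect the main obstacle to lie. One follows the Kenig--Merle concentration--compactness/rigidity road map, as carried out in the quoted references: first a small-data scattering theory and a long-time perturbation (stability) lemma in suitable Strichartz spaces; then, if scattering failed somewhere in $\{\mathcal E<H(0),\ \mathcal K<K(0)\}$, a profile decomposition adapted to the free Schr\"odinger evolution produces a minimal, ``critical'' solution --- global, saturating the same bounds, with orbit precompact in $H^1$ modulo the symmetries of (NLS$_0$) --- which is finally ruled out by a localized virial/Morawetz estimate. The extraction and rigidity of this critical element are essentially the whole technical content of \cite{HolmerRoudenko, DuyckaertsHolmerRoudenko, FangXieCazenave}, which is why the theorem is quoted rather than reproved.

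For blowup (item 2), when $xu_0\in L^2$ set $V(t):=\int|x|^2|u(t,x)|^2\,dx$; the virial identity gives $V''(t)=(8-2d\alpha)\|\nabla u(t)\|_{L^2}^2+4d\alpha\,E_0(u_0)$. If $E_0(u_0)<0$, then since $\alpha>\alpha_\star$ makes $8-2d\alpha<0$ we get $V''(t)\le 4d\alpha\,E_0(u_0)<0$, so $V$ would reach $0$ in finite time --- impossible --- hence finite-time blowup. If instead $\mathcal E(u_0)<H(0)$ and $\mathcal K(u_0)>K(0)$, the trapping argument runs on the component $\{\mathcal K>K(0)\}$, and feeding the resulting lower bound for $\|\nabla u(t)\|_{L^2}^2$ back into $V''$ gives $V''(t)\le-\delta<0$ for a fixed $\delta>0$, again forcing finite-time blowup. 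For radial data of infinite variance one replaces $|x|^2$ by $R^2\varphi(x/R)$ with $\varphi$ quadratic near the origin and bounded at infinity; the extra terms in the localized virial are supported in $\{|x|\gtrsim R\}$ and are absorbed via the radial Sobolev (Strauss) inequality together with the uniform kinetic control just obtained, so choosing $R$ large restores a strictly negative upper bound for $V_R''$. The two-dimensional radial case with $\alpha\le4$ requires the sharper radial estimates of the cited works.
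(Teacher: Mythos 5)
Your proposal is correct and follows essentially the same route the paper itself takes for the analogous $c\neq 0$ statements in Sections 4--7: the sharp Gagliardo--Nirenberg inequality combined with the Pohozaev identities to identify $K(0)$ and $H(0)$ as the critical point and critical value of $g(y)=\tfrac12 y^2-\tfrac{C_{\text{GN}}(0)}{\alpha+2}y^{d\alpha/2}$, conservation laws plus continuity for the trapping dichotomy, and Glassey/localized virial arguments for blowup. Note that the paper does not prove this theorem at all but merely quotes it from \cite{HolmerRoudenko, DuyckaertsHolmerRoudenko, FangXieCazenave}, and you correctly identify the scattering claim as the one ingredient (Kenig--Merle concentration--compactness and rigidity) that genuinely lives in those references rather than in the elementary part of the argument.
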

Now let $c \ne 0$ be such that $c>-\lambda(d)$, and let $C_{\text{GN}}(c)$ be the sharp constant in the Gagliardo-Nirenberg inequality associated to the intercritical $(\nlsct)$, namely,
\[
C_{\text{GN}}(c):=\sup\Big\{ \|f\|^{\alpha+2}_{L^{\alpha+2}} \div \Big[\|f\|^{\frac{4-(d-2)\alpha}{2}} \|f\|^{\frac{d\alpha}{2}}_{\dot{H}^1_c} \Big] \ \Big| \ f \in H^1_c \backslash \{0\} \Big\}.
\]
We will see in Theorem $\ref{theorem gagliardo nirenberg inequality}$ that:
\begin{itemize}
\item[1.] When $-\lambda(d)<c<0$, the sharp constant $C_{\text{GN}}(c)$ is attained by a solution $Q_c$ to the elliptic equation
\[
-P_c Q_c -Q_c + Q_c^{\alpha+1}=0.
\]
\item[2.] When $c>0$, $C_{\text{GN}}(c)=C_{\text{GN}}(0)$, where $C_{\text{GN}}(0)$ is again the sharp constant to the standard Gagliardo-Nirenberg inequality
\[
\|f\|^{\alpha+2}_{L^{\alpha+2}} \leq C_{\text{GN}}(0) \|f\|^{\frac{4-(d-2)\alpha}{2}}_{L^2} \|f\|^{\frac{d\alpha}{2}}_{\dot{H}^1}.
\]
Moreover, $C_{\text{GN}}(c)$ is never attained. However, if we restrict attention to the Gagliardo-Nirenberg inequality for radial functions, then the sharp constant for the radial Gagliardo-Nirenberg inequality associated to the intercritical $(\nlsct)$, namely,
\[
C_{\text{GN}}(c, \text{rad}):= \sup\left\{ \|f\|^{\alpha+2}_{L^{\alpha+2}} \div \Big[\|f\|^{\frac{4-(d-2)\alpha}{2}} \|f\|^{\frac{d\alpha}{2}}_{\dot{H}^1_c} \Big] \ \Big| \ f \in H^1_c \backslash \{0\},  f \text{ radial}\right\}
\]
is attended by a radial solution $Q_{c,\text{rad}}$ to the elliptic equation
\[
-P_c Q_{c,\text{rad}} -Q_{c,\text{rad}} + Q_{c,\text{rad}}^{\alpha+1}=0.
\]
Since $C_{\text{GN}}(c)$ is never attained, the constant $C_{\text{GN}}(c,\text{rad})$ is strictly smaller than $C_{\text{GN}}(c)$.
\end{itemize} 
We define the following quantities:
\begin{align}
H(c):= E_{\cbar}(Q_{\cbar}) M(Q_{\cbar})^\sigma, \quad K(c):= \|Q_{\cbar}\|_{\dot{H}^1_{\cbar}} \|Q_{\cbar}\|^\sigma_{L^2}, \label{define Hc Kc}
\end{align}
where $\cbar = \min\{c, 0\}$. Our next result is the following global existence and blowup for the intercritical $(\nlsct)$.
\begin{theorem} \label{theorem global blowup NLS intercritical inverse square}
Let $d\geq 3, \alpha_\star<\alpha<\alpha^\star$ and $c\ne 0$ be such that $c>-\lambda(d)$. Let $u_0 \in H^1$ and $u$ be the corresponding solution of the intercritical $(\nlsce)$ (i.e. $\alpha_\star<\alpha<\alpha^\star$). Suppose that
\begin{align}
E_c(u_0) M(u_0)^\sigma < H(c). \label{condition below ground state NLS intercritical inverse square}
\end{align} 
\begin{itemize}
\item[1.] Global existence: If 
\begin{align}
\|u_0\|_{\dot{H}^1_c} \|u_0\|^\sigma_{L^2} < K(c), \label{condition global existence NLS intercritical inverse square}
\end{align}
then the solution $u$ exists globally in time and 
\begin{align}
\|u(t)\|_{\dot{H}^1_c} \|u(t)\|_{L^2}^\sigma < K(c). \label{property global solution NLS intercritical inverse square}
\end{align}
for any $t\in \R$.
\item[2.] Blowup: If 
\begin{align}
\|u_0\|_{\dot{H}^1_c} \|u_0\|^\sigma_{L^2} > K(c), \label{condition blowup NLS intercritical inverse square}
\end{align}
and either $xu_0 \in L^2$ or $u_0$ is radial, then the solution $u$ blows up in finite time and
\begin{align}
\|u(t)\|_{\dot{H}^1_c} \|u(t)\|^\sigma_{L^2} > K(c), \label{property blowup solution NLS intercritical inverse square}
\end{align}
for any $t$ in the existence time. Moreover, the finite time blowup still holds true if in place of $(\ref{condition below ground state NLS intercritical inverse square})$ and $(\ref{condition blowup NLS intercritical inverse square})$, we assume that $E_c(u_0)<0$. 
\end{itemize}
\end{theorem}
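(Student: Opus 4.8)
The plan is to adapt the Holmer--Roudenko argument to the inverse-square potential, the key simplification being that $c|x|^{-2}$ is homogeneous of degree $-2$ and hence enters the virial and Pohozaev identities exactly as the kinetic energy does. I would use three inputs. First, the variational identities for the ground state $Q_{\cbar}$: by Theorem~\ref{theorem gagliardo nirenberg inequality} the sharp constant $C_{\mathrm{GN}}(c)=C_{\mathrm{GN}}(\cbar)$ is realized on $Q_{\cbar}$, and pairing $-P_{\cbar}Q_{\cbar}-Q_{\cbar}+Q_{\cbar}^{\alpha+1}=0$ with $Q_{\cbar}$ and with $x\cdot\nabla Q_{\cbar}$ gives
\[
\|Q_{\cbar}\|^{\alpha+2}_{L^{\alpha+2}}=\tfrac{2(\alpha+2)}{d\alpha}\|Q_{\cbar}\|^2_{\dot{H}^1_{\cbar}}, \qquad E_{\cbar}(Q_{\cbar})=\tfrac{d\alpha-4}{2d\alpha}\|Q_{\cbar}\|^2_{\dot{H}^1_{\cbar}},
\]
hence the relation $H(c)=\tfrac{d\alpha-4}{2d\alpha}K(c)^2$ and the expression of $C_{\mathrm{GN}}(c)$ through $\|Q_{\cbar}\|_{L^2}$ and $\|Q_{\cbar}\|_{\dot{H}^1_{\cbar}}$. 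Second, the equivalence $\|u\|_{\dot{H}^1_c}\sim\|u\|_{\dot{H}^1}$ for $c>-\lambda(d)$, together with the $H^1$ local well-posedness of $(\nlsct)$ and its blowup alternative (a finite maximal time forces $\|u(t)\|_{\dot{H}^1_c}$ to be unbounded). Third, the virial identity for $(\nlsct)$: since $x\cdot\nabla(c|x|^{-2})=-2c|x|^{-2}$,
\[
\frac{d^2}{dt^2}\int|x|^2|u(t,x)|^2\,dx=8\|u(t)\|^2_{\dot{H}^1_c}-\frac{4d\alpha}{\alpha+2}\|u(t)\|^{\alpha+2}_{L^{\alpha+2}}=4d\alpha E_c(u_0)-2(d\alpha-4)\|u(t)\|^2_{\dot{H}^1_c},
\]
the last equality by \eqref{energy conservation}.

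I would first carry out a \emph{trapping} step. Set $y(t):=\|u(t)\|_{\dot{H}^1_c}\|u(t)\|^\sigma_{L^2}$, which by mass conservation and the norm equivalence is continuous and positive on the maximal interval. Combining the sharp Gagliardo--Nirenberg inequality associated to $(\nlsct)$, energy conservation and the choice of $\sigma$ in \eqref{define sigma}, one gets
\[
E_c(u_0)M(u_0)^\sigma=E_c(u(t))M(u(t))^\sigma\geq\tfrac12 y(t)^2-\tfrac{C_{\mathrm{GN}}(c)}{\alpha+2}y(t)^{\frac{d\alpha}{2}}=:f(y(t)),
\]
and $f$ increases on $[0,K(c)]$, attains its maximum $f(K(c))=H(c)$ there, and decreases afterwards. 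Under \eqref{condition below ground state NLS intercritical inverse square} the level $y(t)=K(c)$ is excluded, so by continuity $y(t)$ stays on the side of $K(c)$ fixed by $y(0)$. If \eqref{condition global existence NLS intercritical inverse square} holds this is exactly \eqref{property global solution NLS intercritical inverse square}, which bounds $\|u(t)\|_{\dot{H}^1}$ uniformly and gives global existence via the blowup alternative. If \eqref{condition blowup NLS intercritical inverse square} holds this is \eqref{property blowup solution NLS intercritical inverse square}, and since \eqref{condition below ground state NLS intercritical inverse square} is strict the trapping upgrades to a quantitative gap $\|u(t)\|^2_{\dot{H}^1_c}\geq(1+\delta_0)K(c)^2M(u_0)^{-\sigma}$ with $\delta_0>0$ depending only on $E_c(u_0)M(u_0)^\sigma/H(c)$; in the alternative hypothesis $E_c(u_0)<0$, energy conservation and the sharp Gagliardo--Nirenberg inequality instead yield a lower bound $\|u(t)\|^2_{\dot{H}^1_c}\geq c_0>0$.

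Then I would prove \emph{blowup}. If $xu_0\in L^2$, the weight $xu(t)$ stays in $L^2$ and $V_a(t):=\int|x|^2|u|^2\,dx$ satisfies $V_a''(t)=4d\alpha E_c(u_0)-2(d\alpha-4)\|u(t)\|^2_{\dot{H}^1_c}$; using $H(c)=\tfrac{d\alpha-4}{2d\alpha}K(c)^2$ with the gap from the trapping step (or $E_c(u_0)<0$ directly) one gets $V_a''(t)\leq-\eta<0$ for a fixed $\eta>0$, and since $V_a\geq0$ the forward maximal time must be finite. If $u_0$ is radial (no weight assumed) I would replace $|x|^2$ by $\varphi_R(x)=R^2\varphi(x/R)$ with $\varphi$ radial, $\varphi(r)=r^2$ for $r\leq1$, and $\varphi''\leq2$, $\varphi'(r)/r\leq2$ everywhere, so that $V_{\varphi_R}(t):=\int\varphi_R|u|^2$ obeys
\[
V_{\varphi_R}''(t)\leq 8\|u(t)\|^2_{\dot{H}^1_c}-\frac{4d\alpha}{\alpha+2}\|u(t)\|^{\alpha+2}_{L^{\alpha+2}}+CR^{-2}M(u_0)+C\int_{|x|\geq R}|u(t,x)|^{\alpha+2}\,dx,
\]
the extra terms (from $\mathrm{Hess}(\varphi_R)\leq2\,\mathrm{Id}$, $\Delta^2\varphi_R$, $2d-\Delta\varphi_R$ and $\nabla\varphi_R\cdot\nabla(c|x|^{-2})$) being supported in $|x|\geq R$, with the potential contribution of size $O(R^{-2})$ since $|x|^{-2}$ is smooth there. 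By the radial Sobolev inequality $\int_{|x|\geq R}|u|^{\alpha+2}\lesssim R^{-\frac{(d-1)\alpha}{2}}M(u_0)^{\frac{\alpha}{4}+1}\|u(t)\|^{\frac{\alpha}{2}}_{\dot{H}^1_c}$, and since $\alpha<\tfrac{4}{d-2}\leq4$ forces $\tfrac{\alpha}{2}<2$, Young's inequality splits this term into $\epsilon\|u(t)\|^2_{\dot{H}^1_c}$ plus a constant, which together with $CR^{-2}M(u_0)$ I collect into a single $c_R\to0$ as $R\to\infty$. Feeding in \eqref{energy conservation} and then the trapping (using $2d\alpha-8=2(d\alpha-4)$ and $4d\alpha H(c)=2(d\alpha-4)K(c)^2$), one obtains, for $\epsilon$ small and then $R$ large,
\[
V_{\varphi_R}''(t)\leq-(2d\alpha-8-\epsilon)\|u(t)\|^2_{\dot{H}^1_c}+4d\alpha E_c(u_0)+c_R\leq-\eta<0,
\]
so finite-time blowup follows again from $V_{\varphi_R}\geq0$; when $E_c(u_0)<0$ one uses $\|u(t)\|^2_{\dot{H}^1_c}\geq c_0$ in place of the gap.

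The main obstacle is this last step for radial data: with no \emph{a priori} bound on $\|u(t)\|_{\dot{H}^1}$, the cut-off errors in $V_{\varphi_R}''$ must be controlled purely through the structure of the estimate --- the radial Sobolev bound, the strict subcriticality $\alpha/2<2$, and Young's inequality --- so that they are genuinely absorbed by the coercive term $-(2d\alpha-8)\|u(t)\|^2_{\dot{H}^1_c}$; the order of quantifiers (first choose $\epsilon$ small depending on $\delta_0$ or $c_0$, then take $R$ large depending on $M(u_0)$) is what makes $V_{\varphi_R}''$ uniformly negative along the flow. A secondary point, which is precisely why $H(c)$ and $K(c)$ in \eqref{define Hc Kc} are built from $Q_{\cbar}$ rather than from a ground state of $P_c$, is that for $c>0$ the sharp constant $C_{\mathrm{GN}}(c)=C_{\mathrm{GN}}(0)$ is attained only by $Q_0$, so the variational identities above must be read with $\cbar=\min\{c,0\}$. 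Finally, all the virial identities should be made rigorous by the usual truncation and regularization procedure, the $|x|^{-2}$ singularity at the origin being harmless thanks to the sharp Hardy inequality \eqref{sharp hardy inequality}.
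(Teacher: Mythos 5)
Your proposal is correct and follows essentially the same route as the paper: the trapping argument via $f(y)=\tfrac12 y^2-\tfrac{C_{\text{GN}}(c)}{\alpha+2}y^{d\alpha/2}$ with $f(K(c))=H(c)$ and a continuity argument, the quantitative gap $\|u(t)\|_{\dot H^1_c}\|u(t)\|^\sigma_{L^2}\geq(1+\delta')K(c)$ extracted from the strictness of \eqref{condition below ground state NLS intercritical inverse square}, the standard virial identity for finite-variance data, and the localized virial estimate combined with the radial Sobolev embedding and Young's inequality (using $\alpha/2<2$) for radial data, with the $E_c(u_0)<0$ case handled by dropping the coercive term. The only notable (and harmless) deviation is that you bound the potential contribution $4c\int_{|x|>R}\nabla\varphi_R\cdot x|x|^{-4}|u|^2\,dx$ directly by $O(R^{-2})M(u_0)$ instead of grouping it with $-8\|u(t)\|^2_{\dot H^1_c(|x|>R)}$ as in the paper's Lemma \ref{lem localized virial identity}, which is if anything cleaner and insensitive to the sign of $c$.
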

\begin{rem}\label{rem global blowup NLS intercritical inverse square}
\begin{itemize}
\item[1.] 
In \cite{KillipMurphyVisanZheng}, Killip-Murphy-Visan-Zheng considered the cubic $(\nlsct)$ in $3D$ (i.e. $\alpha=2$ and $c>-\frac{1}{4}$) and proved that the global existence as well as scattering hold true under the assumptions $(\ref{condition below ground state NLS intercritical inverse square}), (\ref{condition global existence NLS intercritical inverse square})$ and the blowup holds true under the assumptions $(\ref{condition below ground state NLS intercritical inverse square}), (\ref{condition blowup NLS intercritical inverse square})$. Recently, Lu-Miao-Murphy in \cite{LuMiaoMurphy} proved a similar result as in \cite{KillipMurphyVisanZheng} for the intercritical $(\nlsct)$ with
\[
\renewcommand*{\arraystretch}{1.2}
\left\{
\begin{array}{cl}
c>-\frac{1}{4} &\text{if } d=3, \quad \frac{4}{3}<\alpha\leq 2 \\
c>-\lambda(d) + \left(\frac{d-2}{2}-\frac{1}{\alpha}\right)^2 &\text{if } 3\leq d\leq 6, \quad \max\left\{\frac{2}{d-2}, \frac{4}{d} \right\} <\alpha<\frac{4}{d-2}. 
\end{array}
\right.
\]
Here we extend the global existence and blowup results of \cite{KillipMurphyVisanZheng, LuMiaoMurphy} to any dimensions $d\geq3$ and the full range $c>-\lambda(d)$. We expect that the global solution in Theorem $\ref{theorem global blowup NLS intercritical inverse square}$ scatters in $H^1$ under a certain restriction on $c$. Note that the scattering of global solutions depends heavily on Strichartz estimates which were proved in \cite{BurqPlanchonStalkerTahvildar-Zadeh, BoucletMizutani}. In order to successfully apply Strichartz estimates, we need the equivalence of Sobolev norms between the ones associated to $P_c$ and those associated to $-\Delta$ (see Subsection $\ref{subsection equivalent sobolev norms}$ for more details). This will lead to a restriction on the validity of $c$.    
\item[2.] Theorem $\ref{theorem global blowup NLS intercritical inverse square}$ says that the condition $(\ref{condition global existence NLS intercritical inverse square})$ is sharp for the global existence except for the threshold level
\[
\|u_0\|_{\dot{H}^1_c}\|u_0\|^\sigma_{L^2} = K(c).
\]
It is an interesting open problem to show that there exists blowup solutions to the intercritical (NLS$_0$) and $(\nlsct)$ equations at this threshold.
\item[3.] It is worth mentioning that if the energy of the initial data is negative, then $(\ref{condition below ground state NLS intercritical inverse square})$ is always satisfied. Indeed, we will see in $(\ref{energy ground state})$ that
\[
E(Q_{\cbar}) =\frac{d\alpha-4}{2(4-(d-2)\alpha)}\|Q_{\cbar}\|^2_{L^2}=\frac{d\alpha-4}{2d\alpha} \|Q_{\cbar}\|^2_{\dot{H}^1_{\cbar}},
\] 
hence $H(c)$ is always non-negative.
\end{itemize}
\end{rem}
In the case $c>0$, we have the following improved result for radial solutions. 
\begin{theorem} \label{theorem global blowup NLS intercritical inverse square radial}
Let $d\geq 3, \alpha_\star<\alpha<\alpha^\star$ and $c>0$. Let $u_0 \in H^1$ be radial and $u$ the corresponding solution of the intercritical $(\nlsce)$ (i.e. $\alpha_\star<\alpha<\alpha^\star$). Suppose that
\begin{align}
E_c(u_0) M(u_0)^\sigma < H(c, \emph{rad})=:E_c(Q_{c,\emph{rad}}) M(Q_{c,\emph{rad}})^\sigma. \label{condition below ground state NLS intercritical inverse square radial}
\end{align} 
\begin{itemize}
\item[1.] Global existence: If 
\begin{align}
\|u_0\|_{\dot{H}^1_c} \|u_0\|^\sigma_{L^2} < K(c,\emph{rad})=:\|Q_{c,\emph{rad}}\|_{\dot{H}^1_c} \|Q_{c,\emph{rad}}\|^\sigma_{L^2}, \label{condition global existence NLS intercritical inverse square radial}
\end{align}
then the solution $u$ exists globally in time and 
\begin{align}
\|u(t)\|_{\dot{H}^1_c} \|u(t)\|_{L^2}^\sigma < K(c,\emph{rad}). \label{property global solution NLS intercritical inverse square radial}
\end{align}
for any $t\in \R$.
\item[2.] Blowup: If 
\begin{align}
\|u_0\|_{\dot{H}^1_c} \|u_0\|^\sigma_{L^2} > K(c,\emph{rad}), \label{condition blowup NLS intercritical inverse square radial}
\end{align}
then the solution $u$ blows up in finite time and
\begin{align}
\|u(t)\|_{\dot{H}^1_c} \|u(t)\|^\sigma_{L^2} > K(c,\emph{rad}), \label{property blowup solution NLS intercritical inverse square radial}
\end{align}
for any $t$ in the existence time. Moreover, the finite time blowup still holds true if in place of $(\ref{condition below ground state NLS intercritical inverse square radial})$ and $(\ref{condition blowup NLS intercritical inverse square radial})$, we assume that $E_c(u_0)<0$. 
\end{itemize}
\end{theorem}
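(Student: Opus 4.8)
The plan is to follow the Holmer--Roudenko--type variational and virial scheme, adapted to the operator $P_c$ and built on the \emph{radial} sharp Gagliardo--Nirenberg inequality and the radial ground state $Q_{c,\text{rad}}$ supplied by Theorem $\ref{theorem gagliardo nirenberg inequality}$. First I would note that for $c>0$ the operator $P_c$, hence the equation $(\nlsct)$, is invariant under the rotation group, so by uniqueness in the local Cauchy theory for the intercritical $(\nlsct)$ a radial datum $u_0$ produces a radial solution $u(t)$ throughout its lifespan; one may therefore work entirely with radial $H^1_c$ functions. For these, Theorem $\ref{theorem gagliardo nirenberg inequality}$ gives
\[
\|f\|^{\alpha+2}_{L^{\alpha+2}}\le C_{\text{GN}}(c,\text{rad})\,\|f\|^{\frac{4-(d-2)\alpha}{2}}_{L^2}\,\|f\|^{\frac{d\alpha}{2}}_{\dot{H}^1_c},
\]
with equality at $Q_{c,\text{rad}}$, which solves $-P_cQ_{c,\text{rad}}-Q_{c,\text{rad}}+Q_{c,\text{rad}}^{\alpha+1}=0$. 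Multiplying this elliptic equation by $Q_{c,\text{rad}}$ and by $x\cdot\nabla Q_{c,\text{rad}}$ and integrating --- using that $c|x|^{-2}$ scales exactly as $-\Delta$ --- I would record the Pohozaev relations $\|Q_{c,\text{rad}}\|^2_{L^2}=\frac{4-(d-2)\alpha}{d\alpha}\|Q_{c,\text{rad}}\|^2_{\dot{H}^1_c}$ and $\|Q_{c,\text{rad}}\|^{\alpha+2}_{L^{\alpha+2}}=\frac{2(\alpha+2)}{d\alpha}\|Q_{c,\text{rad}}\|^2_{\dot{H}^1_c}$, which give $E_c(Q_{c,\text{rad}})=\frac{d\alpha-4}{2d\alpha}\|Q_{c,\text{rad}}\|^2_{\dot{H}^1_c}$, an explicit expression for $C_{\text{GN}}(c,\text{rad})$ in terms of $\|Q_{c,\text{rad}}\|_{\dot{H}^1_c}$, and the key identity $H(c,\text{rad})=\frac{d\alpha-4}{2d\alpha}K(c,\text{rad})^2$.

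Next I would run the trapping argument. Setting $y(t):=\|u(t)\|^2_{\dot{H}^1_c}M(u_0)^\sigma$, conservation of mass and energy together with the radial Gagliardo--Nirenberg inequality give --- after the powers of $M(u_0)$ cancel by the choice of $\sigma$ --- the bound $E_c(u_0)M(u_0)^\sigma\ge h(y(t))$, where $h(y):=\frac12 y-\frac{C_{\text{GN}}(c,\text{rad})}{\alpha+2}y^{d\alpha/4}$ increases on $[0,K(c,\text{rad})^2]$, decreases afterwards, and has maximum value $H(c,\text{rad})$ at $y=K(c,\text{rad})^2$ (by the first paragraph). Hypothesis $(\ref{condition below ground state NLS intercritical inverse square radial})$ forces $y(t)\ne K(c,\text{rad})^2$ for all $t$, so by continuity $y(\cdot)$ stays on the side of $K(c,\text{rad})^2$ determined by $y(0)$. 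Under $(\ref{condition global existence NLS intercritical inverse square radial})$ this gives $\sup_t\|u(t)\|_{\dot{H}^1_c}<\infty$, hence $\sup_t\|u(t)\|_{\dot{H}^1}<\infty$ by the Hardy inequality $(\ref{sharp hardy inequality})$, so $u$ is global by the blowup alternative and $(\ref{property global solution NLS intercritical inverse square radial})$ holds. Under $(\ref{condition blowup NLS intercritical inverse square radial})$ one gets $(\ref{property blowup solution NLS intercritical inverse square radial})$ and, comparing $h(y(t))\le E_c(u_0)M(u_0)^\sigma$ with the shape of $h$, a uniform lower bound $\inf_t\|u(t)\|^2_{\dot{H}^1_c}\ge\mu$ with $\mu$ strictly larger than $\frac{2d\alpha}{d\alpha-4}E_c(u_0)$; the same conclusion holds when $E_c(u_0)<0$, since then $h(y(0))<0<H(c,\text{rad})$ already places $y(0)>K(c,\text{rad})^2$.

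Finally comes the blowup. The virial identity for $(\nlsct)$ reads $\frac{d^2}{dt^2}\int|x|^2|u(t)|^2\,dx=8\|u(t)\|^2_{\dot{H}^1_c}-\frac{4d\alpha}{\alpha+2}\|u(t)\|^{\alpha+2}_{L^{\alpha+2}}$, since $x\cdot\nabla(c|x|^{-2})=-2c|x|^{-2}$ recombines with $-\Delta$ and leaves no extra term; rewriting the right-hand side through the energy, it equals $-2(d\alpha-4)\|u(t)\|^2_{\dot{H}^1_c}+4d\alpha E_c(u_0)$, which by the trapping step is $\le-\epsilon_0<0$ in the blowup regime. As $u_0$ is only assumed radial, I would localize: with $\varphi_R(x)=R^2\varphi(|x|/R)$ for $\varphi$ radial, $\varphi(r)=r^2$ on $r\le1$, $0\le\varphi'(r)\le2r$, $\varphi''\le2$ and $\varphi$ bounded, a direct computation gives
\[
\frac{d^2}{dt^2}\int\varphi_R|u(t)|^2\,dx\le 8\|u(t)\|^2_{\dot{H}^1_c}-\frac{4d\alpha}{\alpha+2}\|u(t)\|^{\alpha+2}_{L^{\alpha+2}}+CR^{-2}\|u_0\|^2_{L^2}+C\int_{|x|>R}|u(t,x)|^{\alpha+2}\,dx,
\]
where the exterior part of the potential term, nonnegative because $c>0$, is absorbed into the $R^{-2}$ error and the estimate on $\{|x|\le R\}$ uses the pointwise positivity of $|\nabla u|^2+c|x|^{-2}|u|^2$. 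By the radial Sobolev inequality $\|u\|^2_{L^\infty(|x|>R)}\le CR^{-(d-1)}\|u_0\|_{L^2}\|\nabla u\|_{L^2}$, the exterior nonlinear term is $\le CR^{-(d-1)\alpha/2}\|u_0\|^{\alpha/2+2}_{L^2}\|\nabla u\|^{\alpha/2}_{L^2}$; since $\alpha<\alpha^\star\le4$ for $d\ge3$ and $\|\nabla u\|_{L^2}\le\|u\|_{\dot{H}^1_c}$ (as $c>0$), Young's inequality bounds it by $B_R\big(1+\|u(t)\|^2_{\dot{H}^1_c}\big)$ with $B_R\to0$ as $R\to\infty$, uniformly in $t$. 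Inserting this and the lower bound $\|u(t)\|^2_{\dot{H}^1_c}\ge\mu$ into the displayed inequality, for $R$ large one obtains $\frac{d^2}{dt^2}\int\varphi_R|u(t)|^2\,dx\le-\epsilon_0/2<0$ on the whole lifespan; since $\int\varphi_R|u|^2\ge0$, the solution cannot be global, i.e., it blows up in finite time. The main --- and essentially only non-routine --- point is this last step: the precise choice of $\varphi_R$ together with the proof that \emph{every} error term in the localized virial is small \emph{uniformly in time}. That is where the radial Sobolev embedding and the favorable sign of the inverse-square potential for $c>0$ are used, and where the uniform lower bound $\|u(t)\|^2_{\dot{H}^1_c}\ge\mu$ from the trapping step is indispensable, as it is what permits absorbing the $\|u(t)\|^2_{\dot{H}^1_c}$-type errors into the negative virial term.
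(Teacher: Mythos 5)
Your proposal is correct and follows essentially the same route as the paper: the paper proves Theorem \ref{theorem global blowup NLS intercritical inverse square radial} by simply rerunning the argument for Theorem \ref{theorem global blowup NLS intercritical inverse square} with $Q_{c,\text{rad}}$, $C_{\text{GN}}(c,\text{rad})$, $H(c,\text{rad})$, $K(c,\text{rad})$ in place of the non-radial quantities, i.e.\ exactly the Pohozaev identities, the trapping argument via the Gagliardo--Nirenberg functional, the refined coercivity bound, and the localized virial estimate with the radial Sobolev embedding that you describe. Your reparametrization $y=x^{2}$ and your derivation of the uniform gap from the shape of $h$ are only cosmetic variants of the paper's $(1+\delta')$ continuity argument.
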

Since $C_{\text{GN}}(c, \text{rad})< C_{\text{GN}}(c)$, we will see in Remark $\ref{rem ground state equation}$ that $H(c)<H(c,\text{rad})$ and $K(c)<K(c,\text{rad})$. This shows that the class of radial solutions enjoys strictly larger thresholds for the global existence and the blowup. 
\subsection{Energy-critical case} We finally consider the energy-critical case. As above, we recall known results for the focusing energy-critical nonlinear Schr\"odinger equation, i.e. $c=0$ and $\alpha=\alpha^\star$ in $(\nlsct)$. The global existence, scattering and blowup for the energy-critical (NLS$_0$) were first studied in \cite{KenigMerle} where Kenig-Merle proved the global existence, scattering and blowup for the equation under the radial assumption of initial data in dimensions $d=3, 4, 5$. This was extended to dimensions $d\geq 3$ in \cite{KillipVisanZhang-unpublish}. Later, Killip-Visan in \cite{KillipVisan} proved the global existence and scattering for the equation with general (non-radial) data in dimensions five and higher. They also proved the existence of blowup solutions in dimensions $d\geq 3$. The global existence and scattering for the energy-critical (NLS$_0$) for general data still remain open for $d=3, 4$. To state their results, we recall the following facts. Let 
\begin{align}
W_0(x):=\Big(1+ \frac{|x|^2}{d(d-2)}\Big)^{-\frac{d-2}{2}}. \label{define W_0}
\end{align}
It is well-known that $W$ solves the elliptic equation
\[
\Delta W_0 + |W_0|^{\alpha^\star} W_0=0.
\]
In particular, $W_0$ is a stationary solution to the energy-critical (NLS$_0$). Note that $W_0 \in \dot{H}^1$ but it need not belong to $L^2$. 
\begin{theorem}[\cite{KenigMerle}] \label{theorem global scattering blowup NLS energy-critical Kenig-Merle}
Let $d=3, 4, 5$. Let $u_0 \in \dot{H}^1$ be radial and $u$ be the corresponding solution to the energy-critical \emph{(NLS$_0$)} (i.e. $c=0$ and $\alpha=\alpha^\star$ in $(\nlsce)$). Suppose that $E_0(u_0)<E_0(W_0)$.
\begin{itemize}
\item[1.] If $\|u_0\|_{\dot{H}^1} <\|W_0\|_{\dot{H}^1}$, then the solution $u$ exists globally and scatters in $\dot{H}^1$.
\item[2.] If $\|u_0\|_{\dot{H}^1} >\|W_0\|_{\dot{H}^1}$ and either $xu_0 \in L^2$ or $u_0 \in H^1$ is radial, then the solution $u$ blows up in finite time. Moreover, the finite time blowup still holds true if in place of $E_0(u_0) <E_0(W_0)$ and $\|u_0\|_{\dot{H}^1} >\|W_0\|_{\dot{H}^1}$, we assume that $E_0(u_0)<0$.
\end{itemize}
\end{theorem}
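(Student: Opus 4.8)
This is the Kenig--Merle concentration-compactness/rigidity theorem, and I would prove it along exactly those lines. The backbone is the sharp Sobolev inequality $\|f\|_{L^{\alpha^\star+2}}\le C_d\|f\|_{\dot H^1}$, whose optimizers are the dilates and multiples of $W_0$; this pins $E_0(W_0)$ down as a threshold and, more importantly, gives coercivity below it: on the set $\{\|f\|_{\dot H^1}<\|W_0\|_{\dot H^1}\}$ one has $E_0(f)\gtrsim \|f\|_{\dot H^1}^2$ and the strict inequality $\|\nabla f\|_{L^2}^2-\|f\|_{L^{\alpha^\star+2}}^{\alpha^\star+2}\ge\delta\|\nabla f\|_{L^2}^2$ for some $\delta=\delta(E_0(f))>0$; on $\{\|f\|_{\dot H^1}>\|W_0\|_{\dot H^1}\}$ with $E_0(f)<E_0(W_0)$ the reverse sign $\|\nabla f\|_{L^2}^2-\|f\|_{L^{\alpha^\star+2}}^{\alpha^\star+2}\le-\delta\|\nabla f\|_{L^2}^2$ holds. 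Combined with energy conservation and continuity of $t\mapsto\|u(t)\|_{\dot H^1}$, this shows the two sets are invariant under the flow as long as $E_0(u_0)<E_0(W_0)$. Part 1 is then reduced to the claim: every radial solution with $E_0(u_0)<E_0(W_0)$ and $\|u_0\|_{\dot H^1}<\|W_0\|_{\dot H^1}$ has finite global Strichartz norm (whence scattering, by the standard tail argument).

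First I would assemble the $\dot H^1$ Cauchy theory for the energy-critical equation: Strichartz estimates for $e^{it\Delta}$ at the energy-critical scaling, small-data global existence and scattering, and a long-time perturbation (stability) lemma. Then, assuming for contradiction that scattering fails below $E_0(W_0)$, I set $E_c^\star:=\sup\{E\le E_0(W_0):\text{every radial }u_0\text{ with }E_0(u_0)<E,\ \|u_0\|_{\dot H^1}<\|W_0\|_{\dot H^1}\text{ scatters}\}<E_0(W_0)$. Using the linear profile decomposition in $\dot H^1$ (Keraani) together with the stability lemma and the variational coercivity (to transfer the sub-threshold condition to each profile), I would extract a minimal non-scattering \emph{critical element}: a radial solution $u_c$ with $E_0(u_c)=E_c^\star$, $\|u_c\|_{\dot H^1}<\|W_0\|_{\dot H^1}$, infinite Strichartz norm, whose trajectory $\{u_c(t)\}$ is precompact in $\dot H^1$ modulo the scaling symmetry $u\mapsto\lambda^{(d-2)/2}u(\lambda\cdot)$ only --- radiality is what removes the spatial-translation parameter, and this is precisely why the statement is restricted to radial data. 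From precompactness one obtains a uniform-in-$t$ localization of the $\dot H^1$-energy near the origin at the modulated scale $\lambda(t)$, plus control on $\lambda(t)$ (in particular $\lambda(t)$ bounded below on a finite-time interval).

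\textbf{Main obstacle: rigidity.} The decisive and hardest step is to show such a $u_c$ cannot exist. I would run a localized virial/Morawetz argument: with a smooth radial cutoff $\varphi_R$ equal to $|x|^2$ on $|x|\le R$ and constant for $|x|\ge 2R$, set $Z_R(t):=\int\nabla\varphi_R(x)\cdot\mathrm{Im}\big(\overline{u_c}\,\nabla u_c\big)\,dx$, which is bounded uniformly in $t$. Differentiating, the leading term is a positive multiple of $\|\nabla u_c\|_{L^2}^2-\|u_c\|_{L^{\alpha^\star+2}}^{\alpha^\star+2}$, which the sub-threshold variational estimate makes $\ge\delta'\|\nabla u_c\|_{L^2}^2$ uniformly; the correction terms, supported in $R\le|x|\le2R$, are $o(1)$ uniformly in $t$ once $R$ is taken large relative to the spatial scale of the compact family, with the radial Sobolev/Strauss decay used to handle the nonlinear correction. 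Integrating $\frac{d}{dt}Z_R$ over a long time interval then contradicts the boundedness of $Z_R$, forcing $u_c\equiv0$ --- impossible. (In the version where $\lambda(t)$ does not degrade one gets global existence but a growing virial; where it does, one gets finite-time blowup but with $\|u_c(t)\|_{\dot H^1}\to\infty$, contradicting $\|u_c(t)\|_{\dot H^1}<\|W_0\|_{\dot H^1}$.) This closes Part 1.

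\textbf{Part 2 (blowup).} When $xu_0\in L^2$ I would use the classical Glassey identity; in the energy-critical case the virial computation collapses to
\[
\frac{d^2}{dt^2}\int |x|^2|u(t,x)|^2\,dx = 8\Big(\|\nabla u(t)\|_{L^2}^2-\|u(t)\|_{L^{\alpha^\star+2}}^{\alpha^\star+2}\Big).
\]
Either the super-threshold hypothesis $\|u_0\|_{\dot H^1}>\|W_0\|_{\dot H^1}$ (propagated by the invariance above), or directly $E_0(u_0)<0$ (which forces the super-threshold side since the sub-threshold side has $E_0\ge0$), gives $\|\nabla u(t)\|_{L^2}^2-\|u(t)\|_{L^{\alpha^\star+2}}^{\alpha^\star+2}\le-\delta<0$ for all $t$; hence the nonnegative function $\int|x|^2|u|^2$ has second derivative $\le-8\delta$, which is impossible on $[0,\infty)$, so the solution blows up in finite time. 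For radial $u_0\in H^1$ with $xu_0\notin L^2$, I would replace $|x|^2$ by a radial cutoff $\psi_R$ agreeing with $|x|^2$ on $|x|\le R$; differentiating twice produces, besides the negative main term, error terms that by radiality are controlled via the Strauss inequality and Hölder/Young and are absorbed into the main term for $R$ large (uniformly up to the blowup time, using that $\|u(t)\|_{\dot H^1}$ remains above $\|W_0\|_{\dot H^1}$), again driving the truncated variance negative in finite time and contradicting global existence. The mass term introduced by the cutoff is harmless here since $u_0\in H^1$.

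\defendproof
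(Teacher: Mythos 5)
This theorem is stated in the paper as a quoted result of Kenig--Merle \cite{KenigMerle} and is not proved there, so there is no in-paper proof to compare against; what you have written is a road map of the original argument, and it is a faithful and correctly organized one. For Part 1 you correctly identify the three pillars (variational coercivity below the $W_0$ threshold from the sharp Sobolev inequality, extraction of a minimal critical element via profile decomposition plus long-time perturbation --- with radiality killing the translation parameter, which is exactly why the theorem is stated for radial data in $d=3,4,5$ --- and rigidity via a localized virial/Morawetz argument on the compact-modulo-scaling trajectory). Be aware that each of these is a substantial lemma that you invoke rather than establish, so as written this is an outline, not a proof. Part 2 is essentially complete: the constant $\tfrac{4d\alpha^\star}{\alpha^\star+2}=8$ does make the virial identity collapse to $8\big(\|\nabla u\|_{L^2}^2-\|u\|_{L^{\alpha^\star+2}}^{\alpha^\star+2}\big)$, the refined variational estimate makes this $\le-\delta<0$ uniformly on the super-threshold side (and directly from $E_0(u_0)<0$ since $8\|\nabla u\|^2-8\|u\|^{\alpha^\star+2}_{L^{\alpha^\star+2}}=4d\alpha^\star E_0(u)+(8-2d\alpha^\star)\|\nabla u\|^2$ with $8-2d\alpha^\star<0$), and your truncated-variance argument for radial $H^1$ data --- with the Strauss inequality controlling the exterior nonlinear term and the lower bound $\|u(t)\|_{\dot H^1}>\|W_0\|_{\dot H^1}$ used to absorb it --- is precisely the mechanism the paper itself implements in Sections 5 and 7 for the $c\ne 0$ analogue (Theorem \ref{theorem blowup NLS energy-critical inverse square}).
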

\begin{theorem} \label{theorem global scattering blowup NLS energy-critical Killip-Visan}
Let $u_0 \in \dot{H}^1$ and $u$ be the corresponding solution to the energy-critical \emph{(NLS$_0$)}. Suppose that $E_0(u_0)<E_0(W_0)$.
\begin{itemize}
\item[1.] \cite{KillipVisan, Dodson} If $d\geq 4$ and $\|u_0\|_{\dot{H}^1} <\|W_0\|_{\dot{H}^1}$, then the solution $u$ exists globally and scatters in $\dot{H}^1$.
\item[2.] \cite{KillipVisan} If $d\geq 3, \|u_0\|_{\dot{H}^1} >\|W_0\|_{\dot{H}^1}$ and either $xu_0 \in L^2$ or $u_0 \in H^1$ is radial, then the solution $u$ blows up in finite time. Moreover, the finite time blowup still holds true if in place of $E_0(u_0) <E_0(W_0)$ and $\|u_0\|_{\dot{H}^1} >\|W_0\|_{\dot{H}^1}$, we assume that $E_0(u_0)<0$.
\end{itemize}
\end{theorem}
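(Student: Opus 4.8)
The plan is to recall the structure of these by-now-classical arguments, since both parts follow from \cite{KenigMerle}, \cite{KillipVisan} and \cite{Dodson}. One works throughout in the critical space $\dot H^1$, using the local well-posedness and small-data scattering theory of Cazenave--Weissler together with the attendant stability/perturbation theory, and the sharp Sobolev (Aubin--Talenti) inequality
\[
\|f\|_{L^{\alpha^\star+2}} \le C_d\,\|f\|_{\dot H^1}, \qquad \alpha^\star+2=\tfrac{2d}{d-2},
\]
whose optimizers are, modulo the symmetries of $\dot H^1$, the functions $W_0$ of $(\ref{define W_0})$. The common first step for both parts is the \emph{energy-trapping} lemma: if $E_0(u_0)<E_0(W_0)$ and $\|u_0\|_{\dot H^1}\ne\|W_0\|_{\dot H^1}$, then conservation of energy, continuity of $t\mapsto\|u(t)\|_{\dot H^1}$, and the variational characterization of $W_0$ force the sign of $\|u(t)\|_{\dot H^1}-\|W_0\|_{\dot H^1}$ to be preserved along the flow; on the subthreshold side one gets $\|u(t)\|_{\dot H^1}^2\le(1-\delta)\|W_0\|_{\dot H^1}^2$ and $E_0(u(t))\sim\|u(t)\|_{\dot H^1}^2$ for some $\delta>0$, while on the superthreshold side the functional $K(u(t)):=\|\nabla u(t)\|_{L^2}^2-\|u(t)\|_{L^{\alpha^\star+2}}^{\alpha^\star+2}$ stays $\le-\delta<0$ throughout the lifespan.

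For Part 1, I would run the Kenig--Merle concentration-compactness/rigidity scheme. Energy trapping yields the uniform bound $\sup_t\|u(t)\|_{\dot H^1}\lesssim 1$, so it remains to exclude the failure of scattering. Using the linear profile decomposition for the energy-critical Schr\"odinger propagator (Keraani) and the stability theory, one shows that if scattering failed for some subthreshold datum there would exist a nonzero minimal-energy solution whose trajectory is precompact in $\dot H^1$ modulo the symmetry group (scaling, and space translations in the nonradial setting). A rigidity theorem then shows no such almost-periodic solution can exist: for $d=3,4,5$ and radial data this is Kenig--Merle's localized virial argument \cite{KenigMerle}; for $d\ge5$ and general data it is the long-time Strichartz estimate and frequency-localized interaction Morawetz inequality of Killip--Visan \cite{KillipVisan}; for $d=4$ and general data it is Dodson's refinement \cite{Dodson}. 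I would simply invoke these.

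Part 2 is the virial/convexity argument. If $xu_0\in L^2$, then $V(t):=\int|x|^2|u(t,x)|^2\,dx$ is finite on the lifespan and, in the energy-critical case, satisfies $V''(t)=8\,K(u(t))$; energy trapping gives $V''(t)\le-8\delta<0$, so $V$ would reach zero in finite time, which is impossible, forcing the maximal time to be finite. For radial $u_0\in H^1$ one replaces $|x|^2$ by a smooth weight $\varphi_R$ equal to $|x|^2$ on $|x|\le R$, obtaining $\frac{d^2}{dt^2}\int\varphi_R|u|^2\le 8K(u(t))+CR^{-2}+(\text{error})$, where the error is controlled by the radial Sobolev embedding and the uniform $\dot H^1$ bound from energy trapping; taking $R$ large makes the right-hand side $\le-4\delta$ and one concludes as before. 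Finally, when $E_0(u_0)<0$ the assumption $\|u_0\|_{\dot H^1}>\|W_0\|_{\dot H^1}$ is not needed: from $E_0(u)=\tfrac12\|\nabla u\|_{L^2}^2-\tfrac1{\alpha^\star+2}\|u\|_{L^{\alpha^\star+2}}^{\alpha^\star+2}$ one reads off $K(u(t))\le 2E_0(u_0)<0$ directly, and the same virial arguments apply.

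The genuinely difficult ingredient is Part 1, and within it the rigidity step for nonradial data in low dimensions — this is why the statement restricts global existence and scattering to $d\ge4$, the case $d=4$ being recovered only through Dodson's work, while $d=3$ for general data remains open. Part 2, by contrast, is comparatively soft, the only technical point being the truncated virial estimate in the radial case.
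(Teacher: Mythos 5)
This theorem is quoted in the paper as a known result and is given no proof there: the paper simply cites \cite{KillipVisan} and \cite{Dodson} for Part 1 and \cite{KillipVisan} for Part 2. Your outline is a faithful account of how those references establish the result — concentration compactness plus rigidity for the subthreshold global existence and scattering, and the (localized) virial/coercivity argument for the superthreshold blowup — and your Part 2 is essentially the same argument the paper itself carries out for the analogue with inverse-square potential (Theorem \ref{theorem blowup NLS energy-critical inverse square}, proved in Section \ref{section blowup}), including the correct identity $V''(t)=8\bigl(\|\nabla u(t)\|_{L^2}^2-\|u(t)\|_{L^{\alpha^\star+2}}^{\alpha^\star+2}\bigr)$ in the energy-critical case.

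One small imprecision in your radial blowup step: in the superthreshold regime there is no uniform \emph{upper} bound on $\|u(t)\|_{\dot H^1}$, so the truncated-virial error $R^{-\frac{(d-1)\alpha^\star}{2}}\|u(t)\|_{\dot H^1}^{\alpha^\star/2}$ cannot be controlled by "the uniform $\dot H^1$ bound from energy trapping." The correct mechanism, which is the one the paper uses for Theorem \ref{theorem blowup NLS energy-critical inverse square}, is to strengthen the coercivity to $8\|u(t)\|_{\dot H^1}^2-8\|u(t)\|_{L^{\alpha^\star+2}}^{\alpha^\star+2}+\ep\|u(t)\|_{\dot H^1}^2\le -c<0$ via the refined lower bound $\|u(t)\|_{\dot H^1}\ge(1+\delta')\|W_0\|_{\dot H^1}$, and then absorb the error into the spare $\ep\|u(t)\|_{\dot H^1}^2$ term using $\alpha^\star/2\le 2$ and the lower bound on $\|u(t)\|_{\dot H^1}$, taking $R$ large. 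With that adjustment the sketch is complete at the level of detail appropriate for a cited background theorem.
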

\begin{rem} \label{rem incompatible blowup condition}
Note that the conditions $E_0(u_0)<E_0(W_0)$ and $\|u_0\|_{\dot{H}^1} =\|W_0\|_{\dot{H}^1}$ are incompatible.
\end{rem}
Now let $c\ne 0$ satisfy $c>-\lambda(d)$, and let $C_{\text{SE}}(c)$ be the sharp constant in the Sobolev embedding inequality associated to the energy-critical $(\nlsct)$, namely,
\[
C_{\text{SE}}(c):=\sup \left\{ \|f\|_{L^{\alpha^\star+2}} \div \|f\|_{\dot{H}^1_c} \ | \ f \in \dot{H}^1_c \backslash \{0\} \right\}.
\]
We will see in Theorem $\ref{theorem sobolev embedding inequality}$ that:
\begin{itemize}
\item[1.] When $-\lambda(d)<c<0$, the sharp constant $C_{\text{SE}}(c)$ is attained by functions $f(x)$ of the form $\lambda W_c (\mu x)$ for some $\lambda \in \C$ and $\mu>0$, where
\begin{align}
W_c(x) := [d(d-2) \beta^2]^{\frac{d-2}{4}} \Big[\frac{|x|^{\beta-1}}{1+|x|^{2\beta}} \Big]^{\frac{d-2}{2}}, \label{define W_c}
\end{align}
with $\beta = 1-\frac{2\rho}{d-2}$ (see $(\ref{define rho})$ for the definition of $\rho$). 
\item[2.] When $c>0$, $C_{\text{SE}}(c)=C_{\text{SE}}(0)$, where $C_{\text{SE}}(0)$ is the sharp constant to the standard Sobolev embedding inequality
\[
\|f\|_{L^{\alpha^\star+2}} \leq C_{\text{SE}}(0) \|f\|_{\dot{H}^1}.
\]
Moreover, $C_{\text{SE}}(c)$ is never attained. Note that the constant $C_{\text{SE}}(0)$ is attained by functions $f(x)$ of a form $\lambda W_0(\mu x + y)$ for some $\lambda \in \C, y\in \R^d$ and $\mu>0$. However, if we restrict attention to radial functions, then the sharp constant for the radial Sobolev embedding associated to the energy-critical $(\nlsct)$, namely, 
\[
C_{\text{SE}}(c,\text{rad}):=\sup \left\{ \|f\|_{L^{\alpha^\star+2}} \div \|f\|_{\dot{H}^1_c} \ | \ f \in \dot{H}^1_c \backslash \{0\}, f \text{ radial} \right\}
\]
is attained by functions $f(x)$ of the form $\lambda W_c(\mu x)$ for some $\lambda \in \C$ and $\mu >0$.
\end{itemize}
Our last result concerns with the blowup for the energy-critical $(\nlsct)$.
\begin{theorem} \label{theorem blowup NLS energy-critical inverse square}
Let $d\geq 3$ and $c \ne 0$ be such that $c>-\frac{d^2+4d}{(d+2)^2} \lambda(d)$. Let $u_0 \in \dot{H}^1$ and $u$ be the corresponding solution to the energy-critical $(\nlsce)$ (i.e. $\alpha=\alpha^\star$). Suppose that either $E_c(u_0)<0$, or if $E_c(u_0) \geq 0$, we assume that $E_c(u_0) <E_{\cbar}(W_{\cbar})$ and $\|u_0\|_{\dot{H}^1_c} >\|W_{\cbar}\|_{\dot{H}^1_{\cbar}}$, where $\cbar = \min\{c,0\}$. If $xu_0 \in L^2$ or $u_0$ is radial, then the solution $u$ blows up in finite time.
\end{theorem}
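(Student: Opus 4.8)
The plan is to rule out global solutions by a convexity (concavity) argument for a (localized) virial functional, splitting according to whether the datum has finite variance or is merely radial.

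\emph{Step 1: the virial identity.} If $xu_0\in L^2$ then the elementary inequality $\|u_0\|_{L^2}^2\le\frac2d\|xu_0\|_{L^2}\|\nabla u_0\|_{L^2}$ (obtained by integrating $d\,|u_0|^2=-x\cdot\nabla|u_0|^2$) shows $u_0\in L^2$, hence $u_0\in\Sigma:=H^1\cap\{x\cdot\in L^2\}$, and the flow of $(\nlsct)$ keeps $u(t)\in\Sigma$ on the maximal interval. Set $V(t):=\int|x|^2|u(t,x)|^2\,dx\ge 0$. Since $|x|^{-2}$ is homogeneous of degree $-2$, one has $x\cdot\nabla(c|x|^{-2})=-2c|x|^{-2}$, so the potential contributes exactly $+8c\int|x|^{-2}|u|^2$, which together with the $8\|\nabla u\|_{L^2}^2$ term from the standard virial computation becomes $8\|u\|_{\dot{H}^1_c}^2$; combined with the energy-critical identity $\frac{d\alpha^\star}{\alpha^\star+2}=2$ this gives
\[
V''(t)=8\|u(t)\|_{\dot{H}^1_c}^2-8\|u(t)\|_{L^{\alpha^\star+2}}^{\alpha^\star+2}=\frac{16}{d-2}\Big(dE_c(u_0)-\|u(t)\|_{\dot{H}^1_c}^2\Big).
\]

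\emph{Step 2: the dichotomy, finite-variance data.} If $E_c(u_0)<0$, then $\|u\|_{\dot{H}^1_c}^2\ge0$ gives $V''(t)\le\frac{16d}{d-2}E_c(u_0)=:-\delta<0$. If $0\le E_c(u_0)<E_{\cbar}(W_{\cbar})$ and $\|u_0\|_{\dot{H}^1_c}>\|W_{\cbar}\|_{\dot{H}^1_{\cbar}}$, I use Theorem~\ref{theorem sobolev embedding inequality}: $C_{\text{SE}}(c)=C_{\text{SE}}(\cbar)$, $W_{\cbar}$ optimizes the corresponding Sobolev inequality, and $\|W_{\cbar}\|_{\dot{H}^1_{\cbar}}^2=\|W_{\cbar}\|_{L^{\alpha^\star+2}}^{\alpha^\star+2}$; hence $\|W_{\cbar}\|_{\dot{H}^1_{\cbar}}^2=C_{\text{SE}}(\cbar)^{-d}=dE_{\cbar}(W_{\cbar})=:y_0^2$ and $g(y):=\frac12y^2-\frac{1}{\alpha^\star+2}C_{\text{SE}}(\cbar)^{\alpha^\star+2}y^{\alpha^\star+2}$ attains its maximum $g(y_0)=E_{\cbar}(W_{\cbar})$ at $y_0$. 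By energy conservation and $\|u(t)\|_{L^{\alpha^\star+2}}\le C_{\text{SE}}(\cbar)\|u(t)\|_{\dot{H}^1_c}$ one gets $g(\|u(t)\|_{\dot{H}^1_c})\le E_c(u(t))=E_c(u_0)<g(y_0)$; since $\|u_0\|_{\dot{H}^1_c}>y_0$ and $t\mapsto\|u(t)\|_{\dot{H}^1_c}$ is continuous, it cannot cross $y_0$, so $\|u(t)\|_{\dot{H}^1_c}>y_0$ throughout, whence $V''(t)\le\frac{16d}{d-2}(E_c(u_0)-E_{\cbar}(W_{\cbar}))=:-\delta<0$. In either case, if $u$ were global then $0\le V(t)\le V(0)+V'(0)t-\tfrac\delta2t^2\to-\infty$, a contradiction (same argument backward in time); hence finite-time blowup. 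This covers the case $xu_0\in L^2$.

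\emph{Step 3: radial data.} For $u_0\in\dot{H}^1$ radial I replace $V$ by $V_R(t):=\int\varphi_R(x)|u(t,x)|^2\,dx$ with $\varphi_R(x)=R^2\varphi(x/R)$, where $\varphi\ge0$ is smooth, radial, supported in $|x|\le2$, equals $|x|^2$ on $|x|\le1$, and satisfies $\varphi''\le2$. Since $\varphi_R$ is compactly supported and $u(t)\in\dot{H}^1\hookrightarrow L^{\alpha^\star+2}$, $V_R(t)$ is finite and nonnegative. The virial computation gives $V_R''(t)$ equal to the Step-1 expression plus error terms supported on the annulus $\{R\le|x|\le2R\}$; for radial $u$ the Hessian error is $4\int(\varphi_R''-2)|\partial_r u|^2\le0$ (radiality is essential here), and the remaining errors are controlled by $\int_{R\le|x|\le2R}|u|^{\alpha^\star+2}$ and $(1+|c|)R^{-2}\int_{R\le|x|\le2R}|u|^2$, which one estimates through the radial Sobolev embedding and Hölder on the annulus; the inverse-square contribution is absorbed into the coercive term $8\|u(t)\|_{\dot{H}^1_c}^2$ using the sharp Hardy inequality, and this absorption is precisely where the hypothesis $c>-\frac{d^2+4d}{(d+2)^2}\lambda(d)=-\lambda(d)+\big(\tfrac{d-2}{d+2}\big)^2$ is required, as it quantifies the positivity of $P_c$ that remains after the potential term. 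Sharpening Step 2 to $dE_c(u_0)-\|u(t)\|_{\dot{H}^1_c}^2\le-c_0\|u(t)\|_{\dot{H}^1_c}^2$ with $c_0:=1-\max\{0,E_c(u_0)/E_{\cbar}(W_{\cbar})\}\in(0,1]$ (valid since $dE_c(u_0)<y_0^2\le\|u(t)\|_{\dot{H}^1_c}^2$), for $R$ large enough one obtains $V_R''(t)\le-\delta/2$ uniformly on the maximal interval, and the convexity argument closes as before.

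\emph{Main obstacle.} The variational step is routine given Theorem~\ref{theorem sobolev embedding inequality}, and the virial identity is a direct computation; the technical heart is the radial case, namely making the annulus error terms uniformly small in time even though $\|u(t)\|_{\dot{H}^1}$ need not be a priori bounded on the maximal interval. This requires a careful balance between the radial-Sobolev decay of the errors and the size of the coercive main term (and, for the potential error, the strengthened restriction on $c$); an alternative device is to approximate radial $\dot{H}^1$ data by $H^1$ data, for which $\|u(t)\|_{L^2}$ is conserved and controls $R^{-2}\int_{|x|\sim R}|u|^2$ uniformly.
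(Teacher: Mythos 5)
Your Steps 1--2 are correct and in fact streamline the paper's treatment of the finite-variance case: the identity $\|W_{\cbar}\|^2_{\dot H^1_{\cbar}}=C_{\text{SE}}(c)^{-d}=dE_{\cbar}(W_{\cbar})$ turns the coercivity estimate directly into $V''(t)\le \frac{16d}{d-2}\big(E_c(u_0)-E_{\cbar}(W_{\cbar})\big)<0$, avoiding the $\delta,\delta',\ep$ bookkeeping of $(\ref{refined blowup energy-critical estimate condition})$--$(\ref{refined virial estimate energy-critical})$; the observation that $xu_0\in L^2$ together with $u_0\in\dot H^1$ forces $u_0\in L^2$ is also a nice point the paper leaves implicit. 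The continuity argument via $g$ is exactly the paper's.

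Step 3, however, has two genuine problems. First, the mechanism you propose for closing the radial case does not work for data that are only in $\dot H^1$: H\"older on the annulus gives $R^{-2}\int_{R\le|x|\le 2R}|u|^2\lesssim\|u\|^2_{L^{2^*}}\lesssim\|u\|^2_{\dot H^1}$ with \emph{no} gain in $R$, and the radial Sobolev bound $\sup_{|x|>R}|x|^{\frac{d-2}{2}}|u|\lesssim\|u\|_{\dot H^1}$ yields $\int_{|x|>R}|u|^{\alpha^\star+2}\lesssim\|u\|^{\alpha^\star+2}_{\dot H^1}$, again with no smallness and with a power that cannot be absorbed by $-c_0\|u\|^2_{\dot H^1_c}$. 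The paper's Lemma~\ref{lem localized virial identity} closes these errors only by invoking mass conservation (the $s=\frac12$ radial Sobolev embedding $(\ref{radial sobolev embedding inverse square})$ and the bound $R^{-2}\int_{|x|>R}|u|^2\lesssim R^{-2}\|u_0\|_{L^2}^2$), i.e.\ it implicitly takes $u_0\in L^2$ in the radial case, consistent with the hypothesis ``$u_0\in H^1$ radial'' in Theorems~\ref{theorem global scattering blowup NLS energy-critical Kenig-Merle} and \ref{theorem global scattering blowup NLS energy-critical Killip-Visan}. Your fallback of approximating radial $\dot H^1$ data by $H^1$ data does not repair this: blowup of the approximating solutions does not imply blowup of the limiting one. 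Second, your attribution of the hypothesis $c>-\frac{d^2+4d}{(d+2)^2}\lambda(d)$ to a Hardy-inequality absorption in the virial estimate is wrong: the localized virial estimate holds for every $c>-\lambda(d)$ (the exterior potential terms are either sign-definite or $O(R^{-2})$), and the restriction on $c$ enters only through the local well-posedness theory of Proposition~\ref{prop local wellposedness energy-critical inverse square}, namely the equivalence $\dot W^{1,q}_c\sim\dot W^{1,q}$ at the Strichartz exponent $q=\frac{2d(d+2)}{d^2+4}$, as stated in Remark~\ref{rem blowup NLS energy-critical inverse square}.
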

\begin{rem} \label{rem blowup NLS energy-critical inverse square}
\begin{itemize}
\item[1.] As in Remark $\ref{rem incompatible blowup condition}$, the conditions $E_c(u_0) <E_{\cbar}(W_{\cbar})$ and $\|u_0\|_{\dot{H}^1_c} =\|W_{\cbar}\|_{\dot{H}^1_{\cbar}}$ are incompatible.
\item[2.] Theorem $\ref{theorem blowup NLS energy-critical inverse square}$ was stated in \cite{KillipMiaoVisanZhangZheng-energy} without proof. In this paper, we give a proof for this result. The restriction of $c$ comes from the local theory via Strichartz estimates (see Proposition $\ref{prop local wellposedness energy-critical inverse square}$). 
\item[3.] We expect that the global existence as well as scattering for the energy-critical $(\nlsct)$ hold true for $u_0 \in \dot{H}^1$ satisfying $E_c(u_0) <E_{\cbar}(W_{\cbar})$ and $\|u_0\|_{\dot{H}^1_c} <\|W_{\cbar}\|_{\dot{H}^1_{\cbar}}$. It is a delicate open problem. 
\end{itemize}
\end{rem}
In the case $c>0$, we have the following blowup result for radial solutions. 
\begin{theorem} \label{theorem blowup NLS energy-critical inverse square radial}
Let $d\geq 3$ and $c >0$. Let $u_0 \in \dot{H}^1$ radial and $u$ be the corresponding solution to the energy-critical $(\nlsce)$ (i.e. $\alpha=\alpha^\star$). Suppose that either $E_c(u_0)<0$, or if $E_c(u_0)\geq 0$, we assume that $E_c(u_0) <E_{c}(W_{c})$ and $\|u_0\|_{\dot{H}^1_c} >\|W_{c}\|_{\dot{H}^1_{c}}$. Then the solution $u$ blows up in finite time.
\end{theorem}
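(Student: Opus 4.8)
The plan is to run the standard scheme for energy-critical focusing problems: couple the variational characterization of the static solution $W_c$ provided by Theorem~\ref{theorem sobolev embedding inequality} with a (localized) virial/convexity argument, carried out in the norm $\|\cdot\|_{\dot H^1_c}$ instead of $\|\cdot\|_{\dot H^1}$. The local theory (Proposition~\ref{prop local wellposedness energy-critical inverse square}) yields a maximal solution $u\in C((-T_{\min},T_{\max});\dot H^1)$ with the usual blow-up alternative; radial symmetry is preserved by the flow (the potential is radial), and it suffices to show $T_{\max}<\infty$ or $T_{\min}<\infty$. Note that Theorem~\ref{theorem blowup NLS energy-critical inverse square radial} is the radial, $c>0$ counterpart of Theorem~\ref{theorem blowup NLS energy-critical inverse square}, with the optimizer $W_c$ of the radial Sobolev inequality replacing $W_{\cbar}$, and the argument applies to both. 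First, as in Theorem~\ref{theorem sobolev embedding inequality}, for $c>0$ the radial sharp constant $C_{\text{SE}}(c,\text{rad})$ is attained at $W_c$, which solves $-P_cW_c=|W_c|^{\alpha^\star}W_c$; pairing with $W_c$ and with $x\cdot\nabla W_c$ (the homogeneity $x\cdot\nabla(c|x|^{-2})=-2c|x|^{-2}$ makes the potential contribution behave as for $c=0$) gives the Pohozaev relations
\[
\|W_c\|_{\dot H^1_c}^2=\|W_c\|_{L^{\alpha^\star+2}}^{\alpha^\star+2},\qquad E_c(W_c)=\Big(\tfrac12-\tfrac1{\alpha^\star+2}\Big)\|W_c\|_{\dot H^1_c}^2=\tfrac1d\|W_c\|_{\dot H^1_c}^2,
\]
together with $C_{\text{SE}}(c,\text{rad})^{\alpha^\star+2}=\|W_c\|_{\dot H^1_c}^{-\alpha^\star}$.

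Next comes the trapping below the ground state. Set $f(y):=\tfrac12 y-\tfrac1{\alpha^\star+2}C_{\text{SE}}(c,\text{rad})^{\alpha^\star+2}y^{(\alpha^\star+2)/2}$, so that the sharp radial Sobolev inequality gives $E_c(w)\ge f(\|w\|_{\dot H^1_c}^2)$ for radial $w$. The function $f$ increases on $[0,\|W_c\|_{\dot H^1_c}^2]$, decreases afterwards, satisfies $\max f=f(\|W_c\|_{\dot H^1_c}^2)=E_c(W_c)>0$, and $f\to-\infty$. Applying this to $w=u(t)$ and using conservation of $E_c$: in the regime of the theorem (if $E_c(u_0)<0$ then $f(\|u_0\|_{\dot H^1_c}^2)\le E_c(u_0)<0$ already forces $\|u_0\|_{\dot H^1_c}^2>\|W_c\|_{\dot H^1_c}^2$; if $E_c(u_0)\ge0$ this is assumed), a continuity argument in $t$ shows $\|u(t)\|_{\dot H^1_c}^2$ cannot reach the level $\|W_c\|_{\dot H^1_c}^2$, so there is $\delta=\delta(u_0)>0$ with $\|u(t)\|_{\dot H^1_c}^2\ge(1+\delta)\|W_c\|_{\dot H^1_c}^2$ on the whole lifespan. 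Writing the energy identity as $\|u(t)\|_{L^{\alpha^\star+2}}^{\alpha^\star+2}=(\alpha^\star+2)\big(\tfrac12\|u(t)\|_{\dot H^1_c}^2-E_c(u_0)\big)$ and inserting the relations above gives, uniformly in $t$,
\[
\|u(t)\|_{L^{\alpha^\star+2}}^{\alpha^\star+2}-\|u(t)\|_{\dot H^1_c}^2=\tfrac{2}{d-2}\big(\|u(t)\|_{\dot H^1_c}^2-dE_c(u_0)\big)\ \ge\ c_1\,\|u(t)\|_{\dot H^1_c}^2\ \ge\ \eta\ >\ 0
\]
for suitable $c_1=c_1(\delta,d)>0$, $\eta>0$; when $E_c(u_0)<0$ this is immediate with $\eta=\tfrac{2d}{d-2}|E_c(u_0)|$ and the variational step is not needed.

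The virial argument then concludes. The energy-critical virial identity for $(\nlsct)$ reads $\frac{d^2}{dt^2}\int|x|^2|u|^2\,dx=8\|u(t)\|_{\dot H^1_c}^2-8\|u(t)\|_{L^{\alpha^\star+2}}^{\alpha^\star+2}$ (the coefficient $\frac{4d\alpha}{\alpha+2}$ equals $8$ at $\alpha=\alpha^\star$, and the potential assembles into $\|\cdot\|_{\dot H^1_c}^2$). If $xu_0\in L^2$, the right-hand side is $\le-8\eta<0$ by the previous step, and since $\int|x|^2|u|^2\ge0$ the solution cannot be global in both time directions; the $C^2$-regularity of $t\mapsto\int|x|^2|u|^2$ in the $\dot H^1$ setting is obtained by the usual regularization of data and weight. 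If $u_0$ is merely radial, replace $\int|x|^2|u|^2$ by $M_R(t):=\int\varphi_R|u|^2$ with $\varphi_R=R^2\varphi(x/R)$, where $\varphi$ is smooth, radial, bounded, equal to $|x|^2$ near the origin, radially nondecreasing, with $\varphi''\le0$ for $|x|\ge1$ and $\nabla^2\varphi\le2$. Then
\[
M_R''(t)\ \le\ 8\|u(t)\|_{\dot H^1_c}^2-8\|u(t)\|_{L^{\alpha^\star+2}}^{\alpha^\star+2}+\mathcal R_R(t),
\]
where $\mathcal R_R(t)$ is supported in $|x|\gtrsim R$; estimating it via the sharp Hardy inequality and the radial Sobolev embedding together with the conserved mass gives $|\mathcal R_R(t)|\le CR^{-\theta}M(u_0)\big(1+\|u(t)\|_{\dot H^1_c}^2\big)$ for some $\theta>0$. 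Combining with the lower bound $8\|u\|_{\dot H^1_c}^2-8\|u\|_{L^{\alpha^\star+2}}^{\alpha^\star+2}\le-8c_1\|u(t)\|_{\dot H^1_c}^2$ and choosing $R$ so large that $CR^{-\theta}M(u_0)\le4c_1$ yields $M_R''(t)\le-4c_1\|u(t)\|_{\dot H^1_c}^2\le-4c_1(1+\delta)\|W_c\|_{\dot H^1_c}^2=:-c_2<0$; since $0\le M_R(t)\le\|\varphi_R\|_{L^\infty}M(u_0)$ is bounded, this again forbids global existence, so $T_{\max}<\infty$ or $T_{\min}<\infty$.

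The main obstacle is the control of the nonlinear tail $\int_{|x|\ge R}|u|^{\alpha^\star+2}$ appearing in $\mathcal R_R(t)$: since $\alpha^\star+2=\frac{2d}{d-2}$ is precisely the Sobolev exponent, this term is borderline for the radial decay $|u(x)|\lesssim|x|^{-(d-2)/2}\|u\|_{\dot H^1}$ and cannot be absorbed by the radial Sobolev embedding alone. One must use a dimension-adapted radial decay estimate — $\sup_{x\ne0}|x|^{(d-2)/2}|u(x)|\lesssim\|u\|_{\dot H^1}$ for $d\ge4$, and the stronger $\sup_{x\ne0}|x|\,|u(x)|\lesssim\|u\|_{L^2}^{1/2}\|\nabla u\|_{L^2}^{1/2}$ for $d=3$ — together with conservation of mass, in order to extract the gain $R^{-\theta}$ ($\theta=2$, resp. $\theta=4$ when $d=3$) while keeping only a linear power of $\|u(t)\|_{\dot H^1_c}^2$, which the trapping step then reabsorbs into the main term. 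Everything else — the variational identities, the trapping, and the weighted virial — is routine and parallels the proof of Theorem~\ref{theorem global blowup NLS intercritical inverse square} and the case $c=0$, with $\|\cdot\|_{\dot H^1_c}$, $C_{\text{SE}}(c,\text{rad})$ and $W_c$ in place of their Euclidean counterparts.
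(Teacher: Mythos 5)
Your proposal is correct and follows essentially the same route as the paper: the sharp radial Sobolev inequality and the Pohozaev identities $(\ref{property W_c 1 radial})$--$(\ref{relation sharp sobolev embedding constant radial})$ for $W_c$, energy trapping above the threshold $\|W_c\|_{\dot H^1_c}$ by a continuity argument, the resulting uniform coercivity $8\|u\|_{\dot{H}^1_c}^2-8\|u\|_{L^{\alpha^\star+2}}^{\alpha^\star+2}\le -c_1\|u\|_{\dot{H}^1_c}^2$, and the standard (resp.\ localized, Strauss-type) virial estimate. The only cosmetic differences are a sign typo in the elliptic equation for $W_c$ and your dimension-split treatment of the tail term, which is equivalent to the paper's uniform use of the $s=\tfrac12$ radial Sobolev embedding in Lemma $\ref{lem localized virial identity}$.
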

Since $C_{\text{GN}}(c)>C_{\text{GN}}(c,\text{rad})$, we have from $(\ref{relation sharp sobolev embedding constant})$ and $(\ref{relation sharp sobolev embedding constant radial})$ that $E_{0}(W_0)<E_c(W_c)$. This shows that the blowup threshold for radial solutions is strictly larger than the one for non-radial solutions.\newline
\indent The paper is organized as follows. In Section 2, we recall some preliminary results related to the $(\nlsct)$. In Section 3, we recall the local well-posedness for the $(\nlsct)$ in the energy-subcritical and energy-critical cases. In Section 4, we recall the sharp Gagliardo-Nirenberg inequality and the sharp Sobolev embedding inequality for the $(\nlsct)$ by using the variational analysis. We next derive the standard virial identity as well as the localized virial estimate in Section 5. Section 6 is devoted to the proofs of global existence results. Finally, we give the proofs of blowup results in Section 7.
\section{Preliminaries} \label{section preliminaries}
\setcounter{equation}{0}
In the sequel, the notation $A \lesssim B$ denotes an estimate of the form $A\leq CB$ for some constant $C>0$. The notation $A \sim B$ means $A\lesssim B$ and $B \lesssim A$. The various constant $C$ may change from line to line. 
\subsection{Strichartz estimates} \label{subsection strichartz estimates}
Let $J \subset \R$ and $p, q \in [1,\infty]$. We define the mixed norm
\[
\|u\|_{L^p(J, L^q)} := \Big( \int_J \Big( \int_{\R^d} |u(t,x)|^q dx \Big)^{\frac{1}{q}} \Big)^{\frac{1}{p}}
\] 
with a usual modification when either $p$ or $q$ are infinity.
\begin{defi}
A pair $(p,q)$ is said to be \textbf{Schr\"odinger admissible}, for short $(p,q) \in S$, if 
\[
(p,q) \in [2,\infty]^2, \quad (p,q,d) \ne (2,\infty,2), \quad \frac{2}{p}+\frac{d}{q} = \frac{d}{2}.
\]
\end{defi}
We recall Strichartz estimates for the inhomogeneous Schr\"odinger equation with inverse-square potential. 
\begin{prop}[Strichartz estimates \cite{BurqPlanchonStalkerTahvildar-Zadeh, BoucletMizutani}] \label{prop strichartz esimates}
Let $d\geq 3$ and $c>-\lambda(d)$. Let $u$ be a solution to the inhomogeneous Schr\"odinger equation with inverse-square potential, namely
\[
u(t)= e^{itP_c}u_0 + \int_0^t e^{i(t-s)P_c} F(s) ds,
\]
for some data $u_0, F$. Then, for any $(p,q), (a,b) \in S$, 
\begin{align}
\|u\|_{L^p(\R, L^q)} \lesssim \|u_0\|_{L^2} + \|F\|_{L^{a'}(\R, L^{b'})}. \label{strichartz estimates}
\end{align}
Moreover, for any $\gamma \in \R$, $(p,q), (a,b) \in S$, 
\begin{align}
\|u\|_{L^p(\R, \dot{W}^{\gamma,q}_c)} \lesssim \|u_0\|_{\dot{H}^\gamma_c} + \|F\|_{L^{a'}_t(\R, \dot{W}^{\gamma, b'}_c)}. \label{strichartz estimates P_c}
\end{align}
Here $(a,a')$ and $(b, b')$ are conjugate pairs.
\end{prop}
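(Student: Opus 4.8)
The plan is to reduce everything to the dispersive decay of the propagator $e^{itP_c}$ and then feed this into the abstract Strichartz machinery of Keel--Tao. First I would carry out the (easy) reduction from the Sobolev-space estimate \eqref{strichartz estimates P_c} to the Lebesgue estimate \eqref{strichartz estimates}. Since $P_c$ is self-adjoint and nonnegative, the fractional power $\sqrt{P_c}^\gamma$ is well defined and commutes with $e^{itP_c}$. Setting $v:=\sqrt{P_c}^\gamma u$, the Duhamel formula for $u$ gives $v(t)=e^{itP_c}\sqrt{P_c}^\gamma u_0+\int_0^t e^{i(t-s)P_c}\sqrt{P_c}^\gamma F(s)\,ds$, so $v$ solves the same inhomogeneous equation with data $\sqrt{P_c}^\gamma u_0$ and forcing $\sqrt{P_c}^\gamma F$. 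Applying \eqref{strichartz estimates} to $v$ and unwinding the definitions $\|w\|_{\dot{W}^{\gamma,q}_c}=\|\sqrt{P_c}^\gamma w\|_{L^q}$ and $\|w\|_{\dot{H}^\gamma_c}=\|\sqrt{P_c}^\gamma w\|_{L^2}$ yields \eqref{strichartz estimates P_c}. Thus it suffices to prove \eqref{strichartz estimates}.

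For \eqref{strichartz estimates} I would invoke the Keel--Tao theorem, which produces the full range of admissible estimates (including the endpoint $(p,q)=(2,\tfrac{2d}{d-2})$, allowed here since $d\geq 3$) once two inputs are available: the energy bound $\|e^{itP_c}u_0\|_{L^2}=\|u_0\|_{L^2}$, which is immediate from the unitarity of $e^{itP_c}$ on $L^2$, together with an untruncated dispersive decay estimate $\|e^{itP_c}\|_{L^1\to L^\infty}\lesssim |t|^{-d/2}$. The homogeneous bound $\|e^{itP_c}u_0\|_{L^p(\R,L^q)}\lesssim\|u_0\|_{L^2}$ then follows from the $TT^*$ argument, and the inhomogeneous bound for arbitrary admissible pairs $(p,q),(a,b)\in S$ follows either simultaneously from the bilinear form of Keel--Tao (which also covers the double endpoint), or, away from the double endpoint, by combining the homogeneous estimate with the Christ--Kiselev lemma.

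The main obstacle is the dispersive input. The inverse-square potential is scaling-critical and singular at the origin, and the naive pointwise bound $|t|^{-d/2}$ is not available in the full range $c>-\lambda(d)$ (and is in fact false for part of it: the kernel of $e^{itP_c}$ degrades along the light cone for large $c$, while the low-frequency behaviour is delicate near the threshold $c\to-\lambda(d)$). Two robust routes circumvent this. Following Burq--Planchon--Stalker--Tahvildar-Zadeh \cite{BurqPlanchonStalkerTahvildar-Zadeh}, one decomposes into spherical harmonics; on each angular sector $P_c$ reduces to a Bessel-type radial operator whose propagator is given explicitly through Bessel functions, and one estimates the resulting oscillatory integrals uniformly in the angular parameter. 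Alternatively, following Bouclet--Mizutani \cite{BoucletMizutani}, one avoids pointwise kernel bounds entirely, deriving frequency-localized dispersive and smoothing estimates from uniform resolvent/heat-kernel bounds for $P_c$ together with a Littlewood--Paley calculus adapted to $P_c$.

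I would adopt the second route, since it is precisely what reaches the full range $c>-\lambda(d)$ asserted in the statement. The frequency-localized pieces are summed using the equivalence $\|u\|_{\dot{H}^\gamma_c}\sim\|u\|_{\dot{H}^\gamma}$ in the relevant range (a consequence of the sharp Hardy inequality \eqref{sharp hardy inequality}), which transfers the Littlewood--Paley square-function control from $-\Delta$ to $P_c$ and back. This recovers the global-in-time estimate \eqref{strichartz estimates} on the entire admissible set $S$, including the endpoint, for every $c>-\lambda(d)$, completing the argument after the reduction above.
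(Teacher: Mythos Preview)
The paper does not give a proof of this proposition. It records the result as known, citing \cite{BurqPlanchonStalkerTahvildar-Zadeh, BoucletMizutani}, and adds only a short historical remark: that Burq--Planchon--Stalker--Tahvildar-Zadeh obtained the non-endpoint estimates, while Bouclet--Mizutani later covered the full admissible set including the endpoint. The reader is explicitly referred to those works for details.

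Your sketch therefore goes well beyond what the paper itself provides. As an outline it is reasonable and broadly faithful to the cited literature: the reduction of \eqref{strichartz estimates P_c} to \eqref{strichartz estimates} via commuting $\sqrt{P_c}^\gamma$ with the propagator is correct and standard, and you correctly identify that the substantive content lies in the dispersive/resolvent input for $P_c$, with the two routes you describe matching the two references. One small caution: the final sentence about summing frequency-localized pieces via the equivalence $\|u\|_{\dot{H}^\gamma_c}\sim\|u\|_{\dot{H}^\gamma}$ is vague as stated, since \eqref{strichartz estimates} is an $L^2$-level estimate and does not itself require Sobolev-norm equivalence; the actual mechanism in \cite{BoucletMizutani} is a combination of uniform resolvent bounds and Kato smoothing rather than a transfer of Littlewood--Paley theory through Hardy. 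But at the level of a proof sketch this is a minor point, and in any case there is no discrepancy with the paper to report, since the paper offers no argument of its own.
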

Note that Strichartz estimates for the homogeneous nonlinear Schr\"odinger equation with inverse-square potential were first proved by Burq-Planchon-Stalker-Zadeh in \cite{BurqPlanchonStalkerTahvildar-Zadeh} except the endpoint $(p,q)=(2,\frac{2d}{d-2})$. Recently, Bouclet-Mizutani in \cite{BoucletMizutani} proved Strichartz estimates with the full set of Schr\"odinger admissible pairs for the homogeneous and inhomogeneous nonlinear Schr\"odinger equation with critical potentials including the inverse-square potential. We refer the reader to \cite{BurqPlanchonStalkerTahvildar-Zadeh, BoucletMizutani} for more details. 
\subsection{Equivalence of Sobolev norms} \label{subsection equivalent sobolev norms}
In this subsection, we recall the equivalence between Sobolev norms defined by $P_c$ and the ones defined by the usual Laplacian $-\Delta$. In \cite[Proposition 1]{BurqPlanchonStalkerTahvildar-Zadeh}, Burq-Planchon-Stalker-Zadel proved the following:
\[
\|u\|_{\dot{H}^\gamma_c} \sim \|u\|_{\dot{H}^\gamma}, \quad \forall \gamma \in [-1,1].
\]
Later, Zhang-Zheng in \cite{ZhangZheng} extended this result to homogeneous Sobolev spaces $\dot{W}^{\gamma,q}_c$ and $\dot{W}^{\gamma, q}$ for $0\leq \gamma\leq 1$ and a certain range of $q$. 
Recently, Killip-Miao-Visan-Zhang-Zheng extended these results to a more general setting. To state their result, let us introduce
\begin{align}
\rho:= \frac{d-2}{2}-\sqrt{\left( \frac{d-2}{2}\right)^2+c }. \label{define rho}
\end{align}
\begin{prop}[Equivalence of Sobolev norms \cite{KillipMiaoVisanZhangZheng-sobolev}] \label{prop equivalence sobolev spaces}
Let $d\geq 3, c\geq -\lambda(d), 0<\gamma<2$ and $\rho$ be as in $(\ref{define rho})$.
\begin{itemize}
\item[1.] If $1<q<\infty$ satisfies $\frac{\gamma+\rho}{d} <\frac{1}{q} <\min\left\{1, \frac{d-\rho}{d} \right\}$, then
\[
\|f\|_{\dot{W}^{\gamma, q}} \lesssim \|f\|_{\dot{W}^{\gamma,q}_c},
\]
for all $f \in C^\infty_0(\R^d\backslash \{0\})$.
\item[2.] If $1<q<\infty$ satisfies $\max\left\{\frac{\gamma}{d}, \frac{\rho}{d} \right\} <\frac{1}{q} <\min\left\{1, \frac{d-\rho}{d} \right\}$, then
\[
\|f\|_{\dot{W}^{\gamma, q}_c} \lesssim \|f\|_{\dot{W}^{\gamma,q}},
\]
for all $f \in C^\infty_0(\R^d\backslash \{0\})$.
\end{itemize}
\end{prop}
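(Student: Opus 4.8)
The plan is to reduce both inequalities to the $L^q$-boundedness of a single ratio operator and then to establish that boundedness through heat-kernel analysis. For part (1), setting $g := \sqrt{P_c}^{\,\gamma} f$, the claimed bound $\|f\|_{\dot{W}^{\gamma,q}} \lesssim \|f\|_{\dot{W}^{\gamma,q}_c}$ is equivalent to the statement that $(-\Delta)^{\gamma/2} P_c^{-\gamma/2}$ extends to a bounded operator on $L^q$; part (2) amounts, similarly, to the $L^q$-boundedness of the inverse operator $P_c^{\gamma/2}(-\Delta)^{-\gamma/2}$. Since $C^\infty_0(\R^d\setminus\{0\})$ is dense in each space, it suffices to prove these a priori bounds on that core.

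The engine of the proof is the two-sided Gaussian heat-kernel estimate for the semigroup $e^{-tP_c}$. For $d\geq 3$ and $c\geq-\lambda(d)$, with $\rho$ as in $(\ref{define rho})$, one has
\[
e^{-tP_c}(x,y)\sim t^{-d/2}\Big(1+\tfrac{\sqrt t}{|x|}\Big)^{\rho}\Big(1+\tfrac{\sqrt t}{|y|}\Big)^{\rho}\,e^{-|x-y|^2/(ct)},
\]
where the weight exponent $\rho$ is negative when $c>0$ (repulsive singularity) and positive when $-\lambda(d)<c<0$; these bounds are classical (Liskevich--Sobol, Milman--Semenov). As a preliminary step I would record the companion gradient bound for $\nabla_x e^{-tP_c}(x,y)$, which carries the same weights together with an extra factor $t^{-1/2}$, since this is what is needed to handle the application of $(-\Delta)^{\gamma/2}$.

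With these in hand, I would represent the fractional powers by subordination,
\[
P_c^{-\gamma/2}=c_\gamma\int_0^\infty t^{\gamma/2-1}e^{-tP_c}\,dt,
\]
and likewise for $-\Delta$, and then estimate the kernel of the composed operator by inserting the heat-kernel bounds and integrating in $t$, treating the two regimes $\sqrt t\lesssim|x|$ and $\sqrt t\gtrsim|x|$ separately. The net effect is that the kernel of $(-\Delta)^{\gamma/2}P_c^{-\gamma/2}$ is dominated by a Riesz-type kernel $|x-y|^{\gamma-d}$ modulated by powers $|x|^{-\rho}$, $|y|^{\rho}$ arising from the weights. The $L^q$-boundedness of such weighted fractional integrals --- via Stein--Weiss/Hardy-type inequalities together with an adapted Littlewood--Paley square function for $P_c$ --- holds precisely in the stated window for $1/q$: the upper constraint $1/q<\min\{1,(d-\rho)/d\}$ guarantees integrability of the weight $|x|^{-\rho}$ against $L^{q'}$ near the origin (and forces merely $q>1$ when $\rho<0$), while the lower constraints $1/q>(\gamma+\rho)/d$ in part (1) and $1/q>\max\{\gamma,\rho\}/d$ in part (2) guarantee convergence of the $t$-integral and $L^q$-boundedness of the Riesz piece.

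The main obstacle is that the composed kernel is not a standard Calder\'on--Zygmund kernel: the weights $|x|^{\mp\rho}$ destroy the translation structure and create genuine singularities at the origin, so one cannot simply invoke classical multiplier theory on $L^q$. The delicate part is therefore the sharp bookkeeping of the weight exponents through the subordination integral, which is exactly what pins down the endpoints of the admissible range of $q$; the separate tracking of the regimes $\sqrt t\lesssim|x|$ and $\sqrt t\gtrsim|x|$ in the heat-kernel bound is what produces the asymmetry between the lower thresholds in $(1)$ and $(2)$ and the dependence of the range on the sign of $c$ through $\rho$.
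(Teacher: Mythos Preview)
The paper does not prove this proposition at all: it is quoted verbatim from \cite{KillipMiaoVisanZhangZheng-sobolev} and used as a black box, so there is no ``paper's own proof'' to compare against. Your sketch is in fact a fair outline of what that external reference does --- reduce to $L^q$-boundedness of the ratio operators $(-\Delta)^{\gamma/2}P_c^{-\gamma/2}$ and $P_c^{\gamma/2}(-\Delta)^{-\gamma/2}$, feed in the Liskevich--Sobol/Milman--Semenov two-sided heat-kernel bounds with the weight $(1+\sqrt{t}/|x|)^{\rho}$, and control the resulting weighted fractional integrals via Hardy/Stein--Weiss together with a square function adapted to $P_c$.

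That said, your write-up is a plan, not a proof. The sentence ``the $L^q$-boundedness of such weighted fractional integrals \dots\ holds precisely in the stated window'' is the entire content of the result, and you have only asserted it. In the actual argument of \cite{KillipMiaoVisanZhangZheng-sobolev} the work is in building a Littlewood--Paley theory for $P_c$ (square-function estimates, a Mikhlin-type multiplier theorem) and then proving Hardy-type bounds for the Riesz operators $|x|^{-s}P_c^{-s/2}$ on $L^q$; the exponent windows drop out of the latter. Your heuristic that the upper and lower thresholds come from ``integrability of the weight near the origin'' and ``convergence of the $t$-integral'' is correct in spirit but does not by itself yield the sharp endpoints --- one really has to run the square-function comparison and the weighted Hardy inequality. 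If you want to turn this into a self-contained proof you would need to supply those two ingredients; otherwise, citing \cite{KillipMiaoVisanZhangZheng-sobolev} (as the present paper does) is the honest option.
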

\begin{rem} \label{rem equivalent sobolev norms}
\begin{itemize}
\item[1.] When $c>0$, we have $\rho<0$. Therefore, $\|u\|_{\dot{W}^{\gamma, q}}$ is equivalent to $\|u\|_{\dot{W}^{\gamma, q}_c}$ provided that $0<\gamma<2$ and
\begin{align}
\frac{\gamma}{d}<\frac{1}{q}<1 \quad \text{or} \quad 1<q<\frac{d}{\gamma}. \label{equivalent condition positive c}
\end{align}
\item[2.] When $-\lambda(d)\leq c<0$, we have $0<\rho<\frac{d-2}{2}$. Thus $\|u\|_{\dot{W}^{\gamma, q}} \sim \|u\|_{\dot{W}^{\gamma, q}_c}$ provided that $0<\gamma<2$ and
\begin{align}
\frac{\gamma+\rho}{d}<\frac{1}{q}<\frac{d-\rho}{d} \quad \text{or} \quad \frac{d}{d-\rho}<q<\frac{d}{\gamma+\rho}. \label{equivalent condition negative c}
\end{align} 
\end{itemize}
\end{rem}
We next recall the fractional derivative estimates due to Christ-Weinstein \cite{ChristWeinstein}. The equivalence of Sobolev spaces given in Proposition $\ref{prop equivalence sobolev spaces}$ allows us to use the same estimates for powers of $P_c$ with a certain set of exponents.  
\begin{lem}[Fractional derivative estimates] \label{lem fractional calculus}
\begin{itemize}
\item[1.] Let $\gamma \geq 0, 1<r<\infty$ and $1<p_1, q_1, p_2, q_2\leq \infty$ satisfying $
\frac{1}{r}=\frac{1}{p_1} + \frac{1}{q_1} = \frac{1}{p_2} + \frac{1}{q_2}$. Then
\[
\||\nabla|^\gamma (fg)\|_{L^r} \lesssim \|f\|_{L^{p_1}} \||\nabla|^\gamma g\|_{L^{q_1}} + \||\nabla|^\gamma f\|_{L^{p_2}} \|g\|_{L^{q_2}}. 
\]
\item[2.] Let $G \in C^1(\C), \gamma \in (0,1], 1<r, q<\infty$ and $1<p \leq \infty$ satisfying $\frac{1}{r} = \frac{1}{p} +\frac{1}{q}$. Then 
\[
\||\nabla|^\gamma G(f)\|_{L^r} \lesssim \|G'(f)\|_{L^p} \||\nabla|^\gamma f\|_{L^q}.
\]
\end{itemize}
\end{lem}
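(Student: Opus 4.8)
The plan is to prove the two estimates by Littlewood--Paley theory, treating the Leibniz rule of Part 1 via a paraproduct decomposition and the chain rule of Part 2 via the square-function characterization of fractional derivatives combined with the fundamental theorem of calculus. In both cases the hypotheses $1<r<\infty$ and $1<p,q<\infty$ are exactly what is needed to invoke the Littlewood--Paley square-function theorem and the $L^p$-boundedness of the Hardy--Littlewood maximal operator and the Riesz transforms.

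For Part 1, I would decompose both factors into Littlewood--Paley pieces, $f=\sum_j \Delta_j f$ and $g=\sum_k \Delta_k g$, with $\Delta_j$ localizing frequencies to $|\xi|\sim 2^j$, and write the product via Bony's paraproduct as $fg=\pi(f,g)+\pi(g,f)+\rho(f,g)$, separating the low--high, high--low and resonant frequency interactions. In the low--high piece $\pi(f,g)=\sum_j S_{j-2}f\,\Delta_j g$ the operator $|\nabla|^\gamma$ effectively falls on the high-frequency factor, and combining Bernstein's inequality with the square-function theorem (valid for $1<r<\infty$) and H\"older's inequality with $\tfrac1r=\tfrac1{p_1}+\tfrac1{q_1}$ produces the term $\|f\|_{L^{p_1}}\||\nabla|^\gamma g\|_{L^{q_1}}$; the high--low piece $\pi(g,f)$ is symmetric and yields $\||\nabla|^\gamma f\|_{L^{p_2}}\|g\|_{L^{q_2}}$. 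The resonant term $\rho(f,g)$ is estimated by moving the derivative onto either factor and summing the frequency-localized blocks. The cases in which one of the exponents equals $\infty$ are closed using the Fefferman--Stein vector-valued maximal inequality in place of the bare square-function bound.

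For Part 2, after reducing to $0<\gamma<1$ (the case $\gamma=1$ follows from $\nabla G(f)=G'(f)\nabla f$, H\"older, and the $L^r$-boundedness of the Riesz transforms), I would use the pointwise difference characterization
\[
\||\nabla|^\gamma h\|_{L^r} \sim \left\| \left( \int_{\R^d} \frac{|h(\cdot+y)-h(\cdot)|^2}{|y|^{d+2\gamma}}\,dy \right)^{1/2} \right\|_{L^r}, \qquad 1<r<\infty,
\]
applied to $h=G(f)$. The fundamental theorem of calculus gives the factorization $G(f)(x+y)-G(f)(x)=\big(f(x+y)-f(x)\big)\int_0^1 G'\!\big(f(x)+t(f(x+y)-f(x))\big)\,dt$, so that pointwise $|G(f)(x+y)-G(f)(x)|\le |f(x+y)-f(x)|\,H(x,y)$ with $H(x,y)=\int_0^1 |G'(f(x)+t(f(x+y)-f(x)))|\,dt$. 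Substituting into the square function and applying H\"older with $\tfrac1r=\tfrac1p+\tfrac1q$ reduces the estimate to separating the averaged factor $H(x,y)$ from the difference factor and controlling it against $\|G'(f)\|_{L^p}$, after which the $L^p$-boundedness of the maximal operator for $1<p\le\infty$ closes the bound into $\|G'(f)\|_{L^p}\||\nabla|^\gamma f\|_{L^q}$.

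The main obstacle will be this last point in Part 2: the factor $H(x,y)$ averages $|G'|$ along the entire segment joining $f(x)$ and $f(x+y)$, so the crude bound $H(x,y)\le \|G'\|_{L^\infty}$ only recovers the endpoint $p=\infty$ (with $q=r$). Extracting the finite-$p$ quantity $\|G'(f)\|_{L^p}$ instead requires genuinely separating the derivative factor from the difference factor through a maximal-function argument that exploits the $C^1$ regularity of $G$; this is the delicate step of the Christ--Weinstein proof, while the remaining estimates are routine applications of standard harmonic analysis.
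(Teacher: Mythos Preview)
The paper does not prove this lemma; it merely states it and attributes it to Christ--Weinstein \cite{ChristWeinstein}. So there is no proof in the paper to compare against, and your attempt goes well beyond what the paper itself does.

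That said, your outline for Part~1 via the Bony paraproduct is the standard route to the fractional Leibniz rule and is essentially correct, though you gloss over the resonant piece $\rho(f,g)$, where $|\nabla|^\gamma$ lands on a block with output frequency possibly much smaller than the input frequencies; this requires a genuine summation argument (e.g.\ an $\ell^1$-sum over the difference of dyadic scales) rather than just ``moving the derivative onto either factor.''

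Part~2 has a real gap that you yourself flag but do not close. The quantity $H(x,y)$ evaluates $G'$ at the intermediate values $f(x)+t(f(x+y)-f(x))$, which are \emph{not} of the form $G'(f(z))$ for any $z\in\R^d$; there is no maximal function of $G'\circ f$ that dominates this pointwise. Consequently the difference-quotient approach you sketch only yields the endpoint $p=\infty$. The Christ--Weinstein argument proceeds instead through a Littlewood--Paley/commutator framework: one writes $|\nabla|^\gamma G(f)$ in terms of frequency-localized pieces of $G(f)$, uses the mean-value theorem at the level of Littlewood--Paley blocks (where the derivative factor appears as $G'(f(x))$ rather than at intermediate values), and controls the remainder via Carleson-measure or square-function estimates. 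Without supplying that mechanism, your argument for finite $p$ in Part~2 is incomplete.
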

\subsection{Convergences of operators}
In this subsection, we recall the convergence of operators of \cite{KillipMiaoVisanZhangZheng-energy} arising from the fact that $P_c$ does not commute with translations. 
\begin{defi}
Suppose $(x_n)_{n\in \N} \subset \R^d$. We define
\begin{align}
\renewcommand*{\arraystretch}{1.2}
P^n_c:=-\Delta + \frac{c}{|x+x_n|^2}, \quad P^\infty_c:= \left\{
\begin{array}{cl}
-\Delta +\frac{c}{|x+x_\infty|^2} &\text{if } x_n \rightarrow x_\infty \in \R^d, \\
-\Delta &\text{if } |x_n|\rightarrow \infty.
\end{array}
\right. \label{definition operators}
\end{align}
\end{defi}
By definition, we have $P_c[f(x-x_n)] = [P^n_c f](x-x_n)$. The operator $P^\infty_c$ appears as a limit of the operators $P^n_c$ in the following senses:
\begin{lem}[Convergence of operators \cite{KillipMiaoVisanZhangZheng-energy}] \label{lem convergence operators}
Let $d\geq 3$ and $c\ne 0$ be such that $c>-\lambda(d)$. Suppose $(t_n)_{n\in\N} \subset \R$ satisfies $t_n \rightarrow t_\infty\in \R$, and $(x_n)_{n\in \N} \subset \R^d$ satisfies $x_n\rightarrow x_\infty \in \R^d$ or $|x_n|\rightarrow \infty$. Then,
\begin{align}
\lim_{n\rightarrow \infty} & \|P^n_c f - P^\infty_c f\|_{\dot{H}^{-1}} =0, \quad \text{for all} \quad f \in \dot{H}^1, \label{convergence in H minus 1} \\
\lim_{n\rightarrow \infty} & \|e^{-it_nP^n_c} f - e^{-it_\infty P^\infty_c} f\|_{\dot{H}^{-1}} =0, \quad \text{for all} \quad f \in \dot{H}^{-1}, \label{convergence evolution in H minus 1} \\
\lim_{n\rightarrow \infty} & \|\sqrt{P^n_c} f - \sqrt{P^\infty_c} f\|_{L^2} =0, \quad \text{for all} \quad f \in \dot{H}^1. \label{convergence in L2}
\end{align}
Furthermore, for any $(p,q) \in S$ with $p \ne 2$,
\begin{align}
\lim_{n\rightarrow \infty} \|e^{-itP^n_c} f - e^{-it P^\infty_c} f\|_{L^p(\R, L^q)} =0, \quad \text{for all} \quad f \in L^2. \label{convergence in strichart norm}
\end{align}
\end{lem}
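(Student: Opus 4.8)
The plan is to organize the four convergences as a hierarchy resting on a single structural observation: each $P^n_c$ is the conjugate of $P_c$ by the translation $x\mapsto x+x_n$, i.e. $P^n_c f=\big[P_c(f(\cdot-x_n))\big](\cdot+x_n)$ and likewise $e^{-itP^n_c}f=\big[e^{-itP_c}(f(\cdot-x_n))\big](\cdot+x_n)$. Since $-\Delta$ commutes with translations and all $L^q$-norms are translation invariant, the norm equivalences of Subsection~\ref{subsection equivalent sobolev norms} and the Strichartz constants of Proposition~\ref{prop strichartz esimates} hold for $P^n_c$ \emph{uniformly in $n$}; in particular $\|f\|_{\dot H^s_{P^n_c}}\sim\|f\|_{\dot H^s}$ for $s\in[-1,1]$ with $n$-independent constants, each $e^{-itP^n_c}$ is an $L^2$-isometry and uniformly bounded on $\dot H^{-1}$, and \eqref{strichartz estimates P_c} holds for $P^n_c$ with one fixed constant. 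Writing $V_n:=P^n_c-P^\infty_c$ (the multiplication operator $c\big(|x+x_n|^{-2}-|x+x_\infty|^{-2}\big)$ when $x_n\to x_\infty$, and $c|x+x_n|^{-2}$ when $|x_n|\to\infty$), everything is then driven by the base estimate \eqref{convergence in H minus 1}.

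First I would prove \eqref{convergence in H minus 1}. By duality it suffices to bound $|\langle V_nf,\phi\rangle|$ over $\phi$ with $\|\phi\|_{\dot H^1}\le1$. Cauchy--Schwarz and the shifted Hardy inequality \eqref{sharp hardy inequality} give $\int|x+x_n|^{-2}|\phi|^2\lesssim\|\phi\|_{\dot H^1}^2\le1$ uniformly, so $V_n:\dot H^1\to\dot H^{-1}$ is bounded uniformly in $n$, and the problem reduces to showing that the corresponding integral of $|f|^2$ against the potential (difference) tends to $0$. Approximating $f$ in $\dot H^1$ by $f_k\in C^\infty_0(\R^d\setminus\{0\})$ and absorbing the tail $f-f_k$ via the uniform Hardy bound, one reduces to $f$ supported away from the origin: when $|x_n|\to\infty$ the fixed support is eventually far from $-x_n$, while when $x_n\to x_\infty$ the integrand converges uniformly on the (fixed, bounded, away from $-x_\infty$) support. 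A standard $3\varepsilon$-argument closes \eqref{convergence in H minus 1}. Two clean consequences follow at once. Applying the resolvent identity $(P^n_c+\mu)^{-1}-(P^\infty_c+\mu)^{-1}=-(P^n_c+\mu)^{-1}V_n(P^\infty_c+\mu)^{-1}$ to $g\in L^2$, and using $\|(P^n_c+\mu)^{-1}\|_{\dot H^{-1}\to L^2}\lesssim\mu^{-1/2}$ together with $\|V_n(P^\infty_c+\mu)^{-1}g\|_{\dot H^{-1}}\to0$ (from \eqref{convergence in H minus 1}, since $(P^\infty_c+\mu)^{-1}g\in\dot H^1$), gives $P^n_c\to P^\infty_c$ in the strong resolvent sense. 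Moreover the quadratic-form identity $\|\sqrt{P^n_c}f\|_{L^2}^2=\int|\nabla f|^2+c\int|x+x_n|^{-2}|f|^2$ and the same density computation yield $\|\sqrt{P^n_c}f\|_{L^2}\to\|\sqrt{P^\infty_c}f\|_{L^2}$.

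To obtain \eqref{convergence in L2} I would combine this norm convergence with weak convergence, which in a Hilbert space forces strong convergence. For the weak part I use that strong resolvent convergence implies $\Phi(P^n_c)\to\Phi(P^\infty_c)$ strongly for $\Phi$ continuous and vanishing at infinity; taking $\Phi(\lambda)=\sqrt\lambda/(1+\lambda)$ and writing $\sqrt{P^n_c}\psi=\Phi(P^n_c)(1+P^n_c)\psi$, with $(1+P^n_c)\psi\to(1+P^\infty_c)\psi$ in $L^2$ for $\psi\in C^\infty_0(\R^d\setminus\{0\})$ (because $V_n\psi\to0$ in $L^2$ there), gives $\sqrt{P^n_c}\psi\to\sqrt{P^\infty_c}\psi$ for such $\psi$; pairing and using the uniform bound $\|\sqrt{P^n_c}f\|_{L^2}$ then identifies the weak limit as $\sqrt{P^\infty_c}f$ on a dense set. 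For \eqref{convergence evolution in H minus 1} I would split $e^{-it_nP^n_c}-e^{-it_\infty P^\infty_c}=\big(e^{-it_nP^n_c}-e^{-it_nP^\infty_c}\big)+\big(e^{-it_nP^\infty_c}-e^{-it_\infty P^\infty_c}\big)$; the second term vanishes by strong time-continuity of the fixed group on $\dot H^{-1}$, and for the first I use the Duhamel identity $e^{-it_nP^n_c}-e^{-it_nP^\infty_c}=-i\int_0^{t_n}e^{-i(t_n-s)P^n_c}V_n\,e^{-isP^\infty_c}\,ds$. For $f\in\mathscr{S}$ the integrand is $\lesssim\|e^{-isP^\infty_c}f\|_{\dot H^1}\lesssim\|f\|_{\dot H^1}$ uniformly and tends to $0$ pointwise in $s$ by \eqref{convergence in H minus 1}, so dominated convergence over the \emph{bounded} interval $[0,t_n]$ (with $\sup_n|t_n|<\infty$) gives the claim; one extends to all $f\in\dot H^{-1}$ by uniform $\dot H^{-1}$-boundedness and density.

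Finally, for \eqref{convergence in strichart norm} I would reduce to $f$ in a dense subclass of $L^2$ using the uniform Strichartz bound $\|e^{-itP^n_c}f\|_{L^p(\R,L^q)}\lesssim\|f\|_{L^2}$ and a $3\varepsilon$-argument. For such $f$, Trotter--Kato (a consequence of the strong resolvent convergence above) gives $e^{-itP^n_c}f\to e^{-itP^\infty_c}f$ in $L^2$ pointwise in $t$, uniformly on compact $t$-intervals, which upgrades to pointwise-in-$t$ convergence in $L^q$; dominated convergence in $t$ over $\R$ then yields \eqref{convergence in strichart norm}. I expect the \textbf{main obstacle} to sit precisely here: producing a fixed $L^p(\R)$-majorant for $\|e^{-itP^n_c}f\|_{L^q}$ that is uniform in $n$, i.e. controlling the global-in-time space-time tails equicontinuously in $n$. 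The natural route is to split the time axis into $|t|\le T$ (handled by the bounded-interval Duhamel argument above) and $|t|>T$ (handled by the uniform Strichartz tails, which become small as $T\to\infty$), and then let $T\to\infty$; this is exactly the step where the exclusion of the endpoint $p=2$ is used, since it provides the room in the inhomogeneous estimate \eqref{strichartz estimates P_c} needed to feed the endpoint-free convergence of the free evolutions into the space-time bound without loss.
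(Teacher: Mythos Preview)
The paper does not prove this lemma; immediately after stating it, the text reads ``We refer the reader to \cite[Lemma 3.3]{KillipMiaoVisanZhangZheng-energy} for the proof of Lemma~\ref{lem convergence operators}.'' There is thus no in-paper argument to compare your proposal against; the proof lives entirely in the cited reference.

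That said, your outline follows the expected strategy: establish \eqref{convergence in H minus 1} by duality, Hardy and density; upgrade via the resolvent identity to strong resolvent convergence; deduce \eqref{convergence in L2} from weak convergence plus convergence of norms; and get \eqref{convergence evolution in H minus 1} from a bounded-interval Duhamel expansion and density in $\dot H^{-1}$. One small slip: in the case $x_n\to x_\infty$ the singularities of $V_n$ sit at $-x_n$ and $-x_\infty$, not at the origin, so your approximating class should be $C^\infty_0(\R^d\setminus\{-x_\infty\})$ rather than $C^\infty_0(\R^d\setminus\{0\})$ for the claimed uniform convergence on the support to hold. The more substantive point is the one you yourself flag as the ``main obstacle'' for \eqref{convergence in strichart norm}: the sentence ``handled by the uniform Strichartz tails, which become small as $T\to\infty$'' is not justified as written. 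The uniform Strichartz bound gives only $\|e^{-itP^n_c}f\|_{L^p(|t|>T,L^q)}\lesssim\|f\|_{L^2}$ uniformly in $n$, not uniform-in-$n$ decay of that tail as $T\to\infty$, so the time-splitting alone does not close. One needs an extra ingredient producing an $n$-independent $L^p_t$ majorant on a dense class of $f$ (for instance a pointwise-in-$t$ $L^{q'}\to L^q$ bound for $e^{-itP^n_c}$, which is inherited from the corresponding bound for $e^{-itP_c}$ by your conjugation-by-translation observation). Your sketch does not supply this, and you should consult the cited source for the precise mechanism.
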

We refer the reader to \cite[Lemma 3.3]{KillipMiaoVisanZhangZheng-energy} for the proof of Lemma $\ref{lem convergence operators}$. 
\section{Local well-posedness}
In this section, we study the local well-posedness for the $(\nlsct)$ in the energy-subcritical and energy-critical cases. To our knowledge, there are two possible ways to show the local well-posedness in $H^1$ for the classical nonlinear Schr\"odinger equation (NLS$_0$): the Kato's method and the energy method. The Kato's method is based on the contraction mapping principle using Strichartz estimates. This method is very effective to study the (NLS$_0$) in general Sobolev spaces. The energy method, on the other hand, does not use Strichartz estimates and only allows to prove the existence of solutions in the energy space. But, on one hand, it provides a useful tool to study the (NLS$_0$) in a general domain $\Omega$ where Strichartz estimates are not available in general. We refer the reader to \cite{Cazenave} for more details. In the presence of the singular potential $c|x|^{-2}$, even though Strichartz estimates are available (see \cite{BurqPlanchonStalkerTahvildar-Zadeh,BoucletMizutani}), the Kato's method does not allow to study the $(\nlsct)$ in the energy space with the full range $c>-\lambda(d)$. The reason for this is that the homogeneous Sobolev spaces $\dot{W}^{\gamma, q}_c$ and the usual ones $\dot{W}^{\gamma, q}$ are equivalent only in a certain range of $\gamma$ and $q$ (see Subsection $\ref{subsection equivalent sobolev norms}$). Moreover, Okazawa-Suzuki-Yokota in \cite{OkazawaSuzukiYokota-energy} pointed out that the energy method developed by Cazenave is not enough to study the $(\nlsct)$ in the energy space. They thus formulated an improved energy method to treat the equation. More precisely, they proved the following:
\begin{theorem}[\cite{OkazawaSuzukiYokota-energy}] \label{theorem energy method Okazawa-Suzuki-Yokota}
Let $d\geq 3, c >-\lambda(d)$. Then the $(\nlsce)$ is well posed in $H^1$:
\begin{itemize}
\item locally if $0 \leq \alpha <\alpha^\star$,
\item globally if $0\leq \alpha <\alpha_\star$.
\end{itemize}
Here $\alpha_\star, \alpha^\star$ are given in $(\ref{define alpha star})$. 
\end{theorem}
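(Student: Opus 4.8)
The plan is to prove this local and global well-posedness by the \emph{energy method}, in the form of \cite{OkazawaSuzukiYokota-energy}: one first builds an $H^1$-solution by regularization and compactness — thereby bypassing the $\dot{W}^{1,q}$-level Strichartz estimates, whose validity would constrain the range of $c$ — and then separately handles the blow-up alternative, uniqueness and continuous dependence. To begin with, since $c>-\lambda(d)$ the sharp Hardy inequality $(\ref{sharp hardy inequality})$ gives $\|u\|_{\dot{H}^1_c}\sim\|u\|_{\dot{H}^1}$, so that $H^1_c=H^1$ with equivalent norms, $P_c$ (the Friedrichs extension) is a non-negative self-adjoint operator on $L^2$, and $(e^{-itP_c})_{t\in\R}$ is a unitary group on $L^2$ and an isometry on each $\dot{H}^s_c$. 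I would construct approximate solutions $u_m$ by replacing the nonlinearity $g(z):=|z|^{\alpha}z$ with a smooth truncation $g_m$ (with primitive $G_m$) for which the problem $i\partial_t u_m-P_cu_m=-g_m(u_m)$, $u_m(0)=u_0\in H^1$, is globally well posed in $C(\R;H^1)\cap C^1(\R;H^{-1})$ by the contraction principle applied to the Duhamel formula, and which conserves the mass $M(u_m)=M(u_0)$ and the energy $E_c^m(u_m):=\tfrac{1}{2}\|u_m\|_{\dot{H}^1_c}^2-\int G_m(u_m)$, with $E_c^m(u_0)\to E_c(u_0)$. Inserting these two conservation laws into the Gagliardo--Nirenberg inequality (legitimate since $\alpha<\alpha^\star$, i.e. $\alpha+2<\tfrac{2d}{d-2}$) bounds $\|u_m(t)\|_{H^1}$, uniformly in $m$, on an interval $[-T,T]$ with $T=T(\|u_0\|_{H^1})>0$; and when $\alpha<\alpha_\star$ the $\dot{H}^1$-exponent appearing there is $\tfrac{d\alpha}{2}<2$, so Young's inequality absorbs it and produces the \emph{global} bound $\sup_{t\in\R}\|u_m(t)\|_{H^1}\le C(\|u_0\|_{H^1})$. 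The equation also bounds $\partial_t u_m$ in $L^\infty_tH^{-1}$ on these intervals.

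I would then pass to the limit $m\to\infty$. By Banach--Alaoglu, along a subsequence $u_m\overset{\ast}{\rightharpoonup}u$ in $L^\infty_tH^1$ and $\partial_t u_m\overset{\ast}{\rightharpoonup}\partial_t u$ in $L^\infty_tH^{-1}$; the Aubin--Lions--Simon lemma applied on balls, together with a diagonal extraction, gives $u_m\to u$ in $C_tL^2_{\mathrm{loc}}$ and a.e., so that $g_m(u_m)\to g(u)$ and $P_cu_m\to P_cu$ in the sense of distributions, and $u\in C_w([-T,T];H^1)\cap C([-T,T];L^2)$ solves $(\nlsct)$ and conserves mass and energy. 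Weak continuity in $H^1$, combined with the continuity of $t\mapsto M(u(t))$ and $t\mapsto E_c(u(t))$, upgrades this to $u\in C([-T,T];H^1)$. Since the solution continues past $T$ as long as $\|u(t)\|_{H^1}$ stays finite, this gives local well-posedness for $0\le\alpha<\alpha^\star$; and for $0\le\alpha<\alpha_\star$ the a priori bound $\sup_t\|u(t)\|_{H^1}\le C(\|u_0\|_{H^1})$ — now valid directly for $u$, by conservation of mass and energy together with Gagliardo--Nirenberg and the absorption $\tfrac{d\alpha}{2}<2$ — forbids finite-time blow-up and delivers a global solution.

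The step I expect to be the main obstacle is uniqueness (and, in the same vein, continuous dependence on the data). Let $u,v\in C([0,T];H^1)$ solve $(\nlsct)$ with the same datum, and set $w=u-v$. Pairing the difference equation with $w$ in the $H^{-1}$--$H^1$ duality eliminates the linear term ($\langle P_cw,w\rangle$ being real) and leaves $\tfrac{1}{2}\tfrac{d}{dt}\|w\|_{L^2}^2\le\int\big(|u|^\alpha+|v|^\alpha\big)|w|^2\,dx$; by H\"older and $H^1\hookrightarrow L^{\alpha^\star+2}$ the right-hand side is $\lesssim\big(\|u\|_{H^1}^\alpha+\|v\|_{H^1}^\alpha\big)\|w\|_{L^2}^{2(1-\theta)}\|w\|_{H^1}^{2\theta}$ with $\theta=\tfrac{(d-2)\alpha}{4}\in[0,1)$. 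Since $w\in L^\infty_tH^1$, this collapses to $\tfrac{d}{dt}\|w\|_{L^2}^2\lesssim\|w\|_{L^2}^{2(1-\theta)}$, which closes immediately by Gronwall whenever $2(1-\theta)\ge1$, i.e. $\alpha\le\tfrac{2}{d-2}$ — in particular for the entire range $0\le\alpha<\alpha^\star$ when $d=3$; but once $\tfrac{2}{d-2}<\alpha<\alpha^\star$ the exponent $2(1-\theta)<1$ renders this estimate inconclusive. For that remaining range I would instead run a fixed-point-type contraction in Strichartz norms \emph{at the $L^2$ level}: starting from the Duhamel representation $w(t)=-i\int_0^t e^{-i(t-s)P_c}\big(g(u)-g(v)\big)\,ds$, the Strichartz estimates of Proposition \ref{prop strichartz esimates} — which hold for the full admissible range and for \emph{every} $c>-\lambda(d)$, precisely because no derivative falls on the nonlinearity, so the equivalence of Sobolev norms is never invoked — together with H\"older in time give, on any subinterval $I\subset[0,T]$ of length $\tau=\tau(\sup_t\|u(t)\|_{H^1})$, an inequality of the form $\|w\|_{L^p(I,L^{\alpha+2})}\le\tfrac{1}{2}\|w\|_{L^p(I,L^{\alpha+2})}$ for a Schr\"odinger admissible pair $(p,\alpha+2)$; hence $w\equiv0$ on $I$, and iterating over consecutive intervals of length $\tau$ forces $w\equiv0$ on $[0,T]$. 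The same scheme, comparing two solutions issued from nearby data, yields continuous dependence in $C([0,T];H^1)$. What makes this last step delicate is precisely that it secures uniqueness for $\alpha$ as large as $\alpha^\star$ while keeping the whole range $c>-\lambda(d)$ — possible because, unlike Kato's contraction scheme for \emph{existence}, the uniqueness and stability arguments require only the $L^2$-level Strichartz inequality, where no equivalence of Sobolev norms enters.
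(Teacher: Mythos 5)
The paper itself gives no proof of this statement --- it simply cites \cite[Theorem 5.1]{OkazawaSuzukiYokota-energy} --- so your outline is measured against the strategy of that reference, which it broadly emulates. Two of your steps, however, do not hold as written. The first concerns the uniform-in-$m$ local bound: you claim that inserting mass and energy conservation into Gagliardo--Nirenberg bounds $\|u_m(t)\|_{H^1}$ uniformly on some interval $[-T,T]$ with $T=T(\|u_0\|_{H^1})>0$. But conservation laws are time-independent, so any bound they produce is either global or vacuous. Writing $X(t)=\|u_m(t)\|_{\dot{H}^1_c}$, the inequality $\tfrac12 X^2\le E+CM^{\sigma'}X^{d\alpha/2}$ confines $X$ only when $\tfrac{d\alpha}{2}<2$ (i.e.\ $\alpha<\alpha_\star$, where it yields the \emph{global} bound via Young, as you say), or when the data are small enough that $X(0)$ lies in the bounded component of the admissible set; for large data with $\alpha_\star\le\alpha<\alpha^\star$ it yields nothing --- indeed Theorems \ref{theorem global blowup NLS mass-critical inverse square} and \ref{theorem global blowup NLS intercritical inverse square} exhibit data of finite mass and energy whose $\dot{H}^1$ norm becomes unbounded. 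The uniform local lifespan in the mass-critical and supercritical range therefore requires a genuinely local differential inequality with constants independent of $m$ (note that $g$ does not map $H^1$ into $H^1$ and $D(P_c)\ne H^2$ in general, so this is not routine); supplying it, together with the justification of energy conservation for the approximating problems that you assert without proof, is exactly the content of the ``improved energy method'' of Okazawa--Suzuki--Yokota that Remark \ref{rem energy method Okazawa-Suzuki-Yokota} alludes to. This is the heart of the theorem and is missing from your sketch.

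The second problem is the Gronwall step in your uniqueness argument. Setting $y=\|w\|^2_{L^2}$, your estimate reads $y'\le Cy^{1-\theta}$ with $y(0)=0$, and this forces $y\equiv0$ only under the Osgood condition $\int_0 s^{-(1-\theta)}\,ds=\infty$, i.e.\ $\theta\le0$. Your criterion $2(1-\theta)\ge1$ admits, e.g., $y'\le Cy^{1/2}$, which has the nonzero solution $y=(Ct/2)^2$ emanating from zero, so the argument is inconclusive for every $\alpha>0$. (Also, for $d=3$ one has $\tfrac{2}{d-2}=2<4=\alpha^\star$, so even the range you claim for this prong is not the full subcritical range.) The error is harmless only because your second argument --- the contraction in the admissible pair $(p,\alpha+2)$ with $p=\tfrac{4(\alpha+2)}{d\alpha}$, using the $L^2$-level Strichartz estimates of Proposition \ref{prop strichartz esimates}, which indeed hold for every $c>-\lambda(d)$ --- works on the \emph{entire} range $0\le\alpha<\alpha^\star$, the H\"older-in-time gain $|I|^{1-2/p}$ being positive precisely when $\alpha<\tfrac{4}{d-2}$. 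You should run that argument throughout and discard the Gronwall prong.
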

We refer the reader to \cite[Theorem 5.1]{OkazawaSuzukiYokota-energy} for the proof of this result.
\begin{rem} \label{rem energy method Okazawa-Suzuki-Yokota}
\begin{itemize}
\item[1.] The energy method developed by Okazawa-Suzuki-Yokota is only available for the energy-subcritical case (i.e. $\alpha<\alpha^\star$) and not for the energy-critical case $\alpha=\alpha^\star$. The last case should rely on Kato's method (see Proposition $\ref{prop local wellposedness energy-critical inverse square}$ below).
\item[2.] Theorem $\ref{theorem energy method Okazawa-Suzuki-Yokota}$ tells us that $H^1$ blowup solutions may occur only on $\alpha_\star \leq \alpha \leq \alpha^\star$. 
\item[3.] The same well-posedness for the $(\nlsct)$ as in Theorem $\ref{theorem energy method Okazawa-Suzuki-Yokota}$ holds true when one replaces $\R^d$ by a bounded domain $\Omega$ (see again \cite{OkazawaSuzukiYokota-energy}). In this consideration, Suzuki in \cite{Suzuki-bounded} proved a similar result for the $(\nlsct)$ on $\Omega$ with $c=\lambda(d)$. 
\end{itemize}
\end{rem}
We now consider the energy-critical case $\alpha=\alpha^\star$. 
\begin{prop} \label{prop local wellposedness energy-critical inverse square}
Let $d\geq 3, c>-\frac{d^2+4d}{(d+2)^2} \lambda(d)$ and $\alpha=\alpha^\star$. Then for every $u_0 \in H^1$, there exist $T_*, T^*\in (0, \infty]$ and a unique strong $H^1$ solution to the $(\nlsce)$ defined  on the maximal interval $(-T_*, T^*)$. Moreover, if $\|u_0\|_{\dot{H}^1}<\ep$ for some $\ep>0$ small enough, then $T_* = T^*=\infty$ and the solution is scattering in $H^1$, i.e. there exist $u_0^\pm \in H^1$ such that
\[
\lim_{t\rightarrow \pm \infty} \|u(t) - e^{-itP_c} u^\pm_0\|_{H^1} = 0.
\] 
\end{prop}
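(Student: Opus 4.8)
The plan is to prove this by the standard Kato-type fixed-point argument for the energy-critical problem, at regularity one, using the Strichartz estimates of Proposition~\ref{prop strichartz esimates} together with the equivalence of Sobolev norms from Proposition~\ref{prop equivalence sobolev spaces}; the hypothesis $c>-\frac{d^2+4d}{(d+2)^2}\lambda(d)$ will be exactly the range for which that equivalence is available at the admissible exponents forced by the energy-critical nonlinearity. Fix $\alpha=\alpha^\star=\frac{4}{d-2}$ and set
\[
q_0:=\frac{2(d+2)}{d-2},\qquad r_0:=\frac{2d(d+2)}{d^2+4},
\]
so that $(q_0,r_0)\in S$, the classical Sobolev embedding $\dot W^{1,r_0}\hookrightarrow L^{q_0}$ holds, and $(2,\frac{2d}{d-2})$ is the endpoint admissible pair whose dual exponents will host the nonlinearity. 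On a time interval $I\ni 0$ I would look for a fixed point of the Duhamel map
\[
\Phi(u)(t):=e^{-itP_c}u_0+i\int_0^t e^{-i(t-s)P_c}\big(|u|^\alpha u\big)(s)\,ds
\]
in a small ball of the space $X(I)$ of all $u\in L^\infty(I,H^1)$ with $\|u\|_{L^{q_0}(I,\dot W^{1,r_0}_c)}<\infty$, endowed with the weaker metric $\|u-v\|_{L^{q_0}(I,L^{r_0})}$ (as usual for a critical problem, since the nonlinear estimate below is scale-invariant and carries no power of $|I|$).

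The core is the nonlinear estimate. Applying~\eqref{strichartz estimates P_c} with $\gamma=1$, the pair $(q_0,r_0)$ in the homogeneous slot and $(2,\frac{2d}{d-2})$ in the inhomogeneous one, gives
\[
\|\Phi(u)\|_{L^{q_0}(I,\dot W^{1,r_0}_c)}\lesssim \|u_0\|_{\dot H^1_c}+\big\||u|^\alpha u\big\|_{L^2(I,\dot W^{1,2d/(d+2)}_c)}.
\]
I would then: use Proposition~\ref{prop equivalence sobolev spaces} and Remark~\ref{rem equivalent sobolev norms} to replace $\dot W^{1,2d/(d+2)}_c$ by $\dot W^{1,2d/(d+2)}$ (which holds for every $c>-\lambda(d)$); apply the fractional chain rule Lemma~\ref{lem fractional calculus}(2) with $G(z)=|z|^\alpha z\in C^1(\C)$, $|G'(z)|\lesssim|z|^\alpha$, together with Hölder in $x$ and the embedding $\dot W^{1,r_0}\hookrightarrow L^{q_0}$, to get $\big\||\nabla|(|u|^\alpha u)\big\|_{L^{2d/(d+2)}_x}\lesssim \|u\|^{\alpha}_{L^{q_0}_x}\||\nabla|u\|_{L^{r_0}_x}$; take the $L^2_t$-norm and use $2(\alpha+1)=q_0$ to reach $\big\||\nabla|(|u|^\alpha u)\big\|_{L^2(I,L^{2d/(d+2)})}\lesssim \||\nabla|u\|^{\alpha+1}_{L^{q_0}(I,L^{r_0})}$; and convert $|\nabla|u$ back to $\dot W^{1,r_0}_c$ by the same equivalence. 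Using $\|u_0\|_{\dot H^1_c}\sim\|u_0\|_{\dot H^1}$ from the sharp Hardy inequality~\eqref{sharp hardy inequality} to open the iteration, one obtains that $\Phi$ is a contraction on a small ball of $X(I)$ once $\|e^{-itP_c}u_0\|_{L^{q_0}(I,\dot W^{1,r_0}_c)}$ is small; since $e^{-itP_c}u_0\in L^{q_0}(\R,\dot W^{1,r_0}_c)$ by global Strichartz, this is automatic for any $u_0\in H^1$ on a short enough $I$, and holds on $I=\R$ when $\|u_0\|_{\dot H^1}<\varepsilon$. Running the same estimates with the $L^2$-based Strichartz norms via~\eqref{strichartz estimates} (available for all admissible pairs and all $c>-\lambda(d)$) controls the full $H^1$-norm; uniqueness in $X(I)$, the blow-up alternative, and the maximal interval $(-T_*,T^*)$ follow from the standard continuation/uniqueness machinery. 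In the small-data case the resulting global finiteness of the Strichartz norms forces $\{e^{itP_c}u(t)\}$ to converge in $H^1$ as $t\to\pm\infty$, which yields the scattering states $u_0^\pm$.

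The single genuinely delicate point — and the sole source of the hypothesis on $c$ — is that the scheme needs the equivalence of the $\dot W^{1,r_0}$ and $\dot W^{1,r_0}_c$ norms (the equivalence at the dual exponent $\frac{2d}{d+2}$, as noted, holds for all $c>-\lambda(d)$). With $\rho=\frac{d-2}{2}-\sqrt{\left(\frac{d-2}{2}\right)^2+c}$ as in~\eqref{define rho} and $\frac1{r_0}=\frac{d^2+4}{2d(d+2)}$, the binding inequality among the exponent conditions of Proposition~\ref{prop equivalence sobolev spaces} is $\frac{1+\rho}{d}<\frac1{r_0}$, i.e. $\rho<\frac{d(d-2)}{2(d+2)}$; rewriting this as $\sqrt{\left(\frac{d-2}{2}\right)^2+c}>\frac{d-2}{d+2}$ and squaring (both sides are nonnegative for $d\ge3$) gives precisely $c>-\frac{d^2+4d}{(d+2)^2}\lambda(d)$, and one then checks that the remaining exponent conditions of Proposition~\ref{prop equivalence sobolev spaces} at $r_0$ and at $\frac{2d}{d+2}$ hold automatically (when $c>0$ one has $\rho<0$ and nothing needs checking). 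I expect the main obstacle to be purely bookkeeping: carrying out these exponent verifications, and observing that although $|u|^\alpha u$ with $\alpha=\frac{4}{d-2}$ fails to be twice differentiable for $d\ge 6$, it is still $C^1$ with $|G'(z)|\lesssim|z|^\alpha$, so that Lemma~\ref{lem fractional calculus}(2) — valid for $\gamma\in(0,1]$ — applies uniformly in $d\ge3$; the remainder of the argument is routine.
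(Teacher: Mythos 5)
Your proposal is correct and follows essentially the same route as the paper: the same admissible pair $(\frac{2(d+2)}{d-2},\frac{2d(d+2)}{d^2+4})$, the same contraction space with the weaker $L^p_tL^q_x$ metric, the same chain of Strichartz estimates, norm equivalence, fractional chain rule and Sobolev embedding for the nonlinear estimate, and the same identification of the threshold on $c$ as coming from the equivalence $\dot W^{1,r_0}_c\sim\dot W^{1,r_0}$. The only difference is that you carry out explicitly the exponent computation showing $\frac{1+\rho}{d}<\frac{1}{r_0}\Leftrightarrow c>-\frac{d^2+4d}{(d+2)^2}\lambda(d)$, which the paper leaves as "easy to check"; that verification is accurate.
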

Before giving the proof of this result, let us introduce some notations. In this section, we denote
\[
p = \frac{2(d+2)}{d-2}, \quad q=\frac{2d(d+2)}{d^2+4}.
\]
It is easy to check that $(p,q)$ is a Schr\"odinger admissible pair and 
\[
\frac{1}{p}=\frac{1}{q}-\frac{1}{d}.
\]
The last equality allows us to use the Sobolev embedding $\dot{W}^{1,q} \subset L^p$. 
Moreover, in the view of $(\ref{equivalent condition positive c})$ and $(\ref{equivalent condition negative c})$, it is easy to check that $\dot{W}^{1,q}_c$ is equivalent to $W^{1,q}$ provided that $c>-\frac{d^2+4d}{(d+2)^2} \lambda(d)$.\newline
\textit{Proof of Proposition $\ref{prop local wellposedness energy-critical inverse square}$.}
We only consider the positive time, the negative time is similar. Let us define
\[
X:= \Big\{ u \in C(I, H^1) \cap L^p(I, W^{1,q}) \ \Big| \ \|u\|_{L^p(I, \dot{W}^{1,q})} \leq M  \Big\}
\]
equipped with the distance
\[
d(u,v):= \|u-v\|_{L^p(I, L^q)},
\]
where $I=[0,T]$ with $T, M>0$ to be chosen later. 
By the Duhamel formula, it suffices to prove that the functional
\[
\Phi(u)(t)= e^{-itP_c} u_0 +i\int_0^t e^{-i(t-s)P_c} |u(s)|^{\alpha^\star} u(s)ds =: u_{\text{hom}}(t)+ u_{\text{inh}}(t)
\]
is a contraction on $(X,d)$. Using Strichartz estimates and the fact $\|u\|_{\dot{W}^{1,q}_c} \sim \|u\|_{\dot{W}^{1,q}}$, we have
\[
\|u_{\text{hom}}\|_{L^p(I, \dot{W}^{1,q})}\sim \|u_{\text{hom}}\|_{L^p(I, \dot{W}^{1,q}_c)} \lesssim \|u_0\|_{\dot{H}^1_c}\sim \|u_0\|_{\dot{H}^1}.
\]
This shows that $\|u_{\text{hom}}\|_{L^p(I, \dot{W}^{1,q})} \leq \ep$ for some $\ep>0$ small enough provided that $T$ is small or $\|u_0\|_{\dot{H}^1}$ is small. By Strichartz estimates, the equivalence $\|u\|_{\dot{W}^{1,q}_c} \sim \|u\|_{\dot{W}^{1,q}}$, the fractional derivative estimates and the Sobolev embedding $\dot{W}^{1,q} \subset L^p$,
\begin{align*}
\|u_{\text{inh}}\|_{L^p(I, \dot{W}^{1,q}_c)}\sim \|u_{\text{inh}}\|_{L^p(I, \dot{W}^{1,q}_c)} &\lesssim \||u|^{\alpha^\star} u\|_{L^2(I, \dot{W}^{1,\frac{2d}{d+2}}_c)} \sim \||u|^{\alpha^\star} u\|_{L^2(I, \dot{W}^{1,\frac{2d}{d+2}})} \\
&\lesssim \|u\|^{\alpha^\star}_{L^p(I, L^p)} \|u\|_{L^p(I, \dot{W}^{1,q})} \lesssim \|u\|^{\alpha^\star+1}_{L^p(I, \dot{W}^{1,q})}. 
\end{align*}
Note that it is easy to check that $\dot{W}^{1,\frac{2d}{d+2}}_c \sim \dot{W}^{1,\frac{2d}{d+2}}$. Similarly,
\begin{align*}
\||u|^{\alpha^\star} u - |v|^{\alpha^\star}v \|_{L^2(I, L^{\frac{2d}{d+2}})} &\lesssim \Big(\|u\|^{\alpha^\star}_{L^p(I, L^p)} + \|v\|^{\alpha^\star}_{L^p(I, L^p)} \Big) \|u-v\|_{L^p(I, L^q)} \\
&\lesssim \Big(\|u\|^{\alpha^\star}_{L^p(I, \dot{W}^{1,q})} + \|v\|^{\alpha^\star}_{L^p(I, \dot{W}^{1,q})} \Big) \|u-v\|_{L^p(I, L^q)}.
\end{align*}
This implies that for any $u,v \in X$, there exists $C>0$ independent of $T$ and $u_0 \in H^1$ such that
\begin{align*}
\|\Phi(u)\|_{L^p(I, \dot{W}^{1,q}_c)} &\leq \ep + C M^{\alpha^\star+1}, \\
d(\Phi(u), \Phi(v)) &\leq CM^{\alpha^\star} d(u,v).
\end{align*}
If we choose $\ep$ and $M$ small so that
\[
CM^{\alpha^\star} \leq \frac{1}{2}, \quad \ep + \frac{M}{2} \leq M,
\]
then $\Phi$ is a contraction on $(X,d)$. This shows the local existence. It remains to show the scattering for small data. As mentioned above, when $\|u_0\|_{\dot{H}^1}$ is small enough, we can take $T^*=\infty$. By Strichartz estimates, we have for $0<t_1<t_2$,
\begin{align*}
\|e^{it_2 P_c} u(t_2) - e^{it_1 P_c} u(t_1)\|_{\dot{H}^1} \sim \|e^{it_2 P_c} u(t_2) - e^{it_1 P_c} u(t_1)\|_{\dot{H}^1_c} &=\Big\| - i \int_{t_1}^{t_2} e^{isP_c} |u(s)|^{\alpha^\star} u(s) ds \Big\|_{\dot{H}^1_c} \\
&\lesssim \||u|^{\alpha^\star} u\|_{L^2([t_1, t_2], \dot{W}^{1,\frac{2d}{d+2}}_c)}\\
&\sim \||u|^{\alpha^\star} u\|_{L^2([t_1, t_2], \dot{W}^{1,\frac{2d}{d+2}})}\\
&\lesssim \|u\|^{\alpha^\star+1}_{L^p([t_1,t_2], \dot{W}^{1,q})}.
\end{align*}
Similarly, 
\[
\|e^{it_2 P_c} u(t_2) - e^{it_1 P_c} u(t_1)\|_{L^2} \lesssim \||u|^{\alpha^\star} u\|_{L^2([t_1,t_2], L^{\frac{2d}{d+2}})} \lesssim \|u\|^{\alpha^\star}_{L^p([t_1,t_2], \dot{W}^{1,q})} \|u\|_{L^p([t_1,t_2], L^q)}.
\]
This shows that
\[
\|e^{it_2P_c} u(t_2) - e^{it_1 P_c} u(t_1)\|_{H^1} \rightarrow 0,
\]
as $t_1, t_2 \rightarrow +\infty$. Thus the limit $u_0^+: \lim_{t\rightarrow +\infty} e^{itP_c} u(t)$ exists in $H^1$. Moreover,
\[
u(t) - e^{-itP_c} u_0^+ = -i \int_t^{+\infty} e^{-i(t-s)P_c} |u(s)|^{\alpha^\star} u(s) ds.
\]
Estimating as above, we get
\[
\lim_{t\rightarrow +\infty} \|u(t) - e^{-itP_c} u^+_0\|_{H^1} =0.
\]
The proof is complete.
\defendproof
\section{Variational analysis} \label{section variational analysis}
\setcounter{equation}{0}
In this section, we recall the sharp Gagliardo-Nirenberg and the sharp Sobolev embedding inequalities related to the $(\nlsct)$. \newline
\indent Let us start with the following sharp Gagliardo-Nirenberg inequality:
\begin{align}
\|f\|^{\alpha+2}_{L^{\alpha+2}} \leq C_{\text{GN}}(c) \|f\|_{L^2}^{\frac{4-(d-2)\alpha}{2}} \|f\|_{\dot{H}^1_c}^{\frac{d\alpha}{2}}. \label{sharp gagliardo nirenberg inequality}
\end{align}
The sharp constant $C_{\text{GN}}(c)$ is defined by
\[
C_{\text{GN}}(c) := \sup\left\{ J_c(f) : f\in H^1_c\backslash\{0\}\right\},
\]
where $J_c(f)$ is the Weinstein functional
\[
J_c(f):= \|f\|^{\alpha+2}_{L^{\alpha+2}} \div \Big[\|f\|_{L^2}^{\frac{4-(d-2)\alpha}{2}} \|f\|^{\frac{d\alpha}{2}}_{\dot{H}^1_c}\Big].
\]
We also consider the sharp radial Gagliardo-Nirenberg inequality:
\begin{align}
\|f\|^{\alpha+2}_{L^{\alpha+2}} \leq C_{\text{GN}}(c, \text{rad}) \|f\|_{L^2}^{\frac{4-(d-2)\alpha}{2}} \|f\|_{\dot{H}^1_c}^{\frac{d\alpha}{2}}, \quad f \text{ radial}, \label{sharp gagliardo nirenberg inequality radial}
\end{align}
where the sharp constant $C_{\text{GN}}(c,\text{rad})$ is defined by
\[
C_{\text{GN}}(c,\text{rad}) := \sup\left\{ J_c(f) : f\in H^1_c\backslash\{0\}, f \text{ radial}\right\}.
\]
When $c=0$, Weinstein in \cite{Weinstein} proved that the sharp constant $C_{\text{GN}}(0)$ is attained by the function $Q_0$, which is the unique positive radial solution of 
\begin{align}
\Delta Q_0 -Q_0 + Q_0^{\alpha+1} =0. \label{ground state equation 0}
\end{align}
Recently, Killip-Murphy-Visan-Zheng extended Weinstein's result to $c\ne 0$. More precisely, we have the following:
\begin{theorem}[Sharp Gagliardo-Nirenberg inequality \cite{KillipMurphyVisanZheng}] \label{theorem gagliardo nirenberg inequality}
Let $d\geq 3, 0<\alpha<\alpha^\star$ and $c\ne 0$ be such that $c>-\lambda(d)$. Then we have $C_{\emph{GN}}(c) \in (0,\infty)$ and 
\begin{itemize}
\item[1.] if $-\lambda(d)<c<0$, then the equality in $(\ref{sharp gagliardo nirenberg inequality})$ is attained by a function $Q_c \in H^1_c$, which is a non-zero, non-negative, radial solution to the elliptic equation
\begin{align}
- P_c Q_c -Q_c + Q_c^{\alpha+1} =0. \label{ground state equation}
\end{align}
\item[2.] if $c>0$, then $C_{\emph{GN}}(c) = C_{\emph{GN}}(0)$ and the equality in $(\ref{sharp gagliardo nirenberg inequality})$ is never attained. However, the constant $C_{\emph{GN}}(c,\emph{rad})$ is attained by a function $Q_{c,\emph{rad}}$ which is a solution to the elliptic equation
\begin{align}
- P_c Q_{c, \emph{rad}} -Q_{c, \emph{rad}} + Q_{c, \emph{rad}}^{\alpha+1} =0. \label{ground state equation radial}
\end{align}
\end{itemize}
\end{theorem}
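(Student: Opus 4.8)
The plan is to read \eqref{sharp gagliardo nirenberg inequality} as a maximization problem for the Weinstein functional $J_c$ and to establish the existence of maximizers by a concentration--compactness argument, using Lemma~\ref{lem convergence operators} to control profiles that escape to spatial infinity. Finiteness $C_{\text{GN}}(c)<\infty$ is immediate from the classical Gagliardo--Nirenberg inequality \eqref{ground state equation 0}-optimized by $Q_0$ together with the norm equivalence $\|f\|_{\dot H^1_c}\sim\|f\|_{\dot H^1}$ coming from \eqref{sharp hardy inequality}; positivity is clear by testing $J_c$ on a fixed bump. Record also that $J_c$ is invariant under the two-parameter scaling $f\mapsto\lambda f(\mu\,\cdot)$, $\lambda\in\C$, $\mu>0$, but \emph{not} under translations (because $P_c$ is anchored at the origin) --- this asymmetry is precisely the source of both the compactness and the non-attainment for $c>0$.

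The next step is to compare $C_{\text{GN}}(c)$ with the free constant $C_{\text{GN}}(0)$. When $c>0$ one has $\|f\|_{\dot H^1_c}\ge\|f\|_{\dot H^1}$, hence $J_c\le J_0$ and $C_{\text{GN}}(c)\le C_{\text{GN}}(0)$; translating the free optimizer $Q_0$ off to infinity makes $\int|x|^{-2}|Q_0(\cdot-y)|^2\to0$, so $J_c(Q_0(\cdot-y))\to C_{\text{GN}}(0)$ and therefore $C_{\text{GN}}(c)=C_{\text{GN}}(0)$. If this supremum were attained at some $f$, then $J_0(f)\ge J_c(f)=C_{\text{GN}}(0)$ would make $f$ a free optimizer, hence (up to symmetries) a dilate/translate of $Q_0$, which is everywhere positive; but then $\int|x|^{-2}|f|^2>0$ gives the strict inequality $J_c(f)<J_0(f)=C_{\text{GN}}(0)$, a contradiction --- so for $c>0$ the supremum is never attained. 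When $-\lambda(d)<c<0$ one instead has $\|f\|_{\dot H^1_c}<\|f\|_{\dot H^1}$ for any $f$ nontrivial near the origin, so evaluating $J_c$ at $Q_0$ gives $C_{\text{GN}}(c)\ge J_c(Q_0)>J_0(Q_0)=C_{\text{GN}}(0)$: the key strict inequality $C_{\text{GN}}(c)>C_{\text{GN}}(0)$. I would then symmetrize: passing to the symmetric-decreasing rearrangement $f^\ast$ preserves every $L^p$ norm, does not increase $\|\nabla f\|_{L^2}$ (P\'olya--Szeg\H{o}), and does not decrease $\int|x|^{-2}|f|^2$ (Hardy--Littlewood, $|x|^{-2}$ being its own rearrangement); for $c<0$ this yields $J_c(f^\ast)\ge J_c(f)$, and replacing $f$ by $|f|$ lets me assume a maximizing sequence is radial and nonnegative. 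For $c>0$ this monotonicity fails, which is exactly why one must restrict to the radial class to obtain $Q_{c,\text{rad}}$; the same comparison argument (now against $C_{\text{GN}}(0)=C_{\text{GN}}(0,\text{rad})$, attained by the radial $Q_0$) shows $C_{\text{GN}}(c,\text{rad})<C_{\text{GN}}(0)$ strictly.

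The heart of the proof --- and the step I expect to be the main obstacle --- is the compactness of a suitably normalized maximizing sequence $(f_n)$ with $\|f_n\|_{L^2}=\|f_n\|_{\dot H^1_c}=1$ and $\|f_n\|_{L^{\alpha+2}}^{\alpha+2}\to C_{\text{GN}}(c)$. I would apply an $H^1$-profile decomposition adapted to $P_c$ (equivalently a linear profile decomposition for $e^{itP_c}$, in the spirit of \cite{KillipMiaoVisanZhangZheng-energy}), writing $f_n=\sum_j \phi^j(\cdot-x_n^j)+r_n^J$ up to scalings. Profiles with $|x_n^j|\to\infty$ ``see'' only $-\Delta$ by Lemma~\ref{lem convergence operators}, so the potential term decouples with $c$ replaced by $0$ on those profiles, and such a profile can contribute at most the free constant $C_{\text{GN}}(0)$; profiles with bounded $x_n^j$ contribute at most $C_{\text{GN}}(c)$. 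Setting $m_j=\|\phi^j\|_{L^2}^2$, $k_j=\|\phi^j\|_{\dot H^1_{c_j}}^2$, one gets $\sum m_j\le1$, $\sum k_j\le1$, $\sum_j\ell_j= C_{\text{GN}}(c)$ with $\ell_j\le C_{\text{GN}}(c_j)\,m_j^{\beta_1}k_j^{\beta_2}$, where $\beta_1=\tfrac{4-(d-2)\alpha}{4}>0$ (using $\alpha<\alpha^\star$), $\beta_2=\tfrac{d\alpha}{4}>0$, and $\beta_1+\beta_2=\tfrac{\alpha+2}{2}>1$. Weighted AM--GM plus $\sum x_j^{s}\le(\sum x_j)^s$ for $s\ge1$ gives $\sum_j m_j^{\beta_1}k_j^{\beta_2}\le1$ with equality only if a single profile carries all the mass; combined with $C_{\text{GN}}(0)<C_{\text{GN}}(c)$ (so no escaping profile can be the dominant one), this forces exactly one profile, with bounded translation parameter and $m_1=k_1=1$, hence itself a maximizer, and $r_n^J\to0$ in $L^{\alpha+2}$. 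Thus $f_n\to Q_c$ strongly in $H^1$ after the final rescaling. (In the radial case an escaping profile is a spherical shell of radius $R_n\to\infty$, and a direct computation using the $r^{d-1}$ Jacobian shows $J_c$ of such a shell is $O(R_n^{(d-1)(1-\beta_1-\beta_2)})\to0$ since $\beta_1+\beta_2>1$, so again the maximizer stays in a bounded region.) Finally, the nonnegative maximizer satisfies the Euler--Lagrange equation, and choosing the scaling parameters $\lambda,\mu$ to normalize the coefficients turns it into $-P_cQ_c-Q_c+Q_c^{\alpha+1}=0$ as in \eqref{ground state equation}; elliptic regularity and the strong maximum principle upgrade $Q_c\ge0$ to positivity away from the origin, and the identical argument within the radial class produces $Q_{c,\text{rad}}$ solving \eqref{ground state equation radial}.
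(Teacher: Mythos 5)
Your proposal is correct, and its bookends match the paper's proof: finiteness and positivity of $C_{\text{GN}}(c)$ from the norm equivalence $\|f\|_{\dot{H}^1_c}\sim\|f\|_{\dot{H}^1}$; the identification $C_{\text{GN}}(c)=C_{\text{GN}}(0)$ for $c>0$ by sending translates of $Q_0$ to spatial infinity via Lemma \ref{lem convergence operators}; non-attainment from the strict pointwise inequality between $\|f\|_{\dot{H}^1}$ and $\|f\|_{\dot{H}^1_c}$; and the Euler--Lagrange plus rescaling step producing \eqref{ground state equation}. Where you genuinely diverge is the core compactness step. The paper symmetrizes the maximizing sequence --- using exactly the rearrangement monotonicity you cite, $\int c|x|^{-2}|f^*|^2\leq\int c|x|^{-2}|f|^2$ for $c<0$ --- and then concludes with the Strauss compact embedding $H^1_{\text{rad}}\hookrightarrow L^{\alpha+2}$ and weak lower semicontinuity; no profile decomposition appears. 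You sketch the same symmetrization but then treat a translation profile decomposition, with the strict subadditivity $C_{\text{GN}}(c)>C_{\text{GN}}(0)$ (obtained by testing $J_c$ on $Q_0$) excluding escaping profiles, as the heart of the proof; once the sequence is radial that machinery is redundant, since radial compactness already yields strong $L^{\alpha+2}$ convergence. Your route does buy something: it gives attainment for $-\lambda(d)<c<0$ without invoking symmetrization at all, hence would survive in settings where rearrangement monotonicity fails --- at the price of a profile decomposition adapted to $P_c$ and, in your $c>0$ non-attainment argument, the uniqueness up to symmetries of the free optimizer $Q_0$, which the paper's one-line argument ($J_c(f)<J_0(f)\leq C_{\text{GN}}(0)$ for every $f\neq 0$) avoids entirely. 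Both are sound; the paper's is the more economical.
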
 
\begin{proof}
In \cite[Theorem 3.1]{KillipMurphyVisanZheng}, Killip-Murphy-Visan-Zheng gave the proof for $d=3$ and $\alpha=2$. For reader's convenience, we provide some details for the general case. Since $\|f\|_{\dot{H}^1_c} \sim \|f\|_{\dot{H}^1}$, we see that $J_c(f) \sim J_0(f)$. Thus the standard Gagliardo-Nirenberg inequality (i.e. $(\ref{sharp gagliardo nirenberg inequality})$ with $c=0$) implies $0<C_{\text{GN}}(c)<\infty$. \newline
\indent Let us consider the case $-\lambda(d)<c<0$. Let $(f_n)_n \subset H^1_c \backslash \{0\}$ be a maximizing sequence, i.e. $J_c(f_n) \nearrow C_{\text{GN}}(c)$. Let $f^*_n$ be the Schwarz symmetrization of $f_n$ (see e.g. \cite{LiebLoss}). Using the fact that the Schwarz symmetrization preserves $L^q$ norm and does not increase $\dot{H}^1$ norm together with the Riesz rearrangement inequality
\begin{align}
\int c|x|^{-2} |f^*(x)|^2  \leq \int c|x|^{-2}|f(x)|^2 dx,  \label{riesz rearrangement inequality}
\end{align}
for $c<0$, we see that $J_c(f_n) \leq J_c(f^*_n)$. Thus we may assume that each $f_n$ is radial. Note that $(\ref{riesz rearrangement inequality})$ plays an important role in order to restore the lack of compactness due to translations. We next observe that the functional $J_c$ is invariant under the scaling 
\[
f_{\lambda, \mu} (x) := \lambda f(\mu x), \quad \lambda, \mu>0.
\]
Indeed, a simple computation shows
\[
\|f_{\lambda, \mu}\|^2_{\dot{H}^1_c} = \lambda^2 \mu^{2-d} \|f\|^2_{\dot{H}^1_c}, \quad \|f_{\lambda, \mu}\|^2_{L^2} = \lambda^2 \mu^{-d} \|f\|_{L^2}^2, \quad \|f_{\lambda, \mu}\|^{\alpha+2}_{L^{\alpha+2}} = \lambda^{\alpha+2} \mu^{-d} \|f\|^{\alpha+2}_{L^{\alpha+2}}.
\]
We thus get $J_c(f_{\lambda, \mu}) = J_c(f)$. We now rescale the sequence $(f_n)_n$ by setting $g_n(x):= \lambda_n f_n(\mu_n x)$, where
\[
\lambda_n = \frac{\|f_n\|^{d/2-1}_{L^2}}{\|f_n\|^{d/2}_{\dot{H}^1_c}}, \quad \mu_n = \frac{\|f_n\|_{L^2}}{\|f_n\|_{\dot{H}^1_c}}.
\]
It is easy to see that $\|g_n\|_{L^2} = \|g_n\|_{\dot{H}^1_c} = 1$. We thus get a maximizing sequence $(g_n)_n$ of $J_c$, which is bounded in $H^1_c$.  We have from the compactness lemma (see e.g. \cite{Weinstein}) that $H^1_{\text{rad}}(\R^d) \hookrightarrow L^{\alpha+2}(\R^d)$ compactly for any $0<\alpha<\frac{4}{d-2}$. Therefore, there exists $g \in H^1_c$ such that, up to a subsequence, $g_n \rightarrow g$ strongly in $L^{\alpha+2}$ as well as weakly in $H^1_c$. By the weak convergence, $\|g\|_{L^2} \leq 1$ and $\|g\|_{\dot{H}^1_c} \leq 1$. Hence,
\[
C_{\text{GN}}(c) = \lim_{n\rightarrow \infty} J_c(g_n) =\|g\|^{\alpha+2}_{L^{\alpha+2}} \leq J_c(g) \leq C_{\text{GN}}(c).
\]
Thus, we have $J_c(g)=\|g\|^{\alpha+2}_{L^{\alpha+2}} = C_{\text{GN}}(c)$ and $\|g\|_{L^2} = \|g\|_{\dot{H}^1_c}=1$. Therefore, $g$ is a maximizer for the Weinstein functional $J_c$, and so $g$ must satisfy the Euler-Lagrange equation
\[
\frac{d}{d\ep}\Big|_{\ep=0} J_c(g+\ep h) =0, \quad \forall h \in C^\infty_0 (\R^d\backslash \{0\}).
\] 
Taking into consideration that $\|g\|_{L^2}= \|g\|_{\dot{H}^1_c} =1$ and $C_{\text{GN}}(c)=\|g\|^{\alpha+2}_{L^{\alpha+2}}$, we get
\[
-\frac{d\alpha}{2} C_{\text{GN}}(c) P_c g - \frac{4-(d-2)\alpha}{2} C_{\text{GN}}(c) g + (\alpha+2) g^{\alpha+1} =0. 
\]
If we define $Q_c$ by $g(x) = \lambda Q_c(\mu x)$ with
\[
\lambda = \sqrt[\alpha]{\frac{4-(d-2)\alpha}{2(\alpha+2)} C_{\text{GN}}(c)}, \quad \mu = \sqrt{\frac{4-(d-2)\alpha}{d\alpha}},
\]
then $Q_c$ solves $(\ref{ground state equation})$. This proves Item 1. \newline
\indent In the case $c>0$, we consider a sequence $(x_n)_n \subset \R^d$ with $|x_n|\rightarrow \infty$. Let $Q_0$ be the unique positive radial solution to $(\ref{ground state equation 0})$. Using the definition $(\ref{definition operators})$ and $(\ref{convergence in L2})$, we have
\[
\|Q_0(\cdot-x_n)\|^2_{\dot{H}^1_c} = \| \sqrt{P_c}[Q_0(\cdot-x_n)]\|^2_{L^2} = \|[\sqrt{P^n_c} Q_0](\cdot-x_n)\|^2_{L^2} \rightarrow \|\sqrt{P^\infty_c} Q_0\|^2_{L^2}= \|Q_0\|^2_{\dot{H}^1}.
\]
We thus get
\[
J_c(Q_0(\cdot-x_n)) \rightarrow J_0 (Q_0) = C_{\text{GN}}(0),
\]
hence $C_{\text{GN}}(0) \leq C_{\text{GN}}(c)$. Since $c>0$, it is obvious that $\|f\|_{\dot{H}^1_x} < \|f\|_{\dot{H}^1_c}$ for any $f \in H^1 \backslash\{0\}$. The sharp Gagliardo-Nirenberg inequality for $c=0$ then implies
\[
\|f\|^{\alpha+2}_{L^{\alpha+2}} \leq C_{\text{GN}}(0)\|f\|^{\frac{4-(d-2)\alpha}{2}}_{L^2} \|f\|^{\frac{d\alpha}{2}}_{\dot{H}^1} < C_{\text{GN}}(0)\|f\|^{\frac{4-(d-2)\alpha}{2}}_{L^2} \|f\|^{\frac{d\alpha}{2}}_{\dot{H}^1_c},
\]
whence $J_c(f) < C_{\text{GN}}(0)$ for any $f \in H^1 \backslash\{0\}$. Since $H^1$ is equivalent to $H^1_c$, we obtain $C_{\text{GN}}(c)<C_{\text{GN}}(0)$. Therefore, $C_{\text{GN}}(c)=C_{\text{GN}}(0)$. The last estimate also shows that the equality in $(\ref{sharp gagliardo nirenberg inequality})$ is never attained. Note also that the estimate $(\ref{riesz rearrangement inequality})$ fails to hold true when $c>0$. If we only consider radial functions, then the result follows exactly as the case $-\lambda(d)<c<0$ (after passing to the Schwarz symmetrization sequence). The proof is complete.
\end{proof}
\begin{rem} \label{rem ground state equation}
\begin{itemize}
\item[1.] When $-\lambda(d)<c<0$, the proof of Theorem $\ref{theorem gagliardo nirenberg inequality}$ shows that there exist solutions to the elliptic equation $(\ref{ground state equation})$, which are non-zero, non-negative and radially symmetric. However, unlike the standard case $c=0$, we do not know that the uniqueness (up to symmetries) of these solutions. Moreover, any positive maximiser of $J_c$ is radial. Furthermore, if $Q_c$ is a maximiser of $J_c$, then by multiplying $(\ref{ground state equation})$ with $Q_c$ and $x\cdot \nabla Q_c$ and integrating over $\R^d$, we obtain the following Pohozaev identities:
\[
\|Q_c\|^2_{\dot{H}^1_c} + \|Q_c\|^2_{L^2} -\|Q_c\|^{\alpha+2}_{L^{\alpha+2}} = \frac{d-2}{2} \|Q_c\|^2_{\dot{H}^1_c} + \frac{d}{2}\|Q_c\|^2_{L^2} -\frac{d}{\alpha+2}\|Q_c\|^{\alpha+2}_{L^{\alpha+2}}=0.
\]
In particular, 
\begin{align}
\|Q_c\|^2_{L^2} = \frac{4-(d-2)\alpha}{d\alpha} \|Q_c\|^2_{\dot{H}^1_c}= \frac{4-(d-2)\alpha}{2(\alpha+2)} \|Q_c\|^{\alpha+2}_{L^{\alpha+2}}, \label{ground state property}
\end{align}
and
\begin{align}
C_{\text{GN}}(c) &= \frac{2(\alpha+2)}{4-(d-2)\alpha}\Big[\frac{4-(d-2)\alpha}{d\alpha}\Big]^{\frac{d\alpha}{4}} \frac{1}{\|Q_c\|^\alpha_{L^2}} \label{pohozaev identity} \\
&= \frac{2(\alpha+2)}{d\alpha}\Big[\frac{d\alpha}{4-(d-2)\alpha}\Big]^{\frac{4-(d-2)\alpha}{4}} \frac{1}{\|Q_c\|^\alpha_{\dot{H}^1_c}} \nonumber \\
&= \frac{[2(\alpha+2)]^{\frac{\alpha+2}{2}}}{[4-(d-2)\alpha]^{\frac{4-(d-2)\alpha}{4}} [d\alpha]^{\frac{d\alpha}{4}}} \frac{1}{\|Q_c\|^{\frac{\alpha(\alpha+2)}{2}}_{L^{\alpha+2}}}. \nonumber
\end{align}
In particular, all maximizers of $J_c$ have the same $L^2, \dot{H}^1_c, L^{\alpha+2}$-norms. We also have
\begin{align}
E_c(Q_c) = \frac{d\alpha-4}{2[4-(d-2)\alpha]} \|Q_c\|^2_{L^2} = \frac{d\alpha-4}{2d\alpha}\|Q_c\|^2_{\dot{H}^1_c}. \label{energy ground state}
\end{align}
In particular, in the mass-critical case, i.e. $\alpha=\frac{4}{d}$, we have $E_c(Q_c)=0$. 
\item[2.] Since the identities $(\ref{ground state property})-(\ref{energy ground state})$ hold true for $c=0$, we have from Theorem $\ref{theorem gagliardo nirenberg inequality}$ that for any $c>-\lambda(d)$,
\begin{align}
C_{\text{GN}}(c) &= \frac{2(\alpha+2)}{4-(d-2)\alpha}\Big[\frac{4-(d-2)\alpha}{d\alpha}\Big]^{\frac{d\alpha}{4}} \frac{1}{\|Q_{\cbar}\|^\alpha_{L^2}} \label{sharp gagliardo nirenberg constant} \\
&= \frac{2(\alpha+2)}{d\alpha}\Big[\frac{d\alpha}{4-(d-2)\alpha}\Big]^{\frac{4-(d-2)\alpha}{4}} \frac{1}{\|Q_{\cbar}\|^\alpha_{\dot{H}^1_{\cbar}}} \nonumber \\
&= \frac{[2(\alpha+2)]^{\frac{\alpha+2}{2}}}{[4-(d-2)\alpha]^{\frac{4-(d-2)\alpha}{4}} [d\alpha]^{\frac{d\alpha}{4}}} \frac{1}{\|Q_{\cbar}\|^{\frac{\alpha(\alpha+2)}{2}}_{L^{\alpha+2}}}, \nonumber
\end{align}
where $\cbar = \min \{c,0\}$.
\item[3.] Let $H(c)$ and $K(c)$ be as in $(\ref{define Hc Kc})$. Using $(\ref{ground state property}), (\ref{pohozaev identity})$ and $(\ref{energy ground state})$, it is easy to see that
\begin{align}
H(c) = 
\frac{d\alpha-4}{2d\alpha}\Big[\frac{d\alpha}{2(\alpha+2)} C_{\text{GN}}(c) \Big]^{-\frac{4}{d\alpha-4}}, \label{relation energy}
\end{align}
and
\begin{align}
K(c)= 
\Big[\frac{d\alpha}{2(\alpha+2)} C_{\text{GN}}(c) \Big]^{-\frac{2}{d\alpha-4}}. \label{relation kinetic}
\end{align}
In particular, 
\begin{align}
H(c)=\frac{d\alpha-4}{2d\alpha} K(c)^2. \label{relation energy kinetic}
\end{align}
\item[4.] When $c>0$, we see that the same identities as in $(\ref{ground state property})$, $(\ref{pohozaev identity})$, $(\ref{energy ground state})$, $(\ref{sharp gagliardo nirenberg constant})$, $(\ref{energy ground state})$, $(\ref{relation energy})$, $(\ref{relation kinetic})$ and $(\ref{relation energy kinetic})$ hold true with $Q_{c,\text{rad}}$, $C_{\text{GN}}(c,\text{rad})$, $H(c,\text{rad})$ and $K(c,\text{rad})$ in place of $Q_{c}$, $C_{\text{GN}}(c)$, $H(c)$ and $K(c)$ respectively.
\end{itemize}
\end{rem}
\indent Let us now consider the sharp Sobolev embedding inequality:
\begin{align}
\|f\|_{L^{\alpha^\star+2}} \leq C_{\text{SE}}(c)\|f\|_{\dot{H}^1_c}, \label{sharp sobolev embedding}
\end{align}
where the sharp constant $C_{\text{SE}}(c)$ is defined by
\[
C_{\text{SE}}(c):= \sup \left\{\|f\|_{L^{\alpha^\star+2}} \div \|f\|_{\dot{H}^1_c} \ : \ f \in \dot{H}^1_c \backslash\{0\} \right\}.
\]
We also consider the sharp radial Sobolev embedding inequality
\begin{align}
\|f\|_{L^{\alpha^\star+2}} \leq C_{\text{SE}}(c, \text{rad})\|f\|_{\dot{H}^1_c}, \quad f \text{ radial}
\label{sharp sobolev embedding radial}
\end{align}
where the sharp constant $C_{\text{SE}}(c, \text{rad})$ is defined by
\[
C_{\text{SE}}(c, \text{rad}):= \sup\left\{\|f\|_{L^{\alpha^\star+2}} \div \|f\|_{\dot{H}^1_c} \ : \ f \in \dot{H}^1_c \backslash\{0\}, f \text{ radial} \right\}.
\]
\indent When $c=0$, it was proved by Aubin \cite{Aubin} and Talenti \cite{Talenti} that the constant $C_{\text{SE}}(0)$ is attained by functions $f(x)$ of a form $\lambda W_0(\mu x+y)$ for some $\lambda \in \C, \mu>0$ and $y \in \R^d$, where $W_0$ is given in $(\ref{define W_0})$. \newline
\indent When $c\ne 0$, Killip-Miao-Visan-Zhang-Zheng in \cite{KillipMiaoVisanZhangZheng-energy} proved the following result.
\begin{theorem}[Sharp Sobolev embedding inequality \cite{KillipMiaoVisanZhangZheng-energy}]  \label{theorem sobolev embedding inequality}
Let $d\geq 3$ and $c\ne 0$ be such that $c>-\lambda(d)$. Then $C_{\emph{SE}}(c) \in (0,\infty)$ and 
\begin{itemize}
\item[1.] if $-\lambda(d)<c<0$, then the equality in $(\ref{sharp sobolev embedding})$ is attained by functions $f(x)$ of the form $\lambda W_c(\mu x)$ for some $\lambda \in \C$ and some $\mu>0$, where $W_c$ is given in $(\ref{define W_c})$.
\item[2.] if $c>0$, then $C_{\emph{SE}}(c)=C_{\emph{SE}}(0)$ and the equality in $(\ref{sharp sobolev embedding})$ is never attained. However, $C_{\emph{SE}}(c,\emph{rad})$ is attained by functions $f(x)$ of the form $\lambda W_c(\mu x)$ for some $\lambda \in \C$ and some $\mu>0$, where $W_c$ is again given in $(\ref{define W_c})$.
\end{itemize}
\end{theorem}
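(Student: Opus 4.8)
\medskip
\noindent\textbf{Proof sketch (plan).} The plan is to follow the same scheme as in the proof of Theorem $\ref{theorem gagliardo nirenberg inequality}$: first reduce to radial functions, and then strip off the inverse-square potential by an explicit change of variables. Since $\|f\|_{\dot{H}^1_c}\sim\|f\|_{\dot{H}^1}$ by the sharp Hardy inequality, the classical Sobolev inequality already gives $C_{\text{SE}}(c)\in(0,\infty)$, so the only real task is to identify the extremisers.

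The heart of the matter is the observation that, with $\rho$ as in $(\ref{define rho})$, the function $|x|^{-\rho}$ is $P_c$-harmonic, i.e. $\Delta|x|^{-\rho}=c|x|^{-2}|x|^{-\rho}$, which is exactly how $\rho$ was chosen. I would therefore substitute, for radial $f$, $f(x)=|x|^{-\rho}g(|x|)$ and then $g(r)=h(r^{\beta})$ with $\beta=1-\tfrac{2\rho}{d-2}>0$; a direct (if slightly tedious) computation should then show that $\|f\|_{\dot{H}^1_c}^2$ and $\|f\|_{L^{\alpha^\star+2}}^{\alpha^\star+2}$ equal, up to $f$-independent positive constants depending only on $d$, the quantities $\|h\|_{\dot{H}^1}^2$ and $\|h\|_{L^{\alpha^\star+2}}^{\alpha^\star+2}$ for $h$ viewed as a radial function on $\R^d$. (The arithmetic point is that the effective dimension coming out of these substitutions is exactly $d$, which is precisely why the critical exponent $\alpha^\star+2=\tfrac{2d}{d-2}$ is unchanged.) With this in hand, the radial extremal problem for $C_{\text{SE}}(c,\text{rad})$ reduces to the radial extremal problem for the classical Sobolev inequality, whose extremisers are, by Aubin \cite{Aubin} and Talenti \cite{Talenti}, exactly the Aubin--Talenti bubbles centred at the origin, $\lambda W_0(\mu\,\cdot)$; pulling these back through the substitution yields precisely the functions $\lambda W_c(\mu\,\cdot)$ with $W_c$ as in $(\ref{define W_c})$. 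In particular $C_{\text{SE}}(c,\text{rad})$ is always attained by $W_c$.

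Next I would pass from the radial problem to the full one. For $-\lambda(d)<c<0$: Schwarz symmetrisation $f\mapsto f^*$ preserves $\|f\|_{L^{\alpha^\star+2}}$, does not increase $\|\nabla f\|_{L^2}$ (P\'olya--Szeg\H{o}), and, by the Riesz rearrangement inequality $(\ref{riesz rearrangement inequality})$ (which is available precisely because $c<0$), does not increase $c\int|x|^{-2}|f|^2\,dx$ either; hence $C_{\text{SE}}(c)=C_{\text{SE}}(c,\text{rad})$ and the extremisers are the $\lambda W_c(\mu\,\cdot)$. For $c>0$: $\|f\|_{\dot{H}^1_c}^2=\|f\|_{\dot{H}^1}^2+c\int|x|^{-2}|f|^2\,dx\ge\|f\|_{\dot{H}^1}^2$ gives $C_{\text{SE}}(c)\le C_{\text{SE}}(0)$, while testing against $W_0(\,\cdot-x_n)$ with $|x_n|\to\infty$ and using Lemma $\ref{lem convergence operators}$ --- exactly as in the $c>0$ part of the proof of Theorem $\ref{theorem gagliardo nirenberg inequality}$, so that $\|W_0(\,\cdot-x_n)\|_{\dot{H}^1_c}^2\to\|W_0\|_{\dot{H}^1}^2$ --- gives the reverse inequality; hence $C_{\text{SE}}(c)=C_{\text{SE}}(0)$. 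Moreover, since $\|f\|_{\dot{H}^1_c}>\|f\|_{\dot{H}^1}$ strictly for $f\ne0$, we get $\|f\|_{L^{\alpha^\star+2}}/\|f\|_{\dot{H}^1_c}<\|f\|_{L^{\alpha^\star+2}}/\|f\|_{\dot{H}^1}\le C_{\text{SE}}(0)=C_{\text{SE}}(c)$ for every nonzero $f$, so $C_{\text{SE}}(c)$ is never attained, while $C_{\text{SE}}(c,\text{rad})$ is attained by $W_c$ by the previous paragraph.

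The hard part will be making the change of variables rigorous: one must check that $f\mapsto h$ is a genuine bijection between the radial subspaces of $\dot{H}^1_c$ and $\dot{H}^1$, and not merely an isometry-up-to-constant on a dense class such as $C^\infty_0(\R^d\backslash\{0\})$; this forces one to control the behaviour at the origin, and for $-\lambda(d)<c<1-\lambda(d)$ it is precisely here that the choice of the Friedrichs extension of $P_c$ enters, since one has to verify that it corresponds to the boundary condition that survives the substitution. A secondary point is to transport the Aubin--Talenti rigidity (equality in Sobolev forces a single bubble) through the transformation so as to conclude that the extremisers are \emph{exactly} the $\lambda W_c(\mu\,\cdot)$; and, for $-\lambda(d)<c<0$, to invoke the standard characterisation of the equality cases in P\'olya--Szeg\H{o} and in $(\ref{riesz rearrangement inequality})$ so that no extremisers are lost in the symmetrisation step.
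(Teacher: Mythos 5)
The paper itself gives no proof of this theorem: it simply refers to \cite[Proposition 7.2]{KillipMiaoVisanZhangZheng-energy} and adds the one-line remark that non-attainment for $c>0$ reflects the loss of compactness under translations. So there is no in-paper argument to compare against; judged on its own terms, your sketch is correct and is in substance the argument of the cited reference. The computational core checks out: since $\rho^2-(d-2)\rho=c$ by $(\ref{define rho})$, the profile $|x|^{-\rho}$ is $P_c$-harmonic, and for radial $f$ the substitution $f=|x|^{-\rho}g(|x|)$ kills the potential term (the cross term $gg'$ integrates by parts against $(\rho^2+c)r^{-2\rho-2}g^2$ and cancels exactly), leaving $\|f\|_{\dot H^1_c}^2=\omega_{d-1}\int_0^\infty r^{d-1-2\rho}|g'|^2\,dr$; the further substitution $g(r)=h(r^\beta)$ with $\beta=\frac{d-2-2\rho}{d-2}$ then gives $\|f\|_{\dot H^1_c}^2=\beta\|h\|_{\dot H^1}^2$ and $\|f\|_{L^{\alpha^\star+2}}^{\alpha^\star+2}=\beta^{-1}\|h\|_{L^{\alpha^\star+2}}^{\alpha^\star+2}$, so the radial problem is isomorphic to the radial classical Sobolev problem and the Aubin--Talenti bubbles centred at the origin pull back to precisely the $W_c$ of $(\ref{define W_c})$. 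The symmetrisation step for $-\lambda(d)<c<0$ and the translation/strict-monotonicity argument for $c>0$ then run exactly as in the proof of Theorem \ref{theorem gagliardo nirenberg inequality}, including the use of $(\ref{convergence in L2})$ on $W_0(\cdot-x_n)$, which is legitimate since $W_0\in\dot H^1$.

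Two of the difficulties you flag at the end are lighter than you make them. First, the theorem only asserts that the sharp constant \emph{is attained by} functions of the form $\lambda W_c(\mu x)$, not that these exhaust the extremisers; so for $-\lambda(d)<c<0$ you do not need the equality cases of P\'olya--Szeg\H{o} or of $(\ref{riesz rearrangement inequality})$ --- it suffices that symmetrisation yields $C_{\text{SE}}(c)=C_{\text{SE}}(c,\text{rad})$ and that the radial supremum is attained. Second, the bijectivity worry largely evaporates: since $\|\cdot\|_{\dot H^1_c}\sim\|\cdot\|_{\dot H^1}$ and points have zero capacity for $d\ge 3$, $\dot H^1_c$ coincides with $\dot H^1$ as a set, the Sobolev quotient is continuous there, and the supremum can be computed on the dense class $C^\infty_0(\R^d\setminus\{0\})$, where the change of variables and the integration by parts (no boundary terms) are elementary; one then only has to verify directly that $W_c\in\dot H^1_c$, i.e. the integrability near $0$ and $\infty$ forced by the Friedrichs choice of root $\rho<\frac{d-2}{2}$. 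With these simplifications your plan closes without gaps.
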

We refer the reader to \cite[Proposition 7.2]{KillipMiaoVisanZhangZheng-energy} for the proof of this result. Note that the non-existence of optimizers to the Sobolev embedding inequality for $c>0$ is a consequence of the failure of compactness due to translation. If we restrict our consideration to radial functions, the compactness is restored. To end this section, we recall some properties related to $W_c$ (see \cite[Section 7]{KillipMiaoVisanZhangZheng-energy} for more details). It is not difficult to verify that $W_c$ solves the elliptic equation
\[
P_c W_c = |W_c|^{\alpha^\star} W_c.
\] 
This implies in particular
\begin{align}
\|W_c\|^2_{\dot{H}^1_c} = \|W_c\|^{\alpha^\star+2}_{L^{\alpha^\star+2}}. \label{property W_c}
\end{align}
Combining with Theorem $\ref{theorem sobolev embedding inequality}$, we have for $-\lambda(d)<c<0$,
\begin{align}
\|W_c\|^2_{\dot{H}^1_c} &= \|W_c\|^{\alpha^\star+2}_{L^{\alpha^\star+2}} = C_{\text{SE}}(c)^{-d}, \label{property W_c 1}\\
E_c(W_c) &= \frac{1}{2}\|W_c\|^2_{\dot{H}^1_c} -\frac{1}{\alpha^\star+2} \|W_c\|^{\alpha^\star+2}_{L^{\alpha^\star+2}} = d^{-1}C_{\text{SE}}(c)^{-d}. \label{property W_c 2}
\end{align}
Note that $(\ref{property W_c 1})$ and $(\ref{property W_c 2})$ hold true for $c=0$. In particular, we have for any $c\ne 0$ satisfying $c>-\lambda(d)$,
\begin{align}
C_{\text{SE}}(c)=\|W_{\cbar}\|^{-\frac{2}{d}}_{\dot{H}^1_{\cbar}} = \|W_{\cbar}\|^{-\frac{\alpha^\star+2}{d}}_{L^{\alpha^\star+2}} = [dE_{\cbar}(W_{\cbar})]^{-\frac{1}{d}}. \label{relation sharp sobolev embedding constant}
\end{align}
Similarly, we have for $c>0$ that
\begin{align}
\|W_c\|^2_{\dot{H}^1_c} &= \|W_c\|^{\alpha^\star+2}_{L^{\alpha^\star+2}} = C_{\text{SE}}(c,\text{rad})^{-d}, \label{property W_c 1 radial}\\
E_c(W_c) &= \frac{1}{2}\|W_c\|^2_{\dot{H}^1_c} -\frac{1}{\alpha^\star+2} \|W_c\|^{\alpha^\star+2}_{L^{\alpha^\star+2}} = d^{-1}C_{\text{SE}}(c,\text{rad})^{-d}. \label{property W_c 2 radial} \\
C_{\text{SE}}(c,\text{rad})&=\|W_{c}\|^{-\frac{2}{d}}_{\dot{H}^1_c} = \|W_{c}\|^{-\frac{\alpha^\star+2}{d}}_{L^{\alpha^\star+2}} = [dE_{c}(W_{c})]^{-\frac{1}{d}}. \label{relation sharp sobolev embedding constant radial}
\end{align}
\section{Virial identities} \label{section virial identities}
\setcounter{equation}{0}
In this section, we derive virial identities and localized virial estimates associated to the $(\nlsct)$. Given a real valued function $\chi$, we define the virial potential by
\begin{align}
V_\chi(t):= \int \chi(x)|u(t,x)|^2 dx. \label{virial potential} 
\end{align}
By a direct computation, we have the following result.
\begin{lem} \label{lem derivative virial potential}
Let $d\geq 3$ and $c>-\lambda(d)$. If $u: I \times \R^d \rightarrow \C$ is a smooth-in-time and Schwartz-in-space solution to 
\[
i\partial_t u -P_c u = N(u),
\]
with $N(u)$ satisfying $\imemph{(N(u)\overline{u})}=0$, then we have for any $t\in I$,
\begin{align}
\frac{d}{dt} V_\chi (t)= 2 \int_{\R^d}\nabla \chi(x) \cdot  \imemph	{(\overline{u}(t,x) \nabla u(t,x))} dx, \label{first derivative viral potential}
\end{align}
and
\begin{equation}
\begin{aligned}
\frac{d^2}{dt^2} V_\chi(t) = &-\int \Delta^2 \chi(x) |u(t,x)|^2  dx  +  4 \sum_{j,k=1}^d \int \partial^2_{jk} \chi(x) \reemph{(\partial_k u(t,x) \partial_j \overline{u}(t,x))} dx  \\
&+4c\int \nabla \chi(x) \cdot \frac{x}{|x|^4} |u(t,x)|^2 dx+ 2\int \nabla \chi(x)\cdot \{N(u), u\}_p(t,x) dx, 
\end{aligned} \label{second derivative virial potential} 
\end{equation}
where $\{f, g\}_p :=\reemph{(f\nabla \overline{g} - g \nabla \overline{f})}$ is the momentum bracket.
\end{lem}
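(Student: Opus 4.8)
The plan is to establish both formulas by differentiating $V_\chi(t)=\int\chi(x)|u(t,x)|^2\,dx$ directly under the integral sign, which is legitimate because $u$ is smooth in time and Schwartz in space and $\chi$ (together with its derivatives up to order four) grows at most polynomially, so that all the integrals below converge absolutely; the only mildly delicate point, the convergence near $x=0$ of the terms weighted by $|x|^{-2}$, is handled by the sharp Hardy inequality $(\ref{sharp hardy inequality})$ together with $d\ge 3$. For $(\ref{first derivative viral potential})$ one differentiates once: rewriting the equation as $\partial_t u=i\Delta u-ic|x|^{-2}u-iN(u)$ and using that $|u|^2$ is real and that $\im{(\overline u N(u))}=\im{(N(u)\overline u)}=0$ by hypothesis, one gets $\re{(\overline u\,\partial_t u)}=-\im{(\overline u\Delta u)}$, hence $\frac{d}{dt}V_\chi=-2\int\chi\,\im{(\overline u\Delta u)}\,dx$. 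The pointwise identity $\overline u\Delta u=\nabla\!\cdot\!(\overline u\nabla u)-|\nabla u|^2$, whose second summand is real, gives $\im{(\overline u\Delta u)}=\nabla\!\cdot\!\im{(\overline u\nabla u)}$, and one integration by parts transferring the divergence onto $\chi$ produces $(\ref{first derivative viral potential})$.

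For $(\ref{second derivative virial potential})$ I would differentiate $(\ref{first derivative viral potential})$ once more, obtaining $\frac{d^2}{dt^2}V_\chi=2\int\nabla\chi\cdot\im{\big(\partial_t(\overline u\nabla u)\big)}\,dx$. Using $\partial_t\overline u=i\big(P_c\overline u+\overline{N(u)}\big)$ (here $\overline{P_c u}=P_c\overline u$ since $P_c$ has real coefficients) and $\nabla\partial_t u=-i\nabla\big(P_c u+N(u)\big)$, a short computation gives
\[
\im{\big(\partial_t(\overline u\nabla u)\big)}=\re{\big[(P_c\overline u)\nabla u-\overline u\nabla(P_c u)\big]}+\re{\big[\overline{N(u)}\nabla u-\overline u\nabla N(u)\big]}.
\]
I would then split the right-hand side into three contributions. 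In the first bracket the two copies of $c|x|^{-2}\overline u\nabla u$ cancel, leaving $\big[\overline u\nabla\Delta u-(\Delta\overline u)\nabla u\big]-c|u|^2\nabla(|x|^{-2})$; since $\nabla(|x|^{-2})=-2x|x|^{-4}$, the potential part equals $2c|u|^2\,x|x|^{-4}$, which is real and contributes exactly $4c\int\nabla\chi\cdot\frac{x}{|x|^4}|u|^2\,dx$. Conjugating each factor in the second bracket turns it into $\re{\big(N(u)\nabla\overline u-u\nabla\overline{N(u)}\big)}=\{N(u),u\}_p$, contributing $2\int\nabla\chi\cdot\{N(u),u\}_p\,dx$. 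The remaining ``free'' piece, $2\int\nabla\chi\cdot\re{\big[\overline u\nabla\Delta u-(\Delta\overline u)\nabla u\big]}\,dx$, must be shown to equal $-\int\Delta^2\chi\,|u|^2\,dx+4\sum_{j,k=1}^d\int\partial^2_{jk}\chi\,\re{(\partial_k u\,\partial_j\overline u)}\,dx$; summing the three contributions then yields $(\ref{second derivative virial potential})$.

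I expect this last point, the classical Schr\"odinger virial identity for the free part, to be the only genuinely computational step, and hence the main (though routine) obstacle. The route I would take is to record the pointwise identity
\[
\re{\big(\overline u\,\partial_j\Delta u-(\Delta\overline u)\,\partial_j u\big)}=\tfrac12\,\partial_j\big(\Delta|u|^2\big)-2\sum_{k=1}^d\partial_k\re{(\partial_j\overline u\,\partial_k u)},
\]
which follows from $\Delta|u|^2=2\re{(\overline u\Delta u)}+2|\nabla u|^2$ together with $\partial_j|\nabla u|^2=2\sum_k\re{(\partial_k\overline u\,\partial_j\partial_k u)}$ and $\re{((\Delta\overline u)\partial_j u)}=\re{(\partial_j\overline u\,\Delta u)}$. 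Multiplying by $2\,\partial_j\chi$, summing in $j$, and integrating by parts until every derivative acts on $\chi$: the first term becomes $\int\nabla\chi\cdot\nabla(\Delta|u|^2)=-\int\Delta\chi\,\Delta|u|^2=-\int\Delta^2\chi\,|u|^2$, while the second becomes $4\sum_{j,k}\int\partial^2_{jk}\chi\,\re{(\partial_j\overline u\,\partial_k u)}$. The only subtlety is the bookkeeping of signs, and all boundary terms vanish thanks to the Schwartz decay of $u$ and the polynomial growth of the derivatives of $\chi$. As a sanity check, taking $\chi(x)=|x|^2$ collapses the two formulas to the familiar virial identities $\frac{d}{dt}\|xu\|_{L^2}^2=4\,\im{\Big(\int\overline u\,x\cdot\nabla u\,dx\Big)}$ and $\frac{d^2}{dt^2}\|xu\|_{L^2}^2=8\|\nabla u\|_{L^2}^2+8c\int|x|^{-2}|u|^2\,dx+4\int x\cdot\{N(u),u\}_p\,dx$.
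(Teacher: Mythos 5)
Your proposal is correct and is exactly the "direct computation" the paper invokes without writing out: differentiate under the integral, use the equation and the hypothesis $\im{(N(u)\overline u)}=0$ for the first derivative, then split the second derivative into the free, potential, and nonlinear contributions, with the potential term coming from $\nabla(|x|^{-2})=-2x|x|^{-4}$ and the free part reduced to the classical virial identity by the pointwise identity you record and two integrations by parts. All the individual identities check out (including the cancellation of the $c|x|^{-2}\overline u\nabla u$ terms and the conjugation turning the nonlinear bracket into $\{N(u),u\}_p$), and the $\chi=|x|^2$ sanity check agrees with Lemma \ref{lem global virial identity}.
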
 
We note that if $N(u) = - |u|^\alpha u$, then
\[
\{N(u), u\}_p =\frac{\alpha}{\alpha+2}\nabla(|u|^{\alpha+2}).
\]
Using this fact, we immediately have the following result.
\begin{coro}\label{coro derivative virial potential}
Let $d\geq 3$ and $c>-\lambda(d)$. If $u: I\times \R^d \rightarrow \C$ is a smooth-in-time and Schwartz-in-space solution to the $(\nlsce)$, then we have for any $t\in I$,
\begin{equation}
\begin{aligned}
\frac{d^2}{dt^2} V_\chi(t) =& -\int \Delta^2 \chi(x) |u(t,x)|^2  dx  +  4 \sum_{j,k=1}^d \int \partial^2_{jk} \chi(x) \reemph{(\partial_k u(t,x) \partial_j \overline{u}(t,x))} dx  \\
&+4c\int \nabla \chi(x) \cdot \frac{x}{|x|^{4}} |u(t,x)|^2 dx-\frac{2 \alpha}{\alpha+2} \int \Delta \chi(x) |u(t,x)|^{\alpha+2} dx. 
\end{aligned} \label{second derivative virial potential application} 
\end{equation}
\end{coro}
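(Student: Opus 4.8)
The plan is to derive Corollary~$\ref{coro derivative virial potential}$ as an immediate specialization of Lemma~$\ref{lem derivative virial potential}$ to the focusing power nonlinearity $N(u)=-|u|^\alpha u$. The first thing to check is that this nonlinearity satisfies the hypothesis of Lemma~$\ref{lem derivative virial potential}$: since $N(u)\overline{u}=-|u|^\alpha u\overline{u}=-|u|^{\alpha+2}$ is real-valued, we have $\im{(N(u)\overline{u})}=0$, so formula $(\ref{second derivative virial potential})$ is available for smooth-in-time, Schwartz-in-space solutions of the $(\nlsct)$. Consequently only the last term $2\int\nabla\chi(x)\cdot\{N(u),u\}_p(t,x)\,dx$ of $(\ref{second derivative virial potential})$ needs rewriting; the first three terms carry over to $(\ref{second derivative virial potential application})$ verbatim.

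The next step is to compute the momentum bracket $\{N(u),u\}_p=\re{(N(u)\nabla\overline{u}-u\nabla\overline{N(u)})}$. Writing $w:=|u|^2$, so that $|u|^\alpha=w^{\alpha/2}$ and $N(u)=-w^{\alpha/2}u$, one has $\nabla\overline{N(u)}=-\overline{u}\,\nabla(w^{\alpha/2})-w^{\alpha/2}\nabla\overline{u}$, hence $u\,\nabla\overline{N(u)}=-w\,\nabla(w^{\alpha/2})-w^{\alpha/2}u\nabla\overline{u}$; in $N(u)\nabla\overline{u}-u\,\nabla\overline{N(u)}$ the contributions $\mp w^{\alpha/2}u\nabla\overline{u}$ cancel, leaving $N(u)\nabla\overline{u}-u\,\nabla\overline{N(u)}=w\,\nabla(w^{\alpha/2})$. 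This expression is already real, and by the chain rule $w\,\nabla(w^{\alpha/2})=\frac{\alpha}{2}w^{\alpha/2}\nabla w=\frac{\alpha}{\alpha+2}\nabla(w^{(\alpha+2)/2})$, so that $\{N(u),u\}_p=\frac{\alpha}{\alpha+2}\nabla(|u|^{\alpha+2})$ — the identity recorded just before the statement.

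Finally, I would substitute this into $(\ref{second derivative virial potential})$, so that $2\int\nabla\chi\cdot\{N(u),u\}_p\,dx=\frac{2\alpha}{\alpha+2}\int\nabla\chi\cdot\nabla(|u|^{\alpha+2})\,dx$, and integrate by parts in $x$ — legitimate since $u(t,\cdot)$ is Schwartz and the derivatives of $\chi$ are under control — to obtain $-\frac{2\alpha}{\alpha+2}\int\Delta\chi\,|u|^{\alpha+2}\,dx$. Together with the unchanged first three terms of $(\ref{second derivative virial potential})$, this gives exactly $(\ref{second derivative virial potential application})$. There is essentially no obstacle here: the real content of the matter is Lemma~$\ref{lem derivative virial potential}$ itself — whose proof requires differentiating $V_\chi(t)$ twice in time and using the equation $i\partial_t u-P_c u=N(u)$ to eliminate the time derivatives — while the reduction carried out above is only the elementary momentum-bracket computation followed by one integration by parts.
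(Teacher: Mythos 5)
Your proposal is correct and follows exactly the paper's route: the paper records the identity $\{N(u),u\}_p=\frac{\alpha}{\alpha+2}\nabla(|u|^{\alpha+2})$ for $N(u)=-|u|^\alpha u$ immediately before the corollary and declares the result immediate, which is precisely your substitution into $(\ref{second derivative virial potential})$ followed by one integration by parts. Your momentum-bracket computation and the verification that $\im{(N(u)\overline{u})}=0$ are both correct; you have simply written out the details the paper leaves implicit.
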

We now have the following standard virial identity for the $(\nlsct)$. 
\begin{lem} \label{lem global virial identity}
Let $d\geq 3$ and $c>-\lambda(d)$. Let $u_0 \in H^1$ be such that $|x|u_0 \in L^2$ and $u: I \times \R^d \rightarrow \C$ the corresponding solution to the $(\nlsce)$. Then, $|x| u \in C(I, L^2)$. Moreover, for any $t\in I$,
\begin{align}
\frac{d^2}{dt^2} \|x u(t)\|^2_{L^2} = 8\|u(t)\|^2_{\dot{H}^1_c} - \frac{4d\alpha}{\alpha+2}\|u(t)\|^{\alpha+2}_{L^{\alpha+2}}. \label{global virial identity}
\end{align}
\end{lem}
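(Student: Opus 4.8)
The plan is to invoke Corollary \ref{coro derivative virial potential} with the weight $\chi(x)=|x|^2$, for which $\nabla\chi(x)=2x$, $\partial^2_{jk}\chi(x)=2\delta_{jk}$, $\Delta\chi(x)=2d$ and $\Delta^2\chi\equiv0$. With these choices the four terms on the right of $(\ref{second derivative virial potential application})$ become, respectively, $0$, $8\int|\nabla u(t,x)|^2\,dx$, $8c\int|x|^{-2}|u(t,x)|^2\,dx$, and $-\frac{4d\alpha}{\alpha+2}\int|u(t,x)|^{\alpha+2}\,dx$; by $(\ref{H^1 norm P_c})$ the first three add up to $8\|u(t)\|^2_{\dot{H}^1_c}$, which is precisely $(\ref{global virial identity})$. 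Hence the real content of the lemma is the \emph{rigorous justification} of this formal computation for data only in $H^1$ with $|x|u_0\in L^2$, a class far from the smooth, Schwartz-in-space solutions assumed in Corollary \ref{coro derivative virial potential}; in particular, the inverse-square singularity at the origin prevents Schwartz regularity of the solution even when $u_0$ is smooth and compactly supported.

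First I would replace $|x|^2$ by a truncation $\chi_R(x):=R^2\theta(x/R)$ with $\theta\in C^\infty_0(\R^d)$ radial, $\theta(y)=|y|^2$ for $|y|\le1$, and $\theta$ constant for large $|y|$. This yields, uniformly in $R$, the bounds $|\nabla\chi_R|\lesssim|x|$, $|\partial^2\chi_R|\lesssim1$, $|\Delta\chi_R|\lesssim1$, $\|\Delta^2\chi_R\|_{L^\infty}\lesssim R^{-2}$, together with the pointwise limits $\chi_R\uparrow|x|^2$, $\nabla\chi_R\to2x$, $\partial^2_{jk}\chi_R\to2\delta_{jk}$, $\Delta\chi_R\to2d$. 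Since $\chi_R$ is bounded and Lipschitz, the first-derivative identity $(\ref{first derivative viral potential})$ with $\chi_R$ already holds for $C(I,H^1)$ solutions (the integrand $\nabla\chi_R\cdot\overline u\,\nabla u$ being in $L^1$); for the second-derivative identity $(\ref{second derivative virial potential application})$ with $\chi_R$ I would argue by density, approximating $u_0$ by data whose solutions are regular enough to run the computation behind Lemma \ref{lem derivative virial potential} --- for instance data in the domain of $P_c$, or after regularizing the potential with $c|x|^{-2}$ replaced by $c(|x|^2+\varepsilon)^{-1}$ --- and passing to the limit using the $H^1$ local well-posedness and continuous dependence of Theorem \ref{theorem energy method Okazawa-Suzuki-Yokota} together with the boundedness of $\chi_R$ and its derivatives.

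Next I would use the $\chi_R$-identity to produce $|x|u\in C(I,L^2)$. Integrating $\frac{d^2}{dt^2}V_{\chi_R}$ twice in $t$ and using the uniform bounds above gives, on every compact $I'\subset I$,
\[
V_{\chi_R}(t)\le V_{\chi_R}(0)+C|t|\,\|xu_0\|_{L^2}\|\nabla u_0\|_{L^2}+Ct^2\sup_{s\in I'}\Big(\|u(s)\|^2_{\dot{H}^1}+\big\||x|^{-2}|u(s)|^2\big\|_{L^1}+\|u(s)\|^{\alpha+2}_{L^{\alpha+2}}\Big),
\]
the supremum being finite since $u\in C(I',H^1)$, by the Hardy inequality $(\ref{sharp hardy inequality})$ and Gagliardo--Nirenberg, with the right-hand side independent of $R$. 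Letting $R\to\infty$ and using Fatou gives $|x|u(t)\in L^2$ with locally uniform bounds; the continuity $|x|u\in C(I,L^2)$ then follows by combining weak $L^2$-continuity of $t\mapsto|x|u(t)$ (read off from the equation) with continuity of $t\mapsto\|xu(t)\|^2_{L^2}$, the latter being available because the family $V_{\chi_R}$ is locally equi-Lipschitz (by the bound $|V_{\chi_R}'(t)|\lesssim\|xu(t)\|_{L^2}\|\nabla u(t)\|_{L^2}$) and converges pointwise to $\|xu(\cdot)\|^2_{L^2}$.

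Finally, with $|x|u\in C(I,L^2)$ in hand, I would pass to the limit $R\to\infty$ in the second-derivative $\chi_R$-identity: the biharmonic term is $O(R^{-2})$; the Hessian term tends to $8\int|\nabla u|^2$ by bounded convergence; the potential term tends to $8c\int|x|^{-2}|u|^2$, since $\big|\nabla\chi_R(x)\cdot x|x|^{-4}\big|\lesssim|x|^{-2}$ uniformly in $R$ and $\int|x|^{-2}|u|^2<\infty$ by Hardy, so dominated convergence applies; and the nonlinear term tends to $-\frac{4d\alpha}{\alpha+2}\int|u|^{\alpha+2}$ by dominated convergence. The uniform-in-$R$ bound on $\frac{d^2}{dt^2}V_{\chi_R}$ over compact subintervals lets the two time derivatives pass through the limit, so $t\mapsto\|xu(t)\|^2_{L^2}$ is $C^2$ and satisfies $(\ref{global virial identity})$. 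I expect the principal obstacle to be exactly this transition from smooth to $H^1$ solutions --- setting up the approximation so that $V_{\chi_R}$ and its first two time derivatives all converge --- together with the control near $x=0$, where the factor $\nabla\chi_R\cdot x|x|^{-4}$ carries the inverse-square weight; it is precisely the hypothesis $c>-\lambda(d)$ and the Hardy inequality $(\ref{sharp hardy inequality})$ that keep $\int|x|^{-2}|u|^2$ finite and legitimize these limits, while $|x|u_0\in L^2$ supplies the decay needed to remove the truncation at spatial infinity.
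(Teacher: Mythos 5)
Your proof is correct and takes essentially the same approach as the paper: both apply Corollary~\ref{coro derivative virial potential} with $\chi(x)=|x|^2$, so that $\Delta^2\chi=0$, $\partial^2_{jk}\chi=2\delta_{jk}$, $\nabla\chi\cdot x|x|^{-4}=2|x|^{-2}$, $\Delta\chi=2d$, and the first three terms combine into $8\|u(t)\|^2_{\dot{H}^1_c}$ via $(\ref{H^1 norm P_c})$. The truncation and density argument you supply is precisely the ``standard approximation argument'' that the paper omits, deferring instead to Proposition~6.5.1 of \cite{Cazenave}.
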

\begin{proof}
The first claim follows from the standard approximation argument, we omit the proof and refer the reader to \cite[Proposition 6.5.1]{Cazenave} for more details. It remains to show $(\ref{global virial identity})$. Applying Corollary $\ref{coro derivative virial potential}$ with $\chi(x)=|x|^2$, we have
\begin{align*}
\frac{d^2}{dt^2}V_{|x|^2}(t)=\frac{d^2}{dt^2} \|x u(t)\|^2_{L^2} &= 8 \int |\nabla u(t,x)|^2 + c  |x|^{-2}|u(t,x)|^2 dx - \frac{4d \alpha}{\alpha+2} \int |u(t,x)|^{\alpha+2}dx \\
&= 8 \|u(t)\|^2_{\dot{H}^1_c} - \frac{4d \alpha}{\alpha+2} \|u(t)\|^{\alpha+2}_{L^{\alpha+2}}.
\end{align*}
This gives $(\ref{global virial identity})$.
\end{proof}
In order to prove the blowup for the $(\nlsct)$ with radial data, we need localized virial estimates. To do so, we introduce a function $\theta: [0,\infty) \rightarrow [0,\infty)$ satisfying
\begin{align}
\theta(r) =\left\{\begin{array}{cl}
r^2 & \text{if } 0\leq r \leq 1, \\
\text{const.} &\text{if } r \geq 2, 
\end{array}
\right.
\quad \text{and} \quad \theta''(r) \leq 2 \quad \text{for }  r\geq 0.  \label{condition of varphi}
\end{align}
Note that the precise constant here is not important. For $R>1$, we define the radial function
\begin{align}
\varphi_R(x) = \varphi_R(r):=R^2 \theta(r/R), \quad r=|x|. \label{define rescaled varphi}
\end{align}
It is easy to see that
\begin{align}
2-\varphi''_R(r) \geq 0, \quad 2-\frac{\varphi'_R(r)}{r} \geq 0, \quad 2d- \Delta \varphi_R(x) \geq 0. \label{property rescaled varphi}
\end{align}
Here the last inequality follows from the fact $\Delta= \partial^2_r + \frac{d-1}{r} \partial_r$. 
\begin{lem} \label{lem localized virial identity}
Let $d\geq 3, c>-\lambda(d), R>1$ and $\varphi_R$ be as in $(\ref{define rescaled varphi})$. Let $u: I\times \R^d \rightarrow \C$ be a radial solution to the $(\nlsce)$. Then for any $t\in I$,
\begin{align}
\frac{d^2}{dt^2}V_{\varphi_R} (t)\leq 8\|u(t)\|^2_{\dot{H}^1_c} - \frac{4d\alpha}{\alpha+2} \|u(t)\|^{\alpha+2}_{L^{\alpha+2}} + O \Big( R^{-2} + R^{-\frac{(d-1)\alpha}{2}} \|u(t)\|^{\frac{\alpha}{2}}_{\dot{H}^1_c} \Big). \label{localized virial identity}
\end{align}
\end{lem}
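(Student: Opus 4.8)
The plan is to apply Corollary~\ref{coro derivative virial potential} with $\chi=\varphi_R$ and then compare the outcome, term by term, with the right-hand side of the standard virial identity $(\ref{global virial identity})$, controlling each discrepancy. Since $\varphi_R$ is radial we have $\nabla\varphi_R(x)=\varphi_R'(r)\frac{x}{r}$ with $r=|x|$, so that $\nabla\varphi_R(x)\cdot\frac{x}{|x|^4}=\frac{\varphi_R'(r)}{r}\cdot\frac{1}{r^2}$, and the Hessian of $\varphi_R$ has eigenvalue $\varphi_R''(r)$ in the radial direction $x/r$ and eigenvalue $\frac{\varphi_R'(r)}{r}$ (multiplicity $d-1$) in the orthogonal directions. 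Because $u$ is radial, $\nabla u$ is parallel to $x$, hence $\sum_{j,k}\partial^2_{jk}\varphi_R\,\reemph{(\partial_k u\,\partial_j\overline{u})}=\varphi_R''(r)\,|\nabla u|^2$. Thus $(\ref{second derivative virial potential application})$ reduces to
\[
\frac{d^2}{dt^2}V_{\varphi_R}(t)=-\int\Delta^2\varphi_R\,|u|^2\,dx+4\int\varphi_R''\,|\nabla u|^2\,dx+4c\int\frac{\varphi_R'(r)}{r}\,\frac{|u|^2}{r^2}\,dx-\frac{2\alpha}{\alpha+2}\int\Delta\varphi_R\,|u|^{\alpha+2}\,dx.
\]

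Next I would write $\varphi_R''=2-(2-\varphi_R'')$, $\frac{\varphi_R'}{r}=2-(2-\frac{\varphi_R'}{r})$ and $\Delta\varphi_R=2d-(2d-\Delta\varphi_R)$ so as to split off exactly $8\|u(t)\|^2_{\dot H^1_c}-\frac{4d\alpha}{\alpha+2}\|u(t)\|^{\alpha+2}_{L^{\alpha+2}}$ (using $(\ref{H^1 norm P_c})$), leaving four error terms, to be estimated using mass conservation $(\ref{mass conservation})$ and the equivalence $\|\nabla u\|_{L^2}\sim\|u\|_{\dot H^1_c}$ from Hardy's inequality. First, $\Delta^2\varphi_R$ is supported in $R\le|x|\le2R$ with $|\Delta^2\varphi_R|\lesssim R^{-2}$, so $\big|\int\Delta^2\varphi_R\,|u|^2\big|\lesssim R^{-2}\|u\|^2_{L^2}=O(R^{-2})$. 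Second, by $(\ref{property rescaled varphi})$ we have $2-\varphi_R''\ge0$, so the kinetic error $-4\int(2-\varphi_R'')|\nabla u|^2\le0$ and may simply be discarded. Third, $2-\frac{\varphi_R'}{r}$ is nonnegative, bounded, and supported in $|x|\ge R$, where $\frac{1}{r^2}\le R^{-2}$; hence the potential error $-4c\int(2-\frac{\varphi_R'}{r})\frac{|u|^2}{r^2}$ is $O(|c|R^{-2}\|u\|^2_{L^2})=O(R^{-2})$ (and is in fact $\le0$ when $c>0$).

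The only term carrying genuine content is the nonlinear error $\frac{2\alpha}{\alpha+2}\int(2d-\Delta\varphi_R)|u|^{\alpha+2}$, which is nonnegative and, by $(\ref{property rescaled varphi})$, supported in $|x|\ge R$ with a bounded weight, hence $\lesssim\int_{|x|\ge R}|u|^{\alpha+2}$. Here I would invoke the radial Sobolev (Strauss) estimate $\sup_{|x|\ge R}|u(x)|\lesssim R^{-\frac{d-1}{2}}\|u\|^{1/2}_{L^2}\|\nabla u\|^{1/2}_{L^2}$, valid for radial $H^1$ functions in $d\ge2$, to bound
\[
\int_{|x|\ge R}|u|^{\alpha+2}\,dx\le\Big(\sup_{|x|\ge R}|u|\Big)^\alpha\int_{|x|\ge R}|u|^2\,dx\lesssim R^{-\frac{(d-1)\alpha}{2}}\,\|u\|^{\frac{\alpha}{2}+2}_{L^2}\,\|\nabla u\|^{\frac{\alpha}{2}}_{L^2}.
\]
By mass conservation and $\|\nabla u\|_{L^2}\sim\|u\|_{\dot H^1_c}$, this is $O\big(R^{-\frac{(d-1)\alpha}{2}}\|u(t)\|^{\frac{\alpha}{2}}_{\dot H^1_c}\big)$, with implicit constant depending only on $\|u_0\|_{L^2}$. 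Collecting the four bounds yields $(\ref{localized virial identity})$.

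One routine caveat remains: Corollary~\ref{coro derivative virial potential} is stated for smooth-in-time, Schwartz-in-space solutions, so strictly $(\ref{localized virial identity})$ is first established at that regularity and then extended to general radial $H^1$ solutions by the usual approximation argument; this is easier than the analogue for Lemma~\ref{lem global virial identity} since $\varphi_R$ is bounded, so $V_{\varphi_R}$ is automatically finite and $C^2$ in $t$ along the approximating flow. I expect the main obstacle to be precisely the nonlinear error term: the radial assumption is indispensable there, since it is the radial Sobolev embedding that lets one localize the $L^{\alpha+2}$-norm to the exterior of a ball with the decisive gain of $R^{-(d-1)\alpha/2}$; the other three error terms are soft, following only from the support properties and size bounds for $\varphi_R$ together with conservation of mass.
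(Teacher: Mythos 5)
Your argument is correct and follows essentially the same route as the paper: apply $(\ref{second derivative virial potential application})$ with $\chi=\varphi_R$, use radiality to reduce the Hessian term to $\varphi_R''(r)|\nabla u|^2$, peel off the main terms via $\varphi_R''=2-(2-\varphi_R'')$, $\varphi_R'/r=2-(2-\varphi_R'/r)$, $\Delta\varphi_R=2d-(2d-\Delta\varphi_R)$, discard or absorb the sign-definite and $O(R^{-2})$ errors, and control the exterior $L^{\alpha+2}$ tail by the radial Sobolev inequality $(\ref{usual radial sobolev embedding})$ with $s=1/2$ together with mass conservation. If anything, your treatment of the inverse-square error for $c<0$ (bounding it by $O(|c|R^{-2}\|u\|_{L^2}^2)$ rather than asserting a sign) is slightly more careful than the paper's, which claims a nonpositive combination that is only immediate for $c>0$; the conclusion is unaffected since the discrepancy is $O(R^{-2})$ in any case.
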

\begin{proof}
We apply $(\ref{second derivative virial potential application})$ for $\chi(x) =\varphi_R(x)$ to get
\begin{align*}
\frac{d^2}{dt^2} V_{\varphi_R}(t) =& -\int \Delta^2 \varphi_R(x) |u(t,x)|^2  dx  +  4 \sum_{j,k=1}^d \int \partial^2_{jk} \varphi_R(x) \re{(\partial_k u(t,x) \partial_j \overline{u}(t,x))} dx  \\
&+4c\int \nabla \varphi_R(x) \cdot \frac{x}{|x|^{4}} |u(t,x)|^2 dx-\frac{2 \alpha}{\alpha+2} \int \Delta \varphi_R(x) |u(t,x)|^{\alpha+2} dx. 
\end{align*}
Since $\varphi_R(x)=|x|^2$ for $|x|\leq R$, we use $(\ref{global virial identity})$ to have
\begin{align}
\begin{aligned}
\frac{d^2}{dt^2}V_{\varphi_R} (t)&=8\|u(t)\|^2_{\dot{H}^1_c} - \frac{4d\alpha}{\alpha+2} \|u(t)\|^{\alpha+2}_{L^{\alpha+2}} - 8\|u(t)\|^2_{\dot{H}^1_c(|x|>R)} + \frac{4d\alpha}{\alpha+2} \|u(t)\|^{\alpha+2}_{L^{\alpha+2}(|x|>R)} \\
&\mathrel{\phantom{=}} -\int_{|x|>R} \Delta^2 \varphi_R |u(t)|^2  dx  +  4 \sum_{j,k=1}^d \int_{|x|>R} \partial^2_{jk} \varphi_R \re{(\partial_k u(t) \partial_j \overline{u}(t))} dx  \\
&\mathrel{\phantom{=}} +4c\int_{|x|>R} \nabla \varphi_R \cdot \frac{x}{|x|^{4}} |u(t)|^2 dx-\frac{2 \alpha}{\alpha+2} \int_{|x|>R} \Delta \varphi_R |u(t)|^{\alpha+2} dx. 
\end{aligned}
\label{localized virial estimate proof}
\end{align}
Since $|\Delta \varphi_R |\lesssim 1$ and $|\Delta^2 \varphi_R| \lesssim R^{-2}$, we have
\begin{align*}
\frac{d^2}{dt^2}V_{\varphi_R} (t)&= 8\|u(t)\|^2_{\dot{H}^1_c} - \frac{4d\alpha}{\alpha+2} \|u(t)\|^{\alpha+2}_{L^{\alpha+2}} + 4\sum_{j,k=1}^d \int_{|x|>R} \partial^2_{jk} \varphi_R \re{(\partial_k u(t) \partial_j \overline{u}(t))} dx \\
&\mathrel{\phantom{=8\|u(t)\|^2_{\dot{H}^1_c}}} + 4c \int_{|x|>R} \nabla \varphi_R \cdot \frac{x}{|x|^4} |u(t)|^2 dx - 8\|u(t)\|^2_{\dot{H}^1_c(|x|>R)} \\
&\mathrel{\phantom{=8\|u(t)\|^2_{\dot{H}^1_c}}} + O \Big( \int_{|x|> R} R^{-2} |u(t)|^2 + |u(t)|^{\alpha+2} dx \Big).
\end{align*}
Using $(\ref{property rescaled varphi})$ and the fact that
\[
\partial_j =\frac{x_j}{r} \partial_r, \quad \partial^2_{jk} = \Big(\frac{\delta_{jk}}{r}-\frac{x_j x_k}{r^3}\Big) \partial_r + \frac{x_j x_k}{r^2} \partial^2_r,
\]
we see that
\[
\sum_{j,k=1}^d \partial^2_{jk}\varphi_R \partial_k u \partial_j \overline{u} = \varphi''_R(r) |\partial_r u|^2 \leq 2 |\partial_r u|^2 = 2|\nabla u|^2,
\]
and 
\[
\nabla \varphi_R \cdot x = \varphi'_R \frac{x}{r} \cdot x = \varphi'_R r \leq 2r^2=2|x|^2. 
\]
Therefore
\[
4\sum_{j,k=1}^d \int_{|x|>R} \partial^2_{jk} \varphi_R \re{(\partial_k u \partial_j \overline{u})} dx + 4c \int_{|x|>R} \nabla \varphi_R \cdot x |x|^{-4} |u|^2 dx - 8\|u(t)\|^2_{\dot{H}^1_c(|x|>R)} \leq 0.
\]
The conservation of mass then implies
\begin{align*}
\frac{d^2}{dt^2} V_{\varphi_R}(t) &\leq 8\|u(t)\|^2_{\dot{H}^1_c} -\frac{4d\alpha}{\alpha+2} \|u(t)\|^{\alpha+2}_{L^{\alpha+2}} + O \Big( \int_{|x|> R} R^{-2} |u(t)|^2 + |u(t)|^{\alpha+2} dx \Big) \\
&\leq 8\|u(t)\|^2_{\dot{H}^1_c} -\frac{4d\alpha}{\alpha+2} \|u(t)\|^{\alpha+2}_{L^{\alpha+2}} + O\Big(R^{-2} + \|u(t)\|^{\alpha+2}_{L^{\alpha+2}(|x|>R)}\Big).
\end{align*}
It remains to bound $\|u(t)\|^{\alpha+2}_{L^{\alpha+2}(|x|>R)}$. To do this, we recall the following radial Sobolev embedding (\cite{Strauss, ChoOzawa}).
\begin{lem}[Radial Sobolev embedding \cite{Strauss, ChoOzawa}]
Let $d\geq 2$ and $\frac{1}{2}\leq s <1$. Then for any radial function $f$, 
\begin{align}
\sup_{x\ne 0} |x|^{\frac{d-2s}{2}}|f(x)| \leq C(d,s) \|f\|^{1-s}_{L^2} \|f\|^s_{\dot{H}^1}. \label{usual radial sobolev embedding}
\end{align}
Moreover, the above inequality also holds for $d\geq 3$ and $s=1$.
\end{lem}
Since $\dot{H}^1 \sim \dot{H}^1_c$, we have in particular
\begin{align}
\sup_{x\ne 0} |x|^{\frac{d-1}{2}}|f(x)| \lesssim \|f\|^{\frac{1}{2}}_{L^2} \|f\|^{\frac{1}{2}}_{\dot{H}^1_c}. \label{radial sobolev embedding inverse square}
\end{align}
Using $(\ref{radial sobolev embedding inverse square})$ and the conservation of mass, we estimate
\begin{align*}
\|u(t)\|^{\alpha+2}_{L^{\alpha+2}(|x|>R)} &\leq \Big(\sup_{|x|>R} |u(t)| \Big)^{\alpha} \|u(t)\|^2_{L^2} \\
&\lesssim R^{-\frac{(d-1)\alpha}{2}} \Big(\sup_{|x|>R} |x|^{\frac{d-1}{2}} |u(t)| \Big)^{\alpha} \|u(t)\|^2_{L^2} \\
&\lesssim R^{-\frac{(d-1)\alpha}{2}} \|u(t)\|^{\frac{\alpha}{2}}_{\dot{H}^1_c} \|u(t)\|^{\frac{\alpha}{2}+2}_{L^2} \lesssim R^{-\frac{(d-1)\alpha}{2}} \|u(t)\|^{\frac{\alpha}{2}}_{\dot{H}^1_c}.
\end{align*}
The proof is complete.
\end{proof}
The localized virial estimate given in Lemma $\ref{lem localized virial identity}$ is not enough to show blowup solutions in the mass-critical case, i.e. $\alpha=\alpha_\star$. In this case, we need a refined version of Lemma $\ref{lem localized virial identity}$. We follow the argument of \cite{OgawaTsutsumi1} (see also \cite{BoulengerHimmelsbachLenzmann}). 
\begin{lem} \label{lem localized virial identity mass-critical}
Let $d\geq 3, c>-\lambda(d), R>1$ and $\varphi_R$ be as in $(\ref{define rescaled varphi})$. Let $u: I\times \R^d \rightarrow \C$ be a radial solution to the mass-critical $(\nlsce)$, i.e. $\alpha=\alpha_\star$. Then for any $\ep>0$ and any $t\in I$,
\begin{align}
\frac{d^2}{dt^2}V_{\varphi_R} (t)\leq 16E_c(u_0) -4\int_{|x|>R} \Big(\chi_{1,R} - \frac{\ep}{d+2} \chi_{2,R}^{\frac{d}{2}} \Big) |\nabla u(t)|^2dx + O \Big( R^{-2} +\ep R^{-2} + \ep^{-\frac{2}{d-2}}R^{-2} \Big), \label{localized virial identity mass-critical}
\end{align}
where
\begin{align}
\chi_{1,R} = 2-\varphi''_R, \quad \chi_{2,R}= 2d-\Delta \varphi_R. \label{define chi}
\end{align}
\end{lem}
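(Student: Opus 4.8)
The plan is to rerun the computation from the proof of Lemma~\ref{lem localized virial identity}, this time exploiting the algebraic coincidence $4d\alpha=16$ which holds precisely when $\alpha=\alpha_\star=\tfrac4d$. Starting from $(\ref{second derivative virial potential application})$ with $\chi=\varphi_R$, using $\varphi_R(x)=|x|^2$ on $\{|x|\le R\}$ together with $(\ref{global virial identity})$ exactly as in the derivation of $(\ref{localized virial estimate proof})$, the contribution of the inner region equals the part over $\{|x|\le R\}$ of $8\|u(t)\|^2_{\dot H^1_c}-\frac{4d\alpha}{\alpha+2}\|u(t)\|^{\alpha+2}_{L^{\alpha+2}}$. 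Since $\frac{4d\alpha}{\alpha+2}=\frac{16}{\alpha+2}$ in the mass-critical case, adding and subtracting the missing parts over $\{|x|>R\}$ turns the leading term into $8\|u(t)\|^2_{\dot H^1_c}-\frac{16}{\alpha+2}\|u(t)\|^{\alpha+2}_{L^{\alpha+2}}=16E_c(u(t))=16E_c(u_0)$, at the cost of a finite number of integrals supported in $\{|x|>R\}$.

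Next I would dispose of the harmless corrections. The biharmonic term is $O(R^{-2})$ by $|\Delta^2\varphi_R|\lesssim R^{-2}$ and mass conservation. Radial symmetry gives $\sum_{j,k}\partial^2_{jk}\varphi_R\,\re{(\partial_k u\,\partial_j\overline u)}=\varphi''_R|\partial_r u|^2=\varphi''_R|\nabla u|^2$, so the Hessian term together with the part of $8\|u\|^2_{\dot H^1_c}$ over $\{|x|>R\}$ collapses, using $(\ref{property rescaled varphi})$, to $-4\int_{|x|>R}(2-\varphi''_R)|\nabla u|^2\,dx=-4\int_{|x|>R}\chi_{1,R}|\nabla u|^2\,dx$. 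The inverse-square term together with the part of $8c\int|x|^{-2}|u|^2$ over $\{|x|>R\}$ equals $-4c\int_{|x|>R}|x|^{-2}\big(2-\tfrac{\varphi'_R(|x|)}{|x|}\big)|u|^2\,dx$, which is $\le0$ when $c>0$ (since $\varphi'_R(r)/r\le2$) and is $O(R^{-2})$ when $c<0$ (the integrand being $\lesssim|x|^{-2}|u|^2\lesssim R^{-2}|u|^2$ there, with mass conserved); throughout one uses $\|\cdot\|_{\dot H^1_c}\sim\|\cdot\|_{\dot H^1}$. Finally, the part of $\frac{16}{\alpha+2}\|u\|^{\alpha+2}_{L^{\alpha+2}}$ over $\{|x|>R\}$ and the remaining nonlinear term $-\frac{2\alpha}{\alpha+2}\int_{|x|>R}\Delta\varphi_R|u|^{\alpha+2}\,dx$ combine, using $\tfrac8\alpha=2d$ and $\tfrac{2\alpha}{\alpha+2}=\tfrac4{d+2}$, into $\frac4{d+2}\int_{|x|>R}(2d-\Delta\varphi_R)|u|^{\alpha+2}\,dx=\frac4{d+2}\int_{|x|>R}\chi_{2,R}|u|^{\alpha+2}\,dx$. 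At this stage
\[
\frac{d^2}{dt^2}V_{\varphi_R}(t)\le 16E_c(u_0)-4\int_{|x|>R}\chi_{1,R}|\nabla u|^2\,dx+\frac4{d+2}\int_{|x|>R}\chi_{2,R}|u|^{\alpha+2}\,dx+O(R^{-2}).
\]

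The heart of the matter, and the step I expect to be the main obstacle, is to absorb the last integral into the weighted gradient term of $(\ref{localized virial identity mass-critical})$, i.e. to show
\[
\int_{|x|>R}\chi_{2,R}|u|^{\alpha+2}\,dx\le\ep\int_{|x|>R}\chi_{2,R}^{d/2}|\nabla u|^2\,dx+O\big(R^{-2}+\ep R^{-2}+\ep^{-\frac{2}{d-2}}R^{-2}\big).
\]
For this I would follow the Ogawa--Tsutsumi device (as in \cite{OgawaTsutsumi1}, see also \cite{BoulengerHimmelsbachLenzmann}). Put $w:=\chi_{2,R}^{d/4}u$; because $\alpha=\tfrac4d$ one has the exact identity $\chi_{2,R}|u|^{\alpha+2}=|w|^{\alpha}|u|^2$, and $w$ is supported in $\{|x|\ge R\}$ since $\chi_{2,R}\equiv0$ on $\{|x|\le R\}$ by $(\ref{define chi})$ and $(\ref{condition of varphi})$. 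Hence, by mass conservation,
\[
\int_{|x|>R}\chi_{2,R}|u|^{\alpha+2}\,dx\le\|w\|_{L^\infty}^{\alpha}\,\|u_0\|_{L^2}^2 .
\]
The cutoff $\chi_{2,R}^{d/4}$ is radial, bounded by $(2d)^{d/4}$, and satisfies $|\nabla(\chi_{2,R}^{d/4})|\lesssim R^{-1}$ — this is where $d\ge2$ is used: $\chi_{2,R}$ is nonnegative with $\|D^2\chi_{2,R}\|_{L^\infty}\lesssim R^{-2}$, hence $|\nabla\chi_{2,R}|\lesssim\chi_{2,R}^{1/2}R^{-1}$, and therefore $|\nabla(\chi_{2,R}^{d/4})|\lesssim\chi_{2,R}^{(d-2)/4}R^{-1}\lesssim R^{-1}$, no negative power of $\chi_{2,R}$ ever appearing. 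Thus $w$ is a radial $\dot H^1$ function, supported in $\{|x|\ge R\}$, with $\|w\|_{L^2}\lesssim\|u_0\|_{L^2}$ and $\|\nabla w\|_{L^2}\lesssim R^{-1}\|u_0\|_{L^2}+\|\chi_{2,R}^{d/4}\nabla u\|_{L^2}$. Feeding $w$ into the radial Sobolev embedding $(\ref{radial sobolev embedding inverse square})$ and using mass conservation yields
\[
\int_{|x|>R}\chi_{2,R}|u|^{\alpha+2}\,dx\lesssim R^{-\frac{2(d-1)}{d}}\Big(R^{-2}+\int_{|x|>R}\chi_{2,R}^{d/2}|\nabla u|^2\,dx\Big)^{1/d},
\]
and a single application of Young's inequality — with exponents $d$ and $\tfrac{d}{d-1}$, absorbing the power $1/d$ of the gradient integral into $\ep\int_{|x|>R}\chi_{2,R}^{d/2}|\nabla u|^2$ and collecting the leftover powers of $R$ into $R^{-2}$ — gives the displayed estimate (even with $\ep^{-1/(d-1)}$ in place of $\ep^{-2/(d-2)}$, which is coarser for $\ep\le1$). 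Substituting back and recalling $\frac{2\alpha}{\alpha+2}=\frac4{d+2}$ produces $(\ref{localized virial identity mass-critical})$. The only delicate points are the bookkeeping of the $R$- and $\ep$-powers in this last step and checking that every cutoff power $\chi_{2,R}^{s}$ that arises has $s\ge0$, so that all integrals are finite; both are routine.
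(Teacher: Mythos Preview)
Your proof is correct and follows the same Ogawa--Tsutsumi strategy as the paper: rewrite the virial identity to expose $16E_c(u_0)$, isolate the weighted gradient term $-4\int\chi_{1,R}|\nabla u|^2$, and absorb the nonlinear remainder $\frac{4}{d+2}\int\chi_{2,R}|u|^{\alpha_\star+2}$ via the substitution $w=\chi_{2,R}^{d/4}u$, a radial Sobolev embedding, and Young's inequality. Your handling of the inverse-square correction for $c<0$ (bounding it by $O(R^{-2})$ via $|x|^{-2}\le R^{-2}$ and mass conservation) is in fact more explicit than the paper's.

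The one genuine difference is which radial Sobolev embedding you invoke. The paper uses the $s=1$ endpoint $(\ref{usual radial sobolev embedding})$, namely $\sup_{x\ne0}|x|^{\frac{d-2}{2}}|f(x)|\lesssim\|\nabla f\|_{L^2}$, which gives $\|w\|_{L^\infty}^{4/d}\lesssim R^{-\frac{2(d-2)}{d}}\|\nabla w\|_{L^2}^{4/d}$ and, after Young with exponents $p=\tfrac d2$, $q=\tfrac d{d-2}$, the error $\ep^{-\frac{2}{d-2}}R^{-2}$ stated in the lemma. You instead use the $s=\tfrac12$ version $(\ref{radial sobolev embedding inverse square})$, which produces the exponent $\ep^{-\frac{1}{d-1}}$. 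Since $\frac{1}{d-1}\le\frac{2}{d-2}$ for $d\ge3$, your error term is \emph{smaller} for $\ep\le1$ (and dominated by $\ep R^{-2}$ for $\ep>1$), so your inequality is actually slightly sharper and certainly implies $(\ref{localized virial identity mass-critical})$. Your parenthetical ``which is coarser for $\ep\le1$'' has the direction backwards, but this is a harmless slip.
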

\begin{proof}
Using $(\ref{localized virial estimate proof})$ with $\alpha=\alpha_\star =\frac{4}{d}$ and $\sum_{j,k} \partial^2_{jk} \varphi_R \partial_k u \partial_j \overline{u} = \varphi''_R |\partial_r u|^2$ and rewriting $\varphi''_R = 2 - (2-\varphi''_R)$ and $\Delta \varphi_R = 2d - (2d-\Delta \varphi_R)$, we have
\begin{align*}
\frac{d^2}{dt^2}V_{\varphi_R}(t)&= 16E_c(u(t)) -\int_{|x|>R} \Delta^2\varphi_R |u(t)|^2 dx - 4 \int_{|x|>R} (2-\varphi''_R) |\partial_r u(t)|^2 dx  \\
&\mathrel{\phantom{=16 E_c(u(t)) -\int_{|x|>R} \Delta^2\varphi_R |u(t)|^2 dx }} +\frac{4}{d+2} \int_{|x|>R} (2d-\Delta \varphi_R) |u(t)|^{\frac{4}{d}+2} dx \\
&\mathrel{\phantom{=}} + 8\int_{|x|>R} |\partial_r u(t)|^2 dx +4c\int_{|x|>R} \nabla \varphi_R \cdot x|x|^{-4} |u(t)|^2 dx -8\|u(t)\|^2_{\dot{H}^1_c(|x|>R)} \\
&\leq 16 E_c(u_0) + O(R^{-2}) - 4 \int_{|x|>R}\chi_{1,R} |\nabla u(t)|^2 dx   +\frac{4}{d+2} \int_{|x|>R} \chi_{2,R} |u(t)|^{\frac{4}{d}+2} dx.
\end{align*}
We now bound the last term. Using the radial Sobolev embedding $(\ref{usual radial sobolev embedding})$ with $s=1$ and the conservation of mass, we estimate
\begin{align*}
\int_{|x|>R} \chi_{2,R} |u(t)|^{\frac{4}{d}+2} dx & = \int_{|x|>R} |\chi_{2,R}^{\frac{d}{4}} u(t)|^{\frac{4}{d}} |u(t)|^2 dx \\
&\leq \Big(\sup_{|x|>R} |\chi^{\frac{d}{4}}_{2,R}(x) u(t,x)|\Big)^{\frac{4}{d}} \|u(t)\|^2_{L^2} \\
& \lesssim R^{-\frac{2(d-2)}{d}} \Big\| \nabla\Big(\chi_{2,R}^{\frac{d}{4}} u(t) \Big) \Big\|^{\frac{4}{d}}_{L^2} \|u(t)\|^2_{L^2} \\
&\lesssim R^{-\frac{2(d-2)}{d}} \Big\| \nabla\Big(\chi_{2,R}^{\frac{d}{4}} u(t) \Big) \Big\|^{\frac{4}{d}}_{L^2}.
\end{align*}
We next use the Young inequality $ab \lesssim \ep a^p + \ep^{-\frac{q}{p}} b^q$ with $\frac{1}{p}+\frac{1}{q}=1$ and $\ep>0$ an arbitrary real number to have
\[
R^{-\frac{2(d-2)}{d}} \Big\| \nabla\Big(\chi_{2,R}^{\frac{d}{4}} u(t) \Big) \Big\|^{\frac{4}{d}}_{L^2} \lesssim \ep \Big\| \nabla\Big(\chi_{2,R}^{\frac{d}{4}} u(t) \Big) \Big\|^2_{L^2} + O\Big( \ep^{-\frac{2}{d-2}} R^{-2}\Big).
\] 
Here we apply the Young inequality with $p=\frac{d}{2}$ and $q=\frac{d}{d-2}$. It is not hard to check $|\nabla(\chi_{2,R}^{d/4} )| \lesssim R^{-1}$ for $|x|>R$. Thus the conservation of mass implies
\[
\Big\| \nabla\Big(\chi_{2,R}^{\frac{d}{4}} u(t) \Big) \Big\|^2_{L^2} \lesssim R^{-2} + \Big\|\chi_{2,R}^{\frac{d}{4}} \nabla u(t) \Big\|^2_{L^2}.
\]
Combining the above estimates, we prove $(\ref{localized virial identity mass-critical})$.
\end{proof}
\section{Global existence} \label{section global existence}
\setcounter{equation}{0}
In this section, we give the proofs of global existence given Theorem $\ref{theorem global blowup NLS mass-critical inverse square}$ and Theorem $\ref{theorem global blowup NLS intercritical inverse square}$. 
\subsection{Mass-critical case} \label{subsection global mass-critical}
Thanks to the local well-posedness given in Theorem $\ref{theorem energy method Okazawa-Suzuki-Yokota}$, it suffices to bound $\|u(t)\|_{H^1_c}$ for all $t$ in the existence time. Applying $(\ref{sharp gagliardo nirenberg constant})$ with $\alpha=\alpha_\star$, we see that 
\[
C_{\text{GN}}(c) = \frac{\alpha_\star+2}{2\|Q_{\cbar}\|^{\alpha_\star}_{L^2}}.
\] 
By the definition of energy, we have
\[
\|u(t)\|^2_{\dot{H}^1_c} = 2 E_c(u(t)) +\frac{2}{\alpha_\star+2} \|u(t)\|^{\alpha_\star+2}_{L^{\alpha_\star+2}}.
\]
The sharp Gagliardo-Nirenberg inequality and the conservations of mass and energy imply
\begin{align*}
\|u(t)\|^2_{\dot{H}^1_c} &\leq 2 E_c(u(t)) + \frac{2}{\alpha_\star+2} C_{\text{GN}}(c) \|u(t)\|^{\alpha_\star}_{L^2}\|u(t)\|^2_{\dot{H}^1_c} \\
& = 2 E_c(u_0) + \frac{2}{\alpha_\star+2} C_{\text{GN}}(c) \|u_0|^{\alpha_\star}_{L^2}\|u(t)\|^2_{\dot{H}^1_c} \\
& = 2 E_c(u_0) + \Big(\frac{\|u_0\|_{L^2}}{\|Q_{\cbar}\|_{L^2}}\Big)^{\alpha_\star} \|u(t)\|^2_{\dot{H}^1_c}.
\end{align*}
Thus, 
\[
\Big[1- \Big(\frac{\|u_0\|_{L^2}}{\|Q_{\cbar}\|_{L^2}}\Big)^{\alpha_\star} \Big]\|u(t)\|^2_{\dot{H}^1_c} \leq 2 E_c(u_0).
\]
Since $\|u_0\|_{L^2}<\|Q_{\cbar}\|_{L^2}$, the above estimate shows the boundedness of $\|u(t)\|_{\dot{H}^1_c}$. Hence $\|u(t)\|_{H^1_c}$ is bounded by the conservation of mass. This proves the global existence of Theorem $\ref{theorem global blowup NLS mass-critical inverse square}$. 
\begin{rem} \label{rem proof global NLS mass-critical inverse square}
Let us show Item 3 of Remark $\ref{rem global blowup NLS mass-critical inverse square}$. Let $-\lambda(d)<c<0$ and $M_c > \|Q_c\|_{L^2}$. Let $\lambda = M_c/\|Q_c\|_{L^2}>1$. Set $u_0(x)= \lambda Q_c(x)$. We have $\|u_0\|_{L^2} = M_c$ and 
\begin{align*}
E_c(u_0) = E_c(\lambda Q_c) &= \frac{\lambda^2}{2}\|Q_c\|^2_{\dot{H}^1_c} - \frac{\lambda^{\alpha_\star+2}}{\alpha_\star+2} \|Q_c\|^{\alpha_\star+2}_{L^{\alpha_\star+2}} \\
&=\lambda^{\alpha_\star+2} E_c(Q_c) -\frac{\lambda^{\alpha_\star+2} - \lambda^2}{2} \|Q_c\|^2_{\dot{H}^1_c}.
\end{align*}
Since $E_c(Q_c)=0$ and $\lambda>1$, we see that $E_c(u_0)<0$. On the other hand, it is obvious that $u_0$ is radial. Thus by Item 2 of Theorem $\ref{theorem global blowup NLS mass-critical inverse square}$, we see that the corresponding solution with initial data $u_0$ blows up in finite time. \newline
\indent We next show for $c>0$ that if $u_0$ is radial and satisfies $\|u_0\|_{L^2}<\|Q_{c,\text{rad}}\|_{L^2}$, then the corresponding solution exists globally. It follows similarly as the beginning of Subsection $\ref{subsection global mass-critical}$ by using the sharp radial Gagliardo-Nirenberg inequality
\[
\|f\|_{L^{\alpha_\star+2}}^{\alpha_\star+2} \leq C_{\text{GN}}(c,\text{rad}) \|f\|^{\alpha_\star}_{L^2} \|f\|^2_{\dot{H}^1_c}, \quad f \text{ radial}.
\]
Note also that by Item 4 of Remark $\ref{rem ground state equation}$, we have
\[
C_{\text{GN}}(c,\text{rad}) = \frac{\alpha_\star+2}{2\|Q_{c,\text{rad}}\|^{\alpha_\star}_{L^2}}.
\]
To complete the proof of Item 3, we show that for any $M_c>\|Q_{c,\text{rad}}\|_{L^2}$, there exists $u_0\in H^1$ radial satisfying $\|u_0\|_{L^2} = M_c$ and the corresponding solution blows up in finite time. We proceed as above. Let $\lambda = M_c/\|Q_{c,\text{rad}}\|_{L^2}>1$ and set $u_0(x) = \lambda Q_{c,\text{rad}}(x)$. We see that $\|u_0\|_{L^2} = M_c$ and
\begin{align*}
E_c(u_0) = E_c(\lambda Q_{c, \text{rad}}) &= = \frac{\lambda^2}{2}\|Q_{c,\text{rad}}\|^2_{\dot{H}^1_c} - \frac{\lambda^{\alpha_\star+2}}{\alpha_\star+2} \|Q_{c,\text{rad}}\|^{\alpha_\star+2}_{L^{\alpha_\star+2}} \\
&=\lambda^{\alpha_\star+2} E_c(Q_{c,\text{rad}}) -\frac{\lambda^{\alpha_\star+2} - \lambda^2}{2} \|Q_{c,\text{rad}}\|^2_{\dot{H}^1_c}.
\end{align*} 
Since $Q_{c,\text{rad}}$ is a solution to the $(\ref{ground state equation radial})$, we see that $E_c(Q_{c,\text{rad}})=0$. This shows that $E_c(u_0)<0$. Thus the corresponding solution blows up in finite time. 
\end{rem}
\begin{rem} \label{rem existence blowup solution mass-critical radial}
Let us show Item 5 of Remark $\ref{rem global blowup NLS mass-critical inverse square}$, that is to show when $c>0$ there exists a radial blowup solution to the mass-critical $(\nlsct)$ with $\|u_0\|_{L^2}=\|Q_{c,\text{rad}}\|_{L^2}$. Since $Q_{c,\text{rad}}$ is a solution to the elliptic equation
\[
-P_c Q_{c,\text{rad}} -Q_{c,\text{rad}} + Q_{c,\text{rad}}^{\alpha_\star+1}=0,
\]
it is easy to see that $u(t) = e^{it}Q_{c,\text{rad}}$ is a solution to the mass-critical $(\nlsct)$. Then a direct computation shows that for any $0<T<+\infty$, the function 
\[
u_T(t,x) =  \frac{1}{|t-T|^{d/2}} e^{-i\frac{|x|^2}{4(t-T)} + \frac{i}{t-T}} Q_{c,\text{rad}}\Big(\frac{x}{t-T}\Big)
\]
 is also a solution to the mass-critical $(\nlsct)$ which blows up at $T$ and $\|u_T(0)\|_{L^2} = \|Q_{c,\text{rad}}\|_{L^2}$. 
\end{rem}
\subsection{Intercritical case}
Again thanks to the local well-posedness of the $(\nlsct)$ given in Theorem $\ref{theorem energy method Okazawa-Suzuki-Yokota}$. It suffices to show that $\|u(t)\|_{H^1_c}$ is bounded as long as $t$ belongs to the existence time. Let $u_0 \in H^1$ be such that $(\ref{condition below ground state NLS intercritical inverse square})$ and $(\ref{condition global existence NLS intercritical inverse square})$ hold. By the definition of energy and multiplying both sides of $E_c(u(t))$ by $M(u(t))^\sigma$, the sharp Gagliardo-Nirenberg inequality $(\ref{sharp gagliardo nirenberg inequality})$ implies
\begin{align}
E_c(u(t)) M(u(t))^\sigma &=\frac{1}{2} \Big( \|u(t)\|_{\dot{H}^1_c} \|u(t)\|^\sigma_{L^2}\Big)^2 - \frac{1}{\alpha+2} \|u(t)\|^{\alpha+2}_{L^{\alpha+2}} \|u(t)\|^{2\sigma}_{L^2} \nonumber \\
& \geq \frac{1}{2} \Big( \|u(t)\|_{\dot{H}^1_c} \|u(t)\|^\sigma_{L^2}\Big)^2 - \frac{C_{\text{GN}}(c)}{\alpha+2} \|u(t)\|^{\frac{4-(d-2)\alpha}{2} + 2\sigma}_{L^2} \|u(t)\|^{\frac{d\alpha}{2}}_{\dot{H}^1_c} \nonumber \\
& = f(\|u(t)\|_{\dot{H}^1_c} \|u(t)\|^\sigma_{L^2}), \label{estimate f}
\end{align}
where 
\begin{align}
f(x)=\frac{1}{2} x^2 -\frac{C_{\text{GN}}(c)}{\alpha+2}x^{\frac{d\alpha}{2}}. \label{define f}
\end{align}
Using $(\ref{relation kinetic})$ and $(\ref{relation energy kinetic})$, we see that
\begin{align}
f(K(c)) = \frac{d\alpha-4}{2d\alpha} K(c)^2 = H(c). \label{value f}
\end{align}
We have from $(\ref{estimate f})$, the conservations of mass and energy and the assumption $(\ref{condition below ground state NLS intercritical inverse square})$ that
\begin{align}
f(\|u(t)\|_{\dot{H}^1_c} \|u(t)\|^\sigma_{L^2})\leq E_c(u_0) M(u_0)^\sigma <H(c). \label{estimate f global intercritical}
\end{align}
Using this together with $(\ref{condition global existence NLS intercritical inverse square})$, $(\ref{value f})$ and $(\ref{estimate f global intercritical})$, the continuity argument shows
\[
\|u(t)\|_{\dot{H}^1_c} \|u(t)\|^\sigma_{L^2} <K(c),
\]
for any $t$ as long as the solution exists. The conservation of mass then implies the boundedness of $\|u(t)\|_{H^1_c}$. \newline
\indent The global existence of Theorem $\ref{theorem global blowup NLS intercritical inverse square radial}$ is proved similarly as above using Item 4 of Remark $\ref{rem ground state equation}$.
\section{Blowup} \label{section blowup}
\setcounter{equation}{0}
This section is devoted to the proofs of blowup solutions given in Theorem $\ref{theorem global blowup NLS mass-critical inverse square}$, Theorem $\ref{theorem global blowup NLS intercritical inverse square}$ and Theorem $\ref{theorem blowup NLS energy-critical inverse square}$. 
\subsection{Mass-critical case} 
Let us consider the case $E_c(u_0)<0$ and $xu_0 \in L^2$. By the standard virial identity $(\ref{global virial identity})$, 
\[
\frac{d^2}{dt^2} \|x u(t)\|^2_{L^2} = 8\|u(t)\|^2_{\dot{H}^1_c} -\frac{4d\alpha_\star}{\alpha_\star+2} \|u(t)\|^{\alpha_\star+2}_{L^{\alpha_\star+2}} = 16 E_c(u_0)<0.
\]
By the classical argument of Glassey \cite{Glassey}, it follows that the solution $u$ blows up in finite time. \newline
\indent We next consider the case $E_c(u_0)<0$ and $u_0$ is radial. Applying the localized virial estimate $(\ref{localized virial identity mass-critical})$, we have
\begin{align*}
\frac{d^2}{dt^2}V_{\varphi_R} (t)\leq 16E_c(u_0) -4\int_{|x|>R} \Big(\chi_{1,R} - \frac{\ep}{d+2} \chi_{2,R}^{\frac{d}{2}} \Big) |\nabla u(t)|^2dx + O \Big( R^{-2} + \ep^{-\frac{2}{d-2}}R^{-2} \Big), 
\end{align*}
where $\chi_{1,R} = 2-\varphi''_R$ and $\chi_{2,R}= 2d-\Delta \varphi_R$. We seek for a radial function $\varphi_R$ defined by $(\ref{define rescaled varphi})$ so that
\begin{align}
\chi_{1,R} - \frac{\ep}{d+2} \chi_{2,R}^{\frac{d}{2}} \geq 0, \quad \forall r>R, \label{positive condition}
\end{align}
for a sufficiently small $\ep>0$. If $(\ref{positive condition})$ is satisfied, then by choosing $R>1$ sufficiently large depending on $\ep$, we see that
\[
\frac{d^2}{dt^2}V_{\varphi_R} (t)\leq 8E_c(u_0)<0,
\]
for any $t$ in the existence time. This shows that the solution $u$ must blow up in finite time. It remains to show $(\ref{positive condition})$. To do so, we follow the argument of \cite{OgawaTsutsumi1}. Let us define a function
\[
\vartheta(r):= \left\{
\begin{array}{c l}
2r & \text{if } 0\leq r\leq 1, \\
2[r-(r-1)^3] &\text{if } 1<r\leq 1+1/\sqrt{3}, \\
\vartheta' <0 &\text{if } 1+ 1/\sqrt{3} <r < 2, \\
0 &\text{if } r\geq 2,
\end{array}
\right.
\]
and 
\begin{align*}
\theta(r):= \int_{0}^{r}\vartheta(s)ds.
\end{align*}
It is easy to see that $\theta$ satisfies $(\ref{condition of varphi})$. Define $\varphi_R$ as in $(\ref{define rescaled varphi})$. We will show that $(\ref{positive condition})$ holds true for this choice of $\varphi_R$. Indeed, by definition,
\[
\varphi'_R(r) = R\theta'(r/R) = R \vartheta(r/R), \quad \varphi''_R(r) = \theta''(r/R) = \vartheta'(r/R), \quad \Delta \varphi_R(x) = \varphi''_R(r) +\frac{d-1}{r}\varphi'_R(r).
\]
\indent When $r> (1+1/\sqrt{3})R$, we see that $\vartheta'(r/R) \leq 0$, so $\chi_{1,R}(r) = 2-\varphi''_R(r) \geq 2$. We also have $\chi_{2,R}(r) \leq C$ for some constant $C>0$. Thus by choosing $\ep>0$ small enough, we have $(\ref{positive condition})$. \newline
\indent When $R<r \leq (1+1/\sqrt{3})R$, we have
\[
\chi_{1,R}(r) = 6 \Big(\frac{r}{R}-1\Big)^2, \quad \chi_{2,R}(r) = 6\Big(\frac{r}{R}-1\Big)^2 \Big[1 + \frac{(d-1)(r/R-1)}{3r/R} \Big] <6\Big(\frac{r}{R}-1\Big)^2 \Big(1+ \frac{d-1}{3\sqrt{3}}\Big).
\]
Since $0<r/R-1<1/\sqrt{3}$, we can choose $\ep>0$ small enough, for instance,
\[
\ep < (d+2)\Big(1+ \frac{d-1}{3\sqrt{3}}\Big)^{-d/2}
\]
to get $(\ref{positive condition})$. The proof is complete. \defendproof
\begin{rem} \label{rem necessary condition blowup mass-critical inverse square}
We now show Item 4 of Remark $\ref{rem global blowup NLS mass-critical inverse square}$ that is to show the condition $E_c(u_0)<0$ is a sufficient condition but it is not necessary. Let $E_c>0$. We find data $u_0 \in H^1$ so that $E_c(u_0)=E_c$ and the corresponding solution $u$ blows up in finite time. We follow the standard argument (see e.g. \cite[Remark 6.5.8]{Cazenave}). Using the standard virial identity with $\alpha=\alpha_\star$, we have
\[
\frac{d^2}{dt^2}\|x u(t)\|^2_{L^2} = 16 E_c(u_0),
\]
hence 
\[
\|x u(t)\|^2_{L^2} = 8 t^2 E_c(u_0) + 4t \Big(\im{\int \overline{u}_0 x \cdot \nabla u_0} dx\Big) + \|x u_0\|_{L^2}^2=:f(t).
\]  
Note that if $f(t)$ takes negative values, then the solution $u$ must blow up in finite time. In order to make $f(t)$ takes negative values, we need
\begin{align}
\Big(\im{\int \overline{u}_0 x \cdot \nabla u_0} dx \Big)^2 > 2E_c(u_0) \|x u_0\|^2_{L^2}. \label{sufficient and necessary condition}
\end{align}
Now fix $\theta \in C^\infty_0(\R^d)$ a real-valued function and set $\psi(x)=e^{-i|x|^2} \theta(x)$. We see that $\psi \in C^\infty_0(\R^d)$ and 
\[
\im{\int \overline{\psi} x\cdot \nabla \psi dx} = -2 \int  |x|^2 \theta^2(x) dx <0. 
\]
We now set
\begin{align*}
\begin{aligned}
A &= \frac{1}{2} \|\psi\|^2_{\dot{H}^1_c},& B &= \frac{1}{\alpha_\star +2} \|\psi\|^{\alpha_\star+2}_{L^{\alpha_\star +2}}, \\
C &=\|x\psi\|^2_{L^2}, &  D &= -\im{\int \overline{\psi} x\cdot \nabla \psi dx}.
\end{aligned}
\end{align*}
Let $\lambda, \mu>0$ be chosen later and set $u_0(x) =\lambda \psi(\mu x)$. We will choose $\lambda, \mu>0$ so that $E_c(u_0) = E_c$ and $(\ref{sufficient and necessary condition})$ holds true. A direct computation shows
\[
E_c(u_0) = \lambda^2 \mu^2 \mu^{-d} \frac{1}{2}\|\psi\|^2_{\dot{H}^1_c} - \lambda^{\alpha_\star+2} \mu^{-d} \frac{1}{\alpha_\star+2} \|\psi\|^{\alpha_\star+2}_{L^{\alpha_\star+2}} = \lambda^2 \mu^{2-d} \Big(A-\frac{\lambda^{\alpha_\star}}{\mu^2} B\Big),
\]
and
\[
\im{\int \overline{u}_0 x \cdot\nabla u_0 dx} = \lambda^2 \mu^{-d} \im{\int \overline{\psi} x \cdot \nabla \psi dx} = -\lambda^2 \mu^{-d} D,
\]
and
\[
\|xu_0\|^2_{L^2} = \lambda^2 \mu^{-d-2} \|x\psi\|^2_{L^2} = \lambda^2 \mu^{-d-2} C.
\]
Thus, the conditions $E_c(u_0)=E_c$ and $(\ref{sufficient and necessary condition})$ yield
\begin{align}
\lambda^2 \mu^{2-d} &\Big(A-\frac{\lambda^{\alpha_\star}}{\mu^2}B\Big) = E_c, \label{condition 1}\\
\frac{D^2}{C}&> 2\Big(A-\frac{\lambda^{\alpha_\star}}{\mu^2}B\Big). \label{condition 2}
\end{align}
Fix $0<\ep < \min\Big\{A, \frac{D^2}{2C} \Big\}$ and choose
\[
\frac{\lambda^{\alpha_\star}}{\mu^2} B =A-\ep.
\]
It is obvious that $(\ref{condition 2})$ is satisfied. Condition $(\ref{condition 1})$ implies
\[
\ep\lambda^2 \mu^{2-d} = E_c \quad \text{or} \quad \ep \Big(\frac{B}{A-\ep}\Big)^{\frac{2-d}{2}} \lambda^{2+\frac{(2-d)\alpha_\star}{2}} = E_c.
\]
This holds true by choosing a suitable value of $\lambda$. 
\end{rem}
\subsection{Intercritical case}
We first consider the case $E_c(u_0) \geq 0$. We first show $(\ref{property blowup solution NLS intercritical inverse square})$. We have from $(\ref{estimate f})$ that
\[
f(\|u(t)\|_{\dot{H}^1_c} \|u(t)\|^\sigma_{L^2}) \leq E_c(u(t)) M(u(t))^\sigma,
\]
where $f$ is defined as in $(\ref{define f})$. Note that $f(K(c)) = H(c)$. By our assumption $(\ref{condition below ground state NLS intercritical inverse square})$, we have
\[
f(\|u(t)\|_{\dot{H}^1_c} \|u(t)\|^\sigma_{L^2}) < H(c).
\]
Using $(\ref{condition blowup NLS intercritical inverse square})$ and the continuity argument, we get
\[
\|u(t)\|_{\dot{H}^1_c} \|u(t)\|^\sigma_{L^2} > K(c),
\]
for any $t$ in the existence time. This proves $(\ref{property blowup solution NLS intercritical inverse square})$. \newline
\indent We next pick $\delta>0$ small enough so that 
\begin{align}
E_c(u_0) M(u_0)^\sigma \leq (1-\delta) H(c). \label{refined estimate condition}
\end{align} 
This implies 
\begin{align}
f(\|u(t)\|_{\dot{H}^1_c}\|u(t)\|^\sigma_{L^2}) \leq (1-\delta) H(c). \label{refined estimate f}
\end{align}
Using $(\ref{define f}), (\ref{relation kinetic})$ and $(\ref{relation energy kinetic})$, we have from $(\ref{refined estimate f})$ that
\[
\frac{d\alpha}{d\alpha-4} \Big(\frac{\|u(t)\|_{\dot{H}^1_c} \|u(t)\|^\sigma_{L^2}}{K(c)} \Big)^2 - \frac{4}{d\alpha-4} \Big( \frac{\|u(t)\|_{\dot{H}^1_c} \|u(t)\|^\sigma_{L^2}}{K(c)} \Big)^{\frac{d\alpha}{2}} \leq 1-\delta.
\]
The continuity argument shows that there exists $\delta'>0$ depending on $\delta$ so that
\begin{align}
\frac{\|u(t)\|_{\dot{H}^1_c} \|u(t)\|^\sigma_{L^2}}{K(c)} \geq 1+\delta' \quad \text{or}\quad \|u(t)\|_{\dot{H}^1_c} \|u(t)\|^\sigma_{L^2} \geq (1+\delta') K(c). \label{refined property blowup intercritical}
\end{align}
\indent We also have for $\ep>0$ small enough,
\begin{align}
8\|u(t)\|^2_{\dot{H}^1_c} -\frac{4d\alpha}{\alpha+2} \|u(t)\|^{\alpha+2}_{L^{\alpha+2}} + \ep \|u(t)\|^2_{\dot{H}^1_c} \leq -c<0, \label{refined virial estimate intercritical}
\end{align}
for any $t$ in the existence time. Indeed, multiplying the left hand side of $(\ref{refined virial estimate intercritical})$ with a conserved quantity $M(u(t))^\sigma$, we get
\[
\text{LHS}(\ref{refined virial estimate intercritical}) \times M(u(t))^\sigma = 4d\alpha E_c(u(t)) M(u(t))^\sigma + (8+\ep-2d\alpha) \|u(t)\|^2_{\dot{H}^1_c} M(u(t))^\sigma.
\] 
The conservations of mass and energy, $(\ref{refined estimate condition}), (\ref{refined property blowup intercritical})$ and $(\ref{relation energy kinetic})$ then yield
\begin{align*}
\text{LHS}(\ref{refined virial estimate intercritical}) \times M(u_0)^\sigma &\leq 4d\alpha (1-\delta) H(c) + (8+\ep -2d\alpha) (1+\delta')^2 K(c)^2 \\
&= 2(d\alpha-4) (1-\delta) K(c)^2 + (8+\ep-2d\alpha) (1+\delta')^2 K(c)^2 \\
&=  K(c)^2 \Big[ 2(d\alpha-4)(1-\delta - (1+\delta')^2) + \ep (1+\delta')^2 \Big].
\end{align*}
By taking $\ep>0$ small enough, we prove $(\ref{refined virial estimate intercritical})$. \newline
\indent Let us consider the case $xu_0 \in L^2$ satisfying $(\ref{condition below ground state NLS intercritical inverse square})$ and $(\ref{condition blowup NLS intercritical inverse square})$. By the standard virial identity $(\ref{global virial identity})$ and $(\ref{refined virial estimate intercritical})$,
\[
\frac{d^2}{dt^2} \|x u(t)\|^2_{L^2} = 8 \|u(t)\|^2_{\dot{H}^1_c} -\frac{4d\alpha}{\alpha+2} \|u(t)\|^{\alpha+2}_{L^{\alpha+2}} \leq -c <0. 
\]
This shows that the solution blows up in finite time. \newline
\indent We now consider the case $u_0$ is radial, and satisfies $(\ref{condition below ground state NLS intercritical inverse square})$ and $(\ref{condition blowup NLS intercritical inverse square})$. Using the localized virial estimate $(\ref{localized virial identity})$, we have
\[
\frac{d^2}{dt^2}V_{\varphi_R} (t)\leq 8\|u(t)\|^2_{\dot{H}^1_c} - \frac{4d\alpha}{\alpha+2} \|u(t)\|^{\alpha+2}_{L^{\alpha+2}} + O \Big( R^{-2} + R^{-\frac{(d-1)\alpha}{2}} \|u(t)\|^{\frac{\alpha}{2}}_{\dot{H}^1_c} \Big).
\]
We next use the Young inequality to bound
\[
R^{-\frac{(d-1)\alpha}{2}} \|u(t)\|^{\frac{\alpha}{2}}_{\dot{H}^1_c} \lesssim \ep \|u(t)\|^2_{\dot{H}^1_c} + \ep^{-\frac{\alpha}{4-\alpha}} R^{-\frac{2(d-1)\alpha}{4-\alpha}},
\]
for $\ep>0$ an arbitrary real number. We thus get
\[
\frac{d^2}{dt^2}V_{\varphi_R} (t)\leq 8\|u(t)\|^2_{\dot{H}^1_c} - \frac{4d\alpha}{\alpha+2} \|u(t)\|^{\alpha+2}_{L^{\alpha+2}} + \ep \|u(t)\|^2_{\dot{H}^1_c} + O\Big(R^{-2} +\ep^{-\frac{\alpha}{4-\alpha}} R^{-\frac{2(d-1)\alpha}{4-\alpha}}\Big).
\]
By taking $\ep>0$ small enough and $R>1$ large enough depending on $\ep$, we obtain from $(\ref{refined virial estimate intercritical})$ that
\[
\frac{d^2}{dt^2}V_{\varphi_R} (t)\leq -c/2<0.
\]
This shows that the solution must blow up in finite time. \newline
\indent The case $E_c(u_0)<0$ is similar and easier. We simply bound
\[
\frac{d^2}{dt^2}V_{\varphi_R}(t) \leq 4d\alpha E_c(u_0) - 2(d\alpha-4) \|u(t)\|^2_{\dot{H}^1_c} + \ep \|u(t)\|^2_{\dot{H}^1_c} + O \left(R^{-2} + \ep^{-\frac{\alpha}{4-\alpha}} R^{-\frac{2(d-1)\alpha}{4-\alpha}} \right).
\]
Since $d\alpha >4$, we can choose $\ep>0$ small enough so that 
\[
\frac{d^2}{dt^2} V_{\varphi_R}(t) \leq 4d\alpha E_c(u_0) + O \left(R^{-2} + \ep^{-\frac{\alpha}{4-\alpha}} R^{-\frac{2(d-1)\alpha}{4-\alpha}} \right).
\]
Taking $R>1$ large enough depending on $\ep$, we get
\[
\frac{d^2}{dt^2} V_{\varphi_R}(t) \leq 2d\alpha E_c(u_0) <0.
\]
This again implies that the solution must blow up in finite time. \newline
\indent The blowup of Theorem $\ref{theorem global blowup NLS intercritical inverse square radial}$ follows by the same argument as above and Item 4 of Remark $\ref{rem ground state equation}$.
\subsection{Energy-critical case}
We again only consider the case $E_c(u_0) \geq 0$, the case $E_c(u_0)<0$ is similar to the intercritical case. By definition of the energy and the sharp Sobolev embedding inequality $(\ref{sharp sobolev embedding})$,
\begin{align*}
E_c(u(t)) &=\frac{1}{2}\|u(t)\|^2_{\dot{H}^1_c} -\frac{1}{\alpha^\star+2} \|u(t)\|^{\alpha^\star+2}_{L^{\alpha^\star+2}} \\
&\leq \frac{1}{2}\|u(t)\|^2_{\dot{H}^1_c} -\frac{[C_{\text{SE}}(c)]^{\alpha^\star+2}}{\alpha^\star+2} \|u(t)\|^{\alpha^\star+2}_{\dot{H}^1_c} =: g(\|u(t)\|_{\dot{H}^1_c}),
\end{align*}
where 
\begin{align}
g(y)=\frac{1}{2}y^2 -\frac{[C_{\text{SE}}(c)]^{\alpha^\star+2}}{\alpha^\star+2} y^{\alpha^\star+2}. \label{define g}
\end{align}
We have from $(\ref{relation sharp sobolev embedding constant})$ that
\[
g(\|W_{\cbar}\|_{\dot{H}^1_{\cbar}}) = E_{\cbar}(W_{\cbar}).
\]
By the conservation of energy and the assumption $E_c(u_0)<E_{\cbar}(W_{\cbar})$, 
\[
g(\|u(t)\|_{\dot{H}^1_c}) \leq E_c(u(t)) = E_c(u_0)<E_{\cbar}(W_{\cbar}).
\]
We thus have from the assumption $\|u_0\|_{\dot{H}^1_c}> \|W_{\cbar}\|_{\dot{H}^1_{\cbar}}$ and the continuity argument that
\begin{align}
\|u(t)\|_{\dot{H}^1_c}> \|W_{\cbar}\|_{\dot{H}^1_{\cbar}}, \label{blowup energy-critical estimate}
\end{align}
for any $t$ as long as the solution exists. We next improve $(\ref{blowup energy-critical estimate})$ as follows. Pick $\delta>0$ small enough so that 
\begin{align}
E_c(u_0) \leq (1-\delta) E_{\cbar}(W_{\cbar}). \label{refined blowup energy-critical estimate condition}
\end{align} 
This implies 
\begin{align}
g(\|u(t)\|_{\dot{H}^1_c}) \leq (1-\delta) E_{\cbar}(W_{\cbar}). \label{refined blowup energy-critical estimate g}
\end{align}
Using $(\ref{define g})$ and $(\ref{relation sharp sobolev embedding constant})$, we have from $(\ref{refined blowup energy-critical estimate g})$ that
\[
\frac{d}{2} \Big(\frac{\|u(t)\|_{\dot{H}^1_c}}{\|W_{\cbar}\|_{\dot{H}^1_{\cbar}}} \Big)^2 - \frac{d-2}{2} \Big( \frac{\|u(t)\|_{\dot{H}^1_c}}{\|W_{\cbar}\|_{\dot{H}^1_{\cbar}}} \Big)^{\alpha^\star+2} \leq 1-\delta.
\]
The continuity argument shows that there exists $\delta'>0$ depending on $\delta$ so that
\begin{align}
\frac{\|u(t)\|_{\dot{H}^1_c}}{\|W_{\cbar}\|_{\dot{H}^1_{\cbar}}} \geq 1+\delta' \quad \text{or}\quad \|u(t)\|_{\dot{H}^1_c} \geq (1+\delta') \|W_{\cbar}\|_{\dot{H}^1_{\cbar}}. \label{refined blowup energy-critical estimate}
\end{align}
\indent We also have for $\ep>0$ small enough,
\begin{align}
8\|u(t)\|^2_{\dot{H}^1_c} -\frac{4d\alpha^\star}{\alpha^\star+2} \|u(t)\|^{\alpha^\star+2}_{L^{\alpha^\star+2}} + \ep \|u(t)\|^2_{\dot{H}^1_c} \leq -c<0, \label{refined virial estimate energy-critical}
\end{align}
for any $t$ in the existence time. Indeed, 
\[
\text{LHS}(\ref{refined virial estimate energy-critical})= 4d\alpha^\star E_c(u(t)) + (8+\ep-2d\alpha^\star) \|u(t)\|^2_{\dot{H}^1_c}.
\] 
The conservations of mass and energy, $(\ref{refined blowup energy-critical estimate condition}), (\ref{refined blowup energy-critical estimate}), (\ref{property W_c 1})$ and $(\ref{property W_c 2})$ then yield
\begin{align*}
\text{LHS}(\ref{refined virial estimate energy-critical}) &\leq 4d\alpha^\star (1-\delta) E_{\cbar}(W_{\cbar}) + (8+\ep -2d\alpha^\star) (1+\delta')^2 \|W_{\cbar}\|^2_{\dot{H}^1_{\cbar}} \\
&= \frac{16}{d-2} (1-\delta) \|W_{\cbar}\|^2_{\dot{H}^1_{\cbar}} + \Big(-\frac{16}{d-2}+\ep\Big) (1+\delta')^2 \|W_{\cbar}\|^2_{\dot{H}^1_{\cbar}} \\
&= \|W_{\cbar}\|^2_{\dot{H}^1_{\cbar}} \Big[ \frac{16}{d-2}(1-\delta - (1+\delta')^2) + \ep (1+\delta')^2 \Big].
\end{align*}
By taking $\ep>0$ small enough, we prove $(\ref{refined virial estimate energy-critical})$. \newline
\indent Let us consider the case $xu_0 \in L^2$ satisfying $E_c(u_0)<E_{\cbar}(W_{\cbar})$ and $\|u_0\|_{\dot{H}^1_c}> \|W_{\cbar}\|_{\dot{H}^1_{\cbar}}$. By the standard virial identity $(\ref{global virial identity})$ and $(\ref{refined virial estimate energy-critical})$,
\[
\frac{d^2}{dt^2} \|x u(t)\|^2_{L^2} = 8 \|u(t)\|^2_{\dot{H}^1_c} -\frac{4d\alpha^\star}{\alpha^\star+2} \|u(t)\|^{\alpha^\star+2}_{L^{\alpha^\star+2}} \leq -c <0. 
\]
This shows that the solution blows up in finite time. \newline
\indent We now consider the case $u_0$ is radial, and satisfies $E_c(u_0)<E_{\cbar}(W_{\cbar})$ and $\|u_0\|_{\dot{H}^1_c}> \|W_{\cbar}\|_{\dot{H}^1_{\cbar}}$. Using the localized virial estimate $(\ref{localized virial identity})$, we have
\[
\frac{d^2}{dt^2}V_{\varphi_R} (t)\leq 8\|u(t)\|^2_{\dot{H}^1_c} - \frac{4d\alpha^\star}{\alpha^\star+2} \|u(t)\|^{\alpha^\star+2}_{L^{\alpha^\star+2}} + O \Big( R^{-2} + R^{-\frac{(d-1)\alpha^\star}{2}} \|u(t)\|^{\frac{\alpha^\star}{2}}_{\dot{H}^1_c} \Big).
\]
Using the fact $\frac{\alpha^\star}{2}=\frac{2}{d-2} \leq 2$, the uniform bound $(\ref{blowup energy-critical estimate})$ and $(\ref{refined virial estimate energy-critical})$, we see that for $R>1$ large enough,
\[
\frac{d^2}{dt^2}V_{\varphi_R} (t)\leq -c/2<0.
\]
Therefore, the solution must blow up in finite time. \newline
\indent The blowup of Theorem $\ref{theorem blowup NLS energy-critical inverse square radial}$ follows by the same argument as above and $(\ref{property W_c 1 radial})-(\ref{relation sharp sobolev embedding constant radial})$.
\section*{Acknowledgments}
The author would like to express his deep thanks to his wife-Uyen Cong for her encouragement and support. He would like to thank his supervisor Prof. Jean-Marc Bouclet for the kind guidance and constant encouragement. He also would like to thank the reviewer for his/her helpful comments and suggestions. 


\end{document}